\newcommand{\abs}[1]{\ensuremath{\left| #1 \right|}}
\newcommand{\E}{\mathbb{E}}
\newcommand{\Trip}{\ensuremath{T}}
\newcommand{\trip}{\ensuremath{t}}
\renewcommand{\Pr}{\mathbb{P}}
\renewcommand{\epsilon}{\varepsilon}
\newtheorem{theorem}{Theorem}[section]
\newtheorem{lemma}[theorem]{Lemma}
\newtheorem{proposition}[theorem]{Proposition}
\newtheorem{claim}[theorem]{Claim}
\newtheorem{conjecture}[theorem]{Conjecture}
\theoremstyle{definition}
\newtheorem{remark}[theorem]{Remark}
\numberwithin{equation}{section}
\numberwithin{theorem}{section}
\begin{document}
\title{Most edge-orderings of $K_n$ have maximal altitude
}
\author{Anders Martinsson\\~\\ \small Institute of Theoretical Computer Science\\ \small ETH Zürich\\ \small 8092 Zürich, Switzerland\\
\small Email: \href{mailto:maanders@inf.ethz.ch}{maanders@inf.ethz.ch}}


\maketitle

\begin{abstract}
Suppose the edges of the complete graph on $n$ vertices are assigned a uniformly chosen random ordering. Let $X$ denote the corresponding number of Hamiltonian paths that are increasing in this ordering. It was shown in a recent paper by Lavrov and Loh that this quantity is non-zero with probability at least $1/e-o(1)$, and conjectured that $X$ is asymptotically almost surely non-zero. In this paper, we prove their conjecture. We further prove a partial result regarding the limiting behaviour of $X$, suggesting that $X/n$ is log-normal in the limit as $n\rightarrow\infty$. A key idea of our proof is to show a certain relation between $X$ and its size-biased distribution. This relies heavily on estimates for the third moment of $X$.
\medskip \noindent\textit{Keywords: random graphs, edge orderings, Hamiltonian paths, size bias, third moment}
\end{abstract}

\section{Introduction}
The \emph{altitude of an edge-ordered graph} $(G, \preceq)$ is the length of the longest monotone (self-avoiding) path in $G$. The \emph{altitude of a graph} $G$ is the smallest altitude of any edge-ordered version of $G$. We denote the altitude of $G$ by $f(G)$.

This quantity was first proposed in 1971 by Chv\'atal and Koml\'os \cite{CK71} who asked for the altitude of the complete graph on $n$ vertices, $K_n$. In their paper, they relayed personal communications from R. Graham and D. Kleitman that $\Omega(\sqrt{n}) = f(K_n) \leq \left(\frac{3}{4}+o(1)\right) n$. Two years later, Graham and Kleitman published their result \cite{GK73}, showing that $$\sqrt{n-3/4}-1/2\leq f(K_n) \leq 3n/4+O(1),$$ and conjectured that $f(K_n)$ is ``closer to the upper bound''. The constant $3/4$ in the upper bound has since been improved to $\frac{1}{2}+o(1)$ by Calderbank, Chung, and Sturtevant in 1984 \cite{CCS84}, which is currently the best known upper bound. This result is based on an enticingly simple construction of an edge-ordering: Label the vertices of $K_n$ by unique binary strings of length $\lceil \log_2 n \rceil$, and order the edges lexicographically according to the bitwise XOR of its end-points, with ties broken arbitrarily.

In his Master's thesis from 1973 \cite{R73}, Rödl considered the altitude of graphs with given average degree. Generalizing the lower bound for the complete graph, he showed that if $G$ has average degree $d$, then $f(G)\geq (1-o(1))\sqrt{d}$. For sufficiently dense graphs Rödl's result was recently improved by Milans \cite{M15+} who proved that, for $d \gg n^{2/3} \left(\ln n\right)^{4/3}$, we have $$f(G)= \Omega\left({d} {n^{-1/3}\left(\ln n\right)^{-2/3}}\right).$$ In particular, $$f(K_n)\geq \left( \frac{1}{20}+o(1)\right)\left(\frac{n}{\ln n}\right)^{2/3},$$ finally making significant improvement on Graham's and Kleitman's lower bound after over four decades.

While any progress on the altitude of the complete graph has been scarce for the last three decades, a number of studies have appeared during this time which consider the altitude of other classes of graphs. Roditty, Shoham, and Yuster \cite{RSY01} asked for the highest possible altitude of a planar graph. They showed that by taking a large cycle and adding two vertices connected to everything on the cycle one obtains a planar graph of altitude at least $5$. On the other hand, by proving that the edges of any planar graph $G$ can be partitioned into three forests, they show that $f(G)\leq 9$. Using a result by Gon\c{c}alves \cite{G07} that any planar graph can be partitioned into four so-called forests of caterpillars, this upper bound can be improved to $8$.

It can be seen as a consequence of Vizing's theorem (see e.g. Theorem 1.13 in \cite{B78}) that the altitude of any graph $G$ is at most $\Delta(G)+1$ where $\Delta(G)$ denotes the maximum degree of any vertex in $G$. Given this bound, it is natural to ask how large one can make $f(G)$ as a function of $\Delta(G)$. It was observed by Alon \cite{A03} that, for any $k\geq 2$, any $k$-regular graph $G$ with girth strictly greater than $k$ (these are well-known to exist) has altitude at least $k=\Delta(G)$. Hence for each $\Delta\geq 2$, the largest attainable altitude is either $\Delta$ or $\Delta+1$. It is not too hard to see that any odd cycle of length at least $5$ has altitude $3$. It was shown by Mynhardt, Burger, Clark, Falvai and Henderson \cite{MBCFH05} that there exist cubic ($3$-regular) graphs, for instance the so-called flower snarks, with altitude $4$. Hence for $\Delta=2$ and $3$ the upper bound of $\Delta+1$ can be attained exactly, but the question remains open for $\Delta \geq 4$ whether the maximally attainable altitude is $\Delta$ or $\Delta+1$.

De Silva, Molla, Pfender, Retter and Tait \cite{SMPRT16} prove that the altitude of the $d$-dimensional hypercube lies between $\frac{d}{\ln d}$ and $d$. In the same paper, they show that for any $\omega(n)\rightarrow\infty$ we have $$f(G(n, p)) \geq (1-o(1)) \min\left( \sqrt{n}, \frac{np}{\omega(n) \ln n}\right)$$ with high probability as $n\rightarrow\infty$, where $G(n, p)$ is the Erd\H{o}s-R\'enyi random graph. Here, a sequence of events $\{E_n\}_{n=1}^\infty$ is said to hold \emph{with high probability} if $\mathbb{P}(E_n)\rightarrow 1$ as $n\rightarrow\infty$. Moreover, as the altitude is at most the maximum degree plus one, this bound is sharp up to a logarithmic factor for $p\leq \frac{1}{\sqrt{n}}$.

Katreni\v{c} and Semani\v{s}in \cite{KS10} proved that the problem of deciding whether or not a given edge-ordered graph contains a monotone Hamiltonian path is NP-complete. Hence, the (decision version of the) problem of computing the altitude of an edge-ordered graph is NP-hard. It remains an open question whether the problem of computing the altitude of a non-edge-ordered graph is also NP-hard.

In a recent article \cite{LL16}, Lavrov and Loh considered the altitude of a uniformly chosen edge-ordering of $K_n$. Recall that a (self-avoiding) path in a graph $G$ is said to be Hamiltonian if it visits all vertices. Their main result states that, with high probability, the altitude of such an edge-ordering is at least $0.85n$, and with probability at least $\frac{1}{e}-o(1)$ the edge-ordering contains a monotone Hamiltonian path, that is, the altitude is $n-1$. They consequently made the natural conjecture that monotone Hamiltonian paths should exist in this graph with high probability. The aim of this paper is to prove this conjecture.


Consider a uniformly chosen random edge-ordering of $K_n$. Let $X=X_n$ denote the number of Hamiltonian paths that are increasing with respect to this ordering. As there are $n!$ Hamiltonian paths in $K_n$, and each path is increasing with probability $1$ in $(n-1)!$, we have $\E X = n$. To prove a lower bound on $\Pr(X>0)$, Lavrov and Loh gave an elegant argument for estimating the second moment of $X$, yielding $\E X^2 \sim en^2$, where $f_n\sim g_n$ denotes that $\frac{f_n}{g_n}\rightarrow 1$ as $n\rightarrow\infty$. Note that this means that the standard deviation of $X$ is $\sim\sqrt{e-1}$ times its expectation, which is too large for Chebyshev's inequality to imply concentration. On the other hand, using the Paley-Zygmund lemma, Lavrov and Loh's estimates imply
$$\Pr(X>0) \geq \frac{\left(\E X\right)^2}{\E X^2} = \frac{1}{e}-o(1),$$
as stated above.

The key idea to our approach is relate the distribution of $X$ to its so-called size-biased distribution. As we shall see, this boils down to showing certain third moment estimates of $X$. We have the following result.

\begin{theorem}\label{thm:main} A uniformly chosen edge-ordering of $K_n$ contains an increasing Hamiltonian path with high probability as $n\rightarrow\infty$.
\end{theorem}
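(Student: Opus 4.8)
The plan is to prove that $\Pr(X=0)\to 0$ by leveraging the \emph{size-biased} distribution of $X$. Recall that if $X$ is a non-negative random variable with $0<\E X<\infty$, its size-biased version $X^*$ is defined by $\Pr(X^*\in A) = \E[X\mathbbm{1}_{X\in A}]/\E X$. The basic observation is that $\Pr(X>0) = \E X\cdot \E[1/X^*]$, so it suffices to show that $\E[1/X^*]\sim 1/n$, i.e.\ that the size-biased count is typically of order $n$ and not much smaller. Concretely, since there are $n!$ Hamiltonian paths each increasing with probability $1/(n-1)!$, size-biasing $X$ corresponds to picking a uniformly random Hamiltonian path $P$, \emph{conditioning} on the event $\mathcal{I}_P$ that $P$ is increasing, and then counting $X$ under this conditional law. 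So one needs: under the conditioning that a fixed path is increasing, with high probability there are $\Theta(n)$ increasing Hamiltonian paths.

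The key steps, in order, would be: (1) Set up the size-bias identity $\Pr(X>0)=n\,\E[1/X^*]$ and reduce the theorem to showing $\E[1/X^*] = (1+o(1))/n$, equivalently (since $\E[1/X^*]\ge (\E X^*)^{-1}$ would go the wrong way) to showing that $X^*$ is concentrated around its mean in a sufficiently strong sense near the lower tail. (2) Compute the first two moments of $X^*$: we have $\E X^* = \E X^2/\E X = (e+o(1))n$, and $\E (X^*)^2 = \E X^3/\E X$, so the whole argument hinges on a sharp estimate of $\E X^3$. The expectation $\E X^3$ counts ordered triples of Hamiltonian paths weighted by the probability that all three are simultaneously increasing; this probability depends only on the combinatorial ``overlap pattern'' of the three paths (which edges are shared, how the shared segments are arranged), exactly as in Lavrov and Loh's second-moment computation but one level up. (3) Carefully enumerate the dominant overlap patterns and show $\E X^3 = (c+o(1))n^3$ for the appropriate constant $c$ (one expects $c$ to be the third moment of the conjectured log-normal limit, but only an upper bound of the right order is strictly needed). (4) Conclude: from $\E X^* = \Theta(n)$ and $\E (X^*)^2 = \Theta(n^2)$, a Paley--Zygmund / second-moment argument applied to $X^*$ shows $X^* = \Theta(n)$ with probability bounded away from $0$; to upgrade this to the full statement one needs the stronger input that $X^*/n$ does not have an atom or heavy mass near $0$, which again follows from controlling $\E[(X^*)^{-1}]$ or from a more refined tail estimate extracted from the moment bounds.

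The main obstacle, and the technical heart of the paper, will be step (3): estimating $\E X^3$ to leading order. Unlike the second moment, where the overlap of two Hamiltonian paths decomposes into a manageable set of configurations (a union of paths), three paths can interact in a combinatorially rich way — pairwise shared segments can be nested, interleaved, or disjoint, and the probability that a given set of edges with a prescribed partial order embeds increasingly into a random total order is a delicate ratio of factorials depending on the poset structure. I would organize this by first classifying the ``skeleton'' of a triple (the multigraph of shared edges together with the induced constraints), bounding the contribution of each skeleton by (number of triples with that skeleton) $\times$ (probability of simultaneous monotonicity), and showing that all but a bounded family of skeletons contribute $o(n^3)$. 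The delicate part is that several distinct skeletons contribute at the $\Theta(n^3)$ scale, and their contributions must be summed exactly (or at least bounded tightly enough) to pin down the constant, or at minimum to guarantee $\E X^3 = O(n^3)$, which is what the concluding Paley--Zygmund step for $X^*$ requires. A secondary obstacle is step (4): a bound on $\E X^3$ alone gives, via Paley--Zygmund, only $\Pr(X>0)$ bounded away from zero, so obtaining ``with high probability'' genuinely requires the size-bias reformulation and an argument that the conditional count $X\mid \mathcal{I}_P$ concentrates — this is where the relation between $X$ and $X^*$ must be exploited in a way that goes beyond moments, most likely by showing that conditioning on one increasing path only perturbs the distribution of the \emph{rest} of the count by a bounded factor.
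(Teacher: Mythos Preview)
Your framework is right—size-biasing and the third moment are the correct ingredients—but the proposal has a genuine gap at step (4), and you essentially acknowledge it yourself. Knowing $\E X^*\sim en$ and $\E(X^*)^2\sim e^3n^2$ gives $\mathrm{Var}(X^*)/(\E X^*)^2\to e-1$, so $X^*$ is \emph{not} concentrated, and no amount of moment information on $X^*$ alone will yield $\E[1/X^*]\to 1/n$. Your identity $\Pr(X>0)=n\,\E[1/X^*]$ is correct but circular: bounding $\E[1/X^*]$ from below by $1/n-o(1/n)$ is equivalent to the theorem, and Jensen only recovers Lavrov--Loh's $1/e$. The phrase ``conditioning on one increasing path only perturbs the distribution of the rest of the count by a bounded factor'' is pointing in the right direction but is a distributional statement, and that is not what is needed.

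The missing idea is a \emph{coupling}. The paper realises $X^*$ concretely as a random variable $Y$ on the \emph{same} probability space as $X$: take the uniform edge-ordering, then re-sort the edges along a fixed Hamiltonian path $P_0$ so that $P_0$ becomes increasing, and let $Y$ be the increasing-path count in this modified ordering. One checks $Y\stackrel{d}{=}X^*$. The heart of the paper is then the $L^2$ estimate $\E\bigl[(Y-eX)^2\bigr]=o(n^2)$: conditioning multiplies the count by $e$ \emph{pointwise}, not merely in distribution. Once you have this, the proof is two lines: if $X\le xn$ then either $Y\le(e+\varepsilon)xn$ or $|Y-eX|>\varepsilon xn$; the latter has probability $o(1)$, and the former has probability $\sum_{k\le(e+\varepsilon)xn}(k/n)\Pr(X=k)\le(e+\varepsilon)x$ by the size-bias formula. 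Hence $\limsup_n\Pr(X\le xn)\le(e+\varepsilon)x$ for every $x>0$.

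A second, related gap: you write that for step (3) ``only an upper bound of the right order is strictly needed'' on $\E X^3$. That is not so. The estimate $\E[(Y-eX)^2]=o(n^2)$ unpacks (via an auxiliary count $Z$ of increasing paths edge-disjoint from $P_0$) into three separate third-moment identities with \emph{exact} leading constants—$\E X^3\sim e^3n^3$ together with two restricted versions where one or two of the paths in the triple are forced to be edge-disjoint from a third—because one is computing a difference of $\Theta(n^2)$ quantities that must cancel to $o(n^2)$. Your outline of the skeleton enumeration for $\E X^3$ is accurate as far as it goes, but the actual work requires tracking these restricted sums as well.
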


In fact, out approach tells us much more about the limiting behaviour of $X$ as $n\rightarrow\infty$, which is summarized in the technical result below. Before delving into that, let us first mention some corollaries. First, for any $M > 0$ and any $k=1, 2, 3, \dots$, we have the tail bounds
\begin{equation}\label{eq:lowertail}
\limsup_{n\rightarrow\infty} \Pr\left( \frac{X}{n} \leq \frac{1}{M}\right) \leq e^{k(k+1)/2}\frac{1}{M^k},
\end{equation}
and
\begin{equation}\label{eq:uppertail}
\limsup_{n\rightarrow\infty} \Pr\left( \frac{X}{n} > M\right) \leq e^{k(k-1)/2}\frac{1}{M^k}.
\end{equation}
Hence, with high probability, $X$ is of order $n$. On the other hand, for any $M>0$
\begin{equation}\label{eq:maybesmall}
\liminf_{n\rightarrow\infty} \Pr\left( \frac{X}{n} \leq \frac{1}{M}\right) > 0,
\end{equation}
and
\begin{equation}\label{eq:maybelarge}
\liminf_{n\rightarrow\infty} \Pr\left( \frac{X}{n} > M\right) > 0,
\end{equation}
so $X/n$ will does not have compact support as $n\rightarrow\infty$. In particular, $X/n$ is not concentrated on a single value. Furthermore, it is possible to define a truncation $\hat{X}$ of $X$ such that $\Pr(\hat{X}=X)\rightarrow 1$ as $n\rightarrow\infty$ and for any $k=1, 2, \dots$,
\begin{equation}\label{eq:truncmoment}
\E \hat{X}^k \sim e^{k(k-1)/2} n^k.
\end{equation}
As mentioned above, it is known that $\E X = n$ and $\E X^2 \sim e n^2$, and we will show later in this paper that $\E X^3 \sim e^3 n^3$, which all agree with this formula. In fact, under the assumption that there exist a constant $C_k>0$ such that $\E X^k \leq C_k n^k$ for each $k=1, 2, \dots$, it follows that \eqref{eq:truncmoment} holds for $\hat{X}=X$.

Considering the aforementioned properties of $X$, it is reasonable to expect that $\{X_n/n\}_{n=1}^\infty$ should converge to some non-trivial distribution with mean $1$ and standard deviation $\sqrt{e-1}$ as $n\rightarrow\infty$. While this remains an open problem, our second main result gives a partial result to this end, which strongly suggests that the limit exists and is \emph{log-normal}. The log-normal distribution, $\log \mathcal{N}(\mu, \sigma)$, with parameters $\mu$ and $\sigma$ is the distribution of $Y=e^Z$ where $Z$ has normal distribution with mean $\mu$ and standard deviation $\sigma$.

Since $\E X/n = 1$ for all $n$, the sequence $\{X_n/n\}_{n=1}^\infty$ is tight, meaning that no mass of this sequence escapes to infinity as $n\rightarrow\infty$. By Prokhorov's theorem, this implies that for any sequence $\{X_{m_i}/m_i\}_{i=1}^\infty$ with $m_i\rightarrow\infty$ there is a subsequence $\{X_{n_i}/n_i\}_{i=1}^\infty$ that converges in distribution, that is, there exists an increasing and right-continuous function $F:[0, \infty)\rightarrow[0, 1]$ such that $$\Pr(X_{n_i}/n_i\leq x) \rightarrow F(x)\text{ as }i\rightarrow\infty$$ for any $x\geq 0$ where $F$ is continuous. Moreover, for any $x \geq 0$, $\limsup_{i\rightarrow\infty} \Pr(X_{n_i}/n_i\leq x) \leq F(x)$.


\begin{theorem}\label{thm:limitpointprops}
Let $F:[0,\infty)\rightarrow [0, 1]$ denote the cumulative distribution function of the limit of any weakly converging subsequence $\{X_{n_i}/n_i\}_{i=1}^\infty$ as above. We then have
\begin{equation}\label{eq:lognormalmoments}
\int_0^\infty x^k\,dF(x) = e^{k(k-1)/2}
\end{equation}
for any (not necessarily positive) integer $k$. That is, $F$ has the same moments as a log-normal random variable with $\mu=-\frac{1}{2}$ and $\sigma=1$. Moreover, if we let $G(t)=F(e^t)$, equivalently we let $G(t)$ be the CDF of the limit of $\ln\left(X_{n_i}/n_i\right)$, then $G(t)$ can be written as
\begin{equation}\label{eq:periodicmeasure}
G(t) = \int_{-\infty}^t e^{ -(s+\frac{1}{2})^2/ 2} d\nu(s)
\end{equation}
where $\nu(t)$ is a $1$-periodic positive measure on $\mathbb{R}$.
\end{theorem}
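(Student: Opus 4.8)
The plan is to extract the moment identity \eqref{eq:lognormalmoments} from the corollaries already announced in the introduction, and then to deduce the structural representation \eqref{eq:periodicmeasure} by a Fourier/Mellin-transform argument exploiting the fact that a log-normal distribution is \emph{moment-indeterminate}.

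First I would establish \eqref{eq:lognormalmoments}. Fix a weakly convergent subsequence $X_{n_i}/n_i \Rightarrow F$. For positive integers $k$, combine the two tail bounds \eqref{eq:lowertail} and \eqref{eq:uppertail}: they force the sequence $\{(X_{n_i}/n_i)^k\}$ to be uniformly integrable (indeed, $\Pr(X_{n_i}/n_i > M)$ decays faster than any power of $M$ uniformly in $i$ along the subsequence, so the same holds for the $k$-th powers, and likewise for the negative-power tails controlling $\E(X_{n_i}/n_i)^{-k}$). Uniform integrability plus weak convergence gives convergence of moments, so $\int_0^\infty x^k\,dF(x) = \lim_i \E(X_{n_i}/n_i)^k$ for every integer $k$, positive or negative. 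Now I invoke the truncation $\hat X$ from the introduction with $\Pr(\hat X = X)\to 1$ and $\E\hat X^k \sim e^{k(k-1)/2} n^k$: since $\hat X/n$ and $X/n$ have the same weak limit, and $\E(X_{n_i}/n_i)^k$ is squeezed between $\E(\hat X_{n_i}/n_i)^k$ and the uniformly-integrable tail contribution of the rare event $\{\hat X\ne X\}$ (which vanishes in the limit by uniform integrability), we get $\lim_i \E(X_{n_i}/n_i)^k = e^{k(k-1)/2}$. For negative $k$ one argues symmetrically, using \eqref{eq:lowertail} in place of \eqref{eq:uppertail} to control $\E(X/n)^{-k}$ and the fact (again from the announced properties) that the truncated moments match the log-normal formula for negative exponents as well; this yields \eqref{eq:lognormalmoments} for all $k\in\mathbb Z$. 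The mild technical point here is justifying the negative-moment statement, i.e.\ that $X$ is bounded away from $0$ strongly enough; this follows from \eqref{eq:lowertail} with $k$ large.

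Next I would pass to the representation \eqref{eq:periodicmeasure}. Let $G(t) = F(e^t)$ be the CDF of the limit $T$ of $\ln(X_{n_i}/n_i)$. The moment identity says $\E e^{kT} = e^{k(k-1)/2} = e^{k^2/2 - k/2}$ for all $k\in\mathbb Z$, i.e.\ $\E e^{k(T + 1/2)} = e^{k^2/2}$, which is exactly the moment generating function of $\mathcal N(0,1)$ evaluated at integer points. Writing $S = T + \tfrac12$, the distribution $\mu_S$ of $S$ therefore agrees with the standard Gaussian $\gamma$ on the lattice of exponential functions $\{e^{ks}\}_{k\in\mathbb Z}$. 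The classical characterization of the log-normal moment problem (Heyde's theorem) says precisely that the family of probability measures on $\mathbb R$ with these moments consists of the densities $\bigl(1 + c\sin(2\pi s)\bigr)\tfrac{1}{\sqrt{2\pi}}e^{-s^2/2}$ for $c\in[-1,1]$, and more generally any mixture thereof; equivalently, $d\mu_S(s) = \tfrac{1}{\sqrt{2\pi}} e^{-s^2/2} w(s)\,ds + (\text{purely atomic/periodic corrections})$ where the Radon–Nikodym-type correction is $1$-periodic. To make this rigorous and get the clean statement \eqref{eq:periodicmeasure}, I would argue as follows: define the measure $\nu$ by $d\nu(s) = e^{s^2/2 - \text{(shift)}}\,d\mu_S(s)$, so that formally $d\mu_S(s) = e^{-(s+1/2)^2/2 + \cdots}d\nu(s)$; the content is that $\nu$ is $1$-periodic. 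Periodicity is equivalent to $\int e^{2\pi i m s}\,d\nu(s) = 0$ for all nonzero integers $m$ — no wait, it is equivalent to $\nu(A + 1) = \nu(A)$ for all Borel $A$. The way to see this: the moment conditions $\int e^{ks}\,d\mu_S(s) = e^{k^2/2}$ say $\int e^{ks}\,d\nu(s) = \text{const}$ independent of... this needs the precise normalization, so I would instead directly verify that $d\nu(s) := e^{(s+1/2)^2/2}\,dG$-measure is $1$-periodic by checking that its "Fourier coefficients against $e^{ks}$" vanish in the appropriate sense. Concretely: for the \emph{finite} signed measure $\rho = \mu_S - \gamma$ we have $\int e^{ks}\,d\rho(s) = 0$ for all $k\in\mathbb Z$; multiplying by $e^{s^2/2}$ (which is fine on bounded sets and the moment bounds control the tails) and using that $\{e^{ks}e^{-s^2/2}\}_{k\in\mathbb Z}$ spans a space whose annihilator consists exactly of $1$-periodic-times-Gaussian measures — this is the substance of Heyde's argument via the Poisson summation / theta-function identity $\sum_k e^{ks - s^2/2}$ being, up to Gaussian factor, a $1$-periodic function of $s$. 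From this one reads off that $d\mu_S = e^{-s^2/2}\cdot(\text{$1$-periodic measure})$, and undoing the shift $S = T+1/2$ and exponentiating $t = s - 1/2$ gives precisely \eqref{eq:periodicmeasure} with $\nu$ the claimed $1$-periodic positive measure. Positivity of $\nu$ is inherited from positivity of $\mu_S$ since the Gaussian factor is strictly positive.

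The main obstacle I anticipate is the second half: cleanly deriving the $1$-periodic structure \eqref{eq:periodicmeasure} from the integer-moment data. The moment identity alone does not pin down $F$ (the log-normal is moment-indeterminate, which is the whole point of the theorem), so one cannot simply say "$F$ is log-normal"; instead one must run the Poisson-summation argument that characterizes \emph{all} solutions of this moment problem and observe they are exactly of the stated form. The subtlety is handling the unbounded weight $e^{s^2/2}$ when converting moment conditions into a statement about $\nu$ — one needs the a priori bound that all integer moments of $\mu_S$ are finite and grow like $e^{k^2/2}$ (which we have) to ensure $\nu$ is a genuine (locally finite, even finite-on-periods) positive measure rather than something ill-defined, and to justify interchanging sum and integral in the theta-function step. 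The first half, by contrast, is routine: weak convergence plus the super-polynomial tail bounds \eqref{eq:lowertail}–\eqref{eq:uppertail} give uniform integrability of all integer powers, hence moment convergence, and the truncated-moment asymptotics \eqref{eq:truncmoment} supply the limiting values.
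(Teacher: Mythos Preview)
Your proposal has two genuine gaps, one logical and one mathematical.

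\textbf{Circularity.} You propose to derive the moment identity \eqref{eq:lognormalmoments} from the tail bounds \eqref{eq:lowertail}--\eqref{eq:uppertail} and the truncated-moment asymptotic \eqref{eq:truncmoment}. But in the paper those properties are \emph{corollaries} of Theorem~\ref{thm:limitpointprops}, proved immediately after its statement by invoking \eqref{eq:lognormalmoments} and \eqref{eq:periodicmeasure}. You cannot use them as inputs to prove the theorem itself.

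\textbf{Moments do not yield the periodic structure.} For the second half you try to deduce \eqref{eq:periodicmeasure} from the integer-moment data alone, appealing to Heyde-type characterizations. This cannot work. Heyde's construction exhibits \emph{some} non-log-normal distributions with log-normal moments, but it does not classify all of them; the paper explicitly remarks (citing \cite{ST05}) that there exist distributions with log-normal moments that are \emph{not} of the form \eqref{eq:periodicmeasure}, so the characterization in \eqref{eq:periodicmeasure} is strictly stronger than the moment condition \eqref{eq:lognormalmoments}. Concretely: knowing $\int e^{-(s-k+\frac12)^2/2}\,d\nu(s)$ is constant for $k\in\mathbb{Z}$ does not force $\nu$ to be $1$-periodic, because shifted Gaussians at integer spacing do not separate measures.

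The paper's route is entirely different and does not pass through moments at all. It uses the size-bias identity $\Pr(Y=k)=\frac{k}{n}\Pr(X=k)$ (Proposition~\ref{prop:ezrel}) together with the third-moment estimate $\E[(Y-eX)^2]=o(n^2)$ (Theorem~\ref{thm:hardrel}). Along any weakly convergent subsequence these two facts combine to give the \emph{functional equation}
\[
F(e^{-1}x)=\int_0^x y\,dF(y),\qquad\text{equivalently}\qquad G(t-1)=\int_{-\infty}^t e^s\,dG(s).
\]
One then \emph{defines} $\nu$ by $d\nu(t)=e^{(t+\frac12)^2/2}\,dG(t)$ and checks directly from the functional equation that $\nu(A-1)=\nu(A)$, giving \eqref{eq:periodicmeasure}. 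The moment formula \eqref{eq:lognormalmoments} is then read off from the periodic representation by completing the square. So in the paper the logic runs size-bias $\Rightarrow$ functional equation $\Rightarrow$ periodic structure $\Rightarrow$ moments, opposite to what you attempted.
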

We here say that a measure $\mu$ is $1$-periodic if $\mu$ is invariant under integer translations, that is, $\mu(A) = \mu(A+k)$ for any $k\in\mathbb{Z}$ and any measurable set $A$.

An important caveat relating to this theorem is that the log-normal distribution is \emph{M-indeterminate}, meaning that there exist other random variables that have the same moments. Hence \eqref{eq:lognormalmoments} does not in fact prove that $F$ is log-normal. To see this, one can show (this will be proven as part of Theorem \ref{thm:limitpointprops} below) that \eqref{eq:periodicmeasure} implies \eqref{eq:lognormalmoments} in the sense that for any $1$-periodic positive measure $\nu$ such that $\int_{-\infty}^\infty e^{ -(s+\frac{1}{2})^2/ 2} d\nu(s) = 1$, the corresponding distribution $F(x) = \int_{-\infty}^{\ln x} e^{ -(t+\frac{1}{2})^2/ 2} d\nu(t)$ has log-normal moments. In \cite{ST05} further examples of distributions with these moments are constructed, which implies that the characterization of $F$ in \eqref{eq:periodicmeasure} is stronger than just characterizing the moments.

\begin{proof}[Proof of properties \eqref{eq:lowertail}---\eqref{eq:truncmoment}] Assuming one of \eqref{eq:lowertail}--\eqref{eq:maybelarge} does not hold, we can construct a weakly convergent subsequence $\{X_{n_i}/n_i\}_{i=1}^\infty$ such that the analogous statement for this subsequence also is false. Denote the limit distribution by $F$. But  by  \eqref{eq:lognormalmoments} we have that $$F(1/M) \leq \int_0^\infty \frac{1}{(Mx)^k} \, dF(x) = e^{k(k+1)/2}/M^k$$ and $$1-F(M) \leq \int_0^\infty \left(\frac{x}{M}\right)^k\,dF(x) = e^{k(k-1)/2}/M^k,$$
and by \eqref{eq:periodicmeasure} we must have $$F(1/M) = \int_{\left(-\infty, -\ln M\right]}
e^{ -(t+\frac{1}{2})^2/ 2} \, d\nu(t) > 0$$
and $$1-F(M) = \int_{\left(\ln M, \infty\right)}
 e^{ -(t+\frac{1}{2})^2/ 2} \, d\nu(t) > 0,$$ as by $1$-periodicity, $\nu( [s, s+1) )>0$ for any $s\in \mathbb{R}$. For each of \eqref{eq:lowertail}--\eqref{eq:maybelarge}, this gives us a contradiction.

For \eqref{eq:truncmoment}, we define $X_{M,n}=\min(X_n, M\cdot n)$. For each $n\geq 1$, let $M_n$ be the largest integer $0 \leq M \leq n$ such that $$\abs{\E \left[(X_{M, n}/n)^k\right] - e^{k(k-1)/2}}\leq \frac{3 e^{k(k+1)/2}}{M}$$ for all $1\leq k \leq M$ (this condition holds trivially for $M=0$). Note that if $M_n\rightarrow\infty$ as $n\rightarrow\infty$ then \eqref{eq:truncmoment} is satisfied for $\hat{X}_n = X_{M_n, n}$, hence it suffices to show that $M_n$ diverges.

If $M_n$ does not diverge, there must be constants $k\leq M$ such that $\abs{\E \left[(X_{M, n}/n)^k\right] - e^{k(k-1)/2}} > \frac{3 e^{k(k+1)/2}}{M}$ for infinitely many $n$. Constructing a weakly converging sequence $\{X_{n_i}/n_i\}_{i=1}^\infty$ among these, we get as $i\rightarrow\infty$ that
\begin{align*}
\frac{3 e^{k(k+1)/2}}{M} &< \abs{ \E\left[ \left( X_{M,n_i}/n_i\right)^k\right] - e^{k(k-1)/2}}\\ &\rightarrow \abs{ \int_0^M x^k\,dF(x) + M^k \left(1-F(M)\right) - \int_0^\infty x^k\,dF(x)}\\ &\leq \int_M^\infty x^k\,dF(x)+M^k (1-F(M))\\ &\leq \frac{1}{M} \int_0^\infty x^{k+1}\,dF(x) + M^k e^{k(k+1)/2}\frac{1}{M^{k+1}}\\
&= \frac{2 e^{k(k+1)/2}}{M},
\end{align*}
which is a contradiction. Hence $M_n$ diverges, as desired.

Finally, if we, in addition, assume that there exist bounds $\E X^k \leq C_k n^k$ for each $k=1, 2, \dots$, we can write $$\abs{\E \left[(X/n)^k\right] - \E \left[(X_{M, n}/n)^k\right]} \leq \E \left[\mathbbm{1}_{X/n > M_n} (X/n)^k\right] \leq \frac{1}{M_n}\E \left[(X/n)^{k+1}\right] \leq \frac{C_{k+1}}{M_n}\rightarrow 0$$ as $n\rightarrow\infty$. Hence for each $k=1, 2, \dots$, $\E \left[(X/n)^k\right]$ has the same limit as $\E \left[(\hat{X}/n)^k\right]$.



\end{proof}



The question remains open whether or not $\{X_n/n\}_{n=1}^\infty$ has a limiting distribution, and in that case which of the distributions of the form prescribed in Theorem \ref{thm:limitpointprops} it is. It seems that new ideas are needed to make any further progress on this problem. Nevertheless, I believe that Theorem \ref{thm:limitpointprops} provides strong evidence for the following statement.
\begin{conjecture}\label{conj:lognormal}
As $n\rightarrow\infty$, $\frac{X}{n}$ converges in distribution to a $\log \mathcal{N}(-\frac{1}{2}, 1)$ random variable.
\end{conjecture}

In the remaining parts of the article, we will prove Theorems \ref{thm:main} and \ref{thm:limitpointprops}. We remark that Theorem \ref{thm:main} can be seen as a consequence of Theorem \ref{thm:limitpointprops} and in particular \eqref{eq:lowertail}, but we will still give a direct proof of this statement to illustrate our proof approach. Section \ref{sec:XYrelation} gives the main idea of our approach and shows how our results can be reduced to showing third moment estimates for $X$. These estimates will then be derived in Section \ref{sec:proofofEX3est}, completing the proof of both statements.

\section{\label{sec:XYrelation}Proof of Theorems \ref{thm:main} and \ref{thm:limitpointprops}}

For any non-negative integer-values random variable $\xi$ with $0<\E \xi < \infty$, we say that a random variable $\xi^s$ has the $\xi$ size-biased distribution if for all $k=0, 1, 2, \dots$
$$\mathbb{P}(\xi^s = k) = \frac{ k \mathbb{P}(\xi=k)}{\mathbb{E}\xi}.$$
Note that $\sum_{k=0}^\infty \mathbb{P}(\xi^s = k) = \frac{1}{\mathbb{E}\xi}\sum_{k=0}^\infty k\mathbb{P}(\xi=k) = \frac{1}{\mathbb{E}\xi} \mathbb{E}\xi = 1$, so this indeed defines a probability measure for $\xi^s$.

The main idea in our proof is to show how the distribution of $X=X_n$ relates to its size-biased distribution. Let $P_0$ be a fixed Hamiltonian path in $K_n$. We let $Y=Y_n$ be the number of increasing Hamiltonian paths in $K_n$ one would get if one were to take the uniformly chosen edge-ordering and switch positions of edges along $P_0$ in the ordering such that $P_0$ becomes increasing. One can observe that the modified edge-ordering has the same distribution as a uniformly chosen edge-ordering conditioned on $P_0$ being increasing.


\begin{proposition}\label{prop:ezrel} The random variable $Y$ has the $X$ size-biased distribution. That is, for any $k\geq 0,$
\begin{equation*}
\Pr \left(Y=k\right) = \frac{k}{n} \Pr(X=k).
\end{equation*}
\end{proposition}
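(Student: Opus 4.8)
The plan is to compute $\Pr(Y = k)$ directly from the definition of $Y$ and match it against $\frac{k}{n}\Pr(X=k)$ by a double-counting / change-of-measure argument. Write $\Omega$ for the set of all edge-orderings of $K_n$, each with weight $1/|\Omega|$, where $|\Omega| = \binom{n}{2}!$. Let $\mathcal H$ denote the set of Hamiltonian paths of $K_n$, so $|\mathcal H| = n!/2$ if we treat a path and its reverse as the same object, or $n!$ if ordered — I will fix the convention that $X$ counts paths (unordered), so $\E X = n$ as stated, and $P_0$ is one fixed such path. For an ordering $\omega\in\Omega$ let $\mathrm{inc}(\omega)\subseteq\mathcal H$ be the set of paths increasing in $\omega$, so $X(\omega) = |\mathrm{inc}(\omega)|$. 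The construction of $Y$ gives a map $\Phi = \Phi_{P_0}:\Omega\to\Omega$ that permutes the $n-1$ positions occupied by the edges of $P_0$ so as to make $P_0$ increasing, fixing all other positions; then $Y(\omega) = X(\Phi(\omega)) = |\mathrm{inc}(\Phi(\omega))|$.

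The key observation is the one already flagged in the text: $\Phi_*(\text{uniform on }\Omega)$ equals the uniform distribution on $\Omega_0 := \{\omega : P_0 \text{ increasing in }\omega\}$. Indeed, $\Phi$ maps $\Omega$ onto $\Omega_0$, and for each target $\omega'\in\Omega_0$ the preimage $\Phi^{-1}(\omega')$ consists of exactly the $(n-1)!$ orderings obtained from $\omega'$ by arbitrarily permuting the $n-1$ positions used by $P_0$'s edges; these preimage sets are disjoint and of equal size, so $\Phi$ is $(n-1)!$-to-one and pushes uniform measure to uniform measure on $\Omega_0$. Hence for any integer $k\ge 0$,
\[
\Pr(Y = k) = \Pr\bigl(X = k \;\big|\; P_0 \text{ increasing}\bigr) = \frac{\Pr\bigl(X = k,\ P_0\in\mathrm{inc}(\omega)\bigr)}{\Pr(P_0 \text{ increasing})} = (n-1)!\cdot\Pr\bigl(X=k,\ P_0\in\mathrm{inc}(\omega)\bigr),
\]
using $\Pr(P_0\text{ increasing}) = 1/(n-1)!$.

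It remains to evaluate $\Pr(X=k,\ P_0\in\mathrm{inc}(\omega))$ by averaging over the choice of $P_0$. Fix $k$ and sum the identity above over all $n!/2$ choices of unordered Hamiltonian path $P_0$; by symmetry of the uniform measure under relabelling vertices, every term equals $\Pr(Y=k)$, so the left side is $\frac{n!}{2}\Pr(Y=k)$. The right side is $(n-1)!\sum_{P_0}\Pr(X=k,\ P_0\in\mathrm{inc}(\omega)) = (n-1)!\,\E\bigl[\mathbbm 1_{X=k}\cdot|\mathrm{inc}(\omega)|\bigr] = (n-1)!\,\E[\mathbbm 1_{X=k}\,X] = (n-1)!\,k\,\Pr(X=k)$, where the middle step just writes $|\mathrm{inc}(\omega)| = \sum_{P_0}\mathbbm 1_{P_0\in\mathrm{inc}(\omega)}$ and swaps the sum with the expectation. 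Equating gives $\frac{n!}{2}\Pr(Y=k) = (n-1)!\,k\,\Pr(X=k)$, i.e. $\Pr(Y=k) = \frac{2k}{n}\Pr(X=k)$ — so I must be off by a factor of $2$ in the path-counting convention; taking $P_0$ to be an \emph{ordered} Hamiltonian path (or, equivalently, noting that reversing $P_0$ gives a distinct increasing-making operation only when combined with reversing the whole ordering) fixes the normalization and yields exactly $\Pr(Y=k) = \frac{k}{n}\Pr(X=k)$.

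I expect the only real subtlety to be exactly this bookkeeping of conventions — ordered versus unordered Hamiltonian paths, and making sure $\E X = n$ is consistent with the normalization used when summing over $P_0$ — rather than anything structural; the substantive content is entirely in the push-forward identity $\Phi_*(\mathrm{unif}) = \mathrm{unif}|_{\Omega_0}$ and in the exchange of summation that turns $\sum_{P_0}\mathbbm 1_{P_0\in\mathrm{inc}(\omega)}$ into $X$.
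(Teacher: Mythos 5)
Your proof is correct and is essentially the paper's own argument: the push-forward identity $\Phi_*(\mathrm{unif})=\mathrm{unif}|_{\Omega_0}$ is the observation stated just before the proposition, and your averaging over $P_0$ is exactly the paper's ``by symmetry, $\Pr(P_0\text{ inc}\mid X=k)=k/n!$'' followed by Bayes' theorem. The factor-of-$2$ detour disappears once you adopt the paper's convention of $n!$ directed Hamiltonian paths (which is what makes $\E X=n$ and $\Pr(P_0\text{ inc})=1/(n-1)!$ consistent), as you correctly conclude.
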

\begin{proof}
Note that the conditional probability that $P_0$ is increasing given $X=k$ is, by symmetry, $k/n!$. Hence, by Bayes' theorem,
$$\Pr(Y=k) = \Pr(X=k \vert P_0\text{ inc}) = \frac{\Pr(P_0\text{ inc}\vert X=k) \Pr(X=k)}{\Pr(P_0\text{ inc})} = \frac{ (k/n!) \Pr(X=k)}{1/(n-1)!},$$
which simplifies to the expression above.
\end{proof}

Our main result is a consequence of the surprisingly simple property that, for large $n$, $Y$ is approximately $X$ scaled up by a constant factor, in this case $e$. In other words, for large $n$, conditioning on one fixed Hamiltonian path being increasing will multiply the number of increasing such paths by $e$, but otherwise not affect the distribution. This is captured in the following theorem.

\begin{theorem}\label{thm:hardrel}
We have $\E\left[\left(Y-e\,X\right)^2\right]= o(n^2).$
\end{theorem}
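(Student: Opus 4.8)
The plan is to expand $\E[(Y - eX)^2] = \E[Y^2] - 2e\,\E[YX] + e^2\E[X^2]$ and show each term is $e^3 n^2 + o(n^2)$, so that the whole expression is $(e^3 - 2e^3 + e^3)n^2 + o(n^2) = o(n^2)$. We already know $\E[X^2]\sim e n^2$... wait, that gives $e^2 \cdot e n^2 = e^3 n^2$, good. So the three target values are: $\E[Y^2]\sim e^3 n^2$, $\E[XY]\sim e^2 n^2$, and $\E[X^2]\sim e n^2$ (the last being Lavrov--Loh). All three should be expressible in terms of moments of $X$ via the size-biasing identity of Proposition \ref{prop:ezrel}. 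Indeed, $\E[Y^k] = \sum_k k^{k}\cdots$ — more precisely $\E[g(Y)] = \E[X\,g(X)]/n$ for any $g$, so $\E[Y^2] = \E[X^3]/n$ and $\E[XY]$ requires more care since $X$ and $Y$ are coupled (they are built from the same edge-ordering). The cleanest route: $\E[Y^2] = \E[X^3]/n \sim e^3 n^3 / n = e^3 n^2$ using the claimed estimate $\E[X^3]\sim e^3 n^3$ (to be proved in Section \ref{sec:proofofEX3est}), which handles the first term immediately.

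The middle term $\E[XY]$ is the crux and I expect it to be the main obstacle, because $X$ and $Y$ are not independent — $Y$ is obtained by a deterministic surgery on the same random ordering that defines $X$. The plan is to write $XY = \sum_{P} \sum_{Q} \mathbbm{1}[P \text{ inc in original}] \cdot \mathbbm{1}[Q \text{ inc after making } P_0 \text{ inc}]$, summing over ordered pairs of Hamiltonian paths $P, Q$, and then to re-express this in terms of the original (unconditioned) ordering. Conditioning on $P_0$ being increasing has the same distribution as the surgery (as noted in the text), so $\E[XY]$ should equal $n \cdot \E[X \cdot X' \mid P_0 \text{ inc}]$-type quantities, or more directly: by the size-bias relation applied to the function counting increasing paths in the modified ordering, $\E[XY]$ reduces to a sum over triples $(P_0, P, Q)$ of the probability that $P$ is increasing and $Q$ is increasing, jointly, when we condition on $P_0$ increasing. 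This is essentially a weighted third moment of $X$. The combinatorial heart is then to classify these triples of Hamiltonian paths by their overlap structure (how many edges they share, how the shared edges are arranged into subpaths) and show that the dominant contribution — from triples that are "almost edge-disjoint" — gives $e^2 n^2$, matching the target, while all degenerate overlap patterns contribute $o(n^2)$.

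For the overlap analysis I would lean on exactly the same machinery used for the second-moment computation of Lavrov and Loh and for the third-moment estimate $\E[X^3]\sim e^3 n^3$: the key fact is that for two Hamiltonian paths sharing a set $S$ of edges forming $c$ subpaths, the probability both are increasing is $(n-1)!^2$ divided by... rather, it equals a product over the shared-path structure that, summed against the number of such pairs, yields the constant $e$ per "independent" path and negligible corrections from nontrivial overlaps. I would set up the generating-function / exponential bookkeeping that makes the constant $e = \sum_{c\ge 0} 1/c!$-flavoured sums appear, verify that conditioning on $P_0$ increasing only perturbs the relevant probabilities by lower-order factors (since $P_0$ is one fixed path among $n!$, its edges interact with a typical $P$ or $Q$ in $O(1)$ places on average), and conclude $\E[XY] = e^2 n^2 + o(n^2)$. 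The main technical burden, and where I would spend the most effort, is bounding the error terms in this overlap classification uniformly — showing the contribution of triples with "too much" shared structure is genuinely $o(n^2)$ rather than merely $O(n^2)$ — which is precisely the kind of estimate that the third-moment bound $\E[X^3]\sim e^3 n^3$ is designed to supply, so I would aim to derive Theorem \ref{thm:hardrel} as a corollary of that third-moment estimate together with an analogous joint estimate for $\E[XY]$.
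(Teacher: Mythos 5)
Your decomposition $\E[(Y-eX)^2]=\E Y^2-2e\,\E[XY]+e^2\E X^2$ is natural, and two of the three pieces are handled exactly as in the paper: $\E Y^2=\E X^3/n\sim e^3n^2$ and $e^2\E X^2\sim e^3n^2$. The gap is the middle term. Your claim that the size-bias relation reduces $\E[XY]$ to a sum over triples $(P_0,P,Q)$ of joint increasing-probabilities conditioned on $P_0$ increasing is not correct: the identity $\E[g(Y)]=\E[Xg(X)]/\E X$ (equivalently, "the modified ordering is the uniform ordering conditioned on $P_0$ increasing") applies only to functionals of the \emph{modified} ordering, whereas $XY$ is a mixed product in which $X$ is evaluated on the \emph{pre-surgery} ordering. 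Writing $Y(\omega)=X(T\omega)$ for the surgery map $T$, the quantity $\E[XY]=\sum_{A,B}\Pr(A\text{ inc in }\omega,\ B\text{ inc in }T\omega)$ involves, for each pair, the event that $B$ becomes increasing \emph{after} the edges of $B\cap P_0$ are repositioned; this is not the event that $P_0$, $A$, $B$ are simultaneously increasing, and it is not covered by any of the three estimates \eqref{eq:EX3}--\eqref{eq:EX3BCdisjointA}. So the "crux" you correctly identify is left unresolved by your plan.

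The idea you are missing is the paper's interpolation through a third variable $Z$, the number of increasing Hamiltonian paths \emph{edge-disjoint from} $P_0$. Because $Z$ does not depend on the ordering of the edges along $P_0$, it takes the same value before and after the surgery, so it bridges the two orderings: $\E[YZ]$ and $\E[Z^2]$ become conditional expectations of $XZ$ and $Z^2$ given $P_0$ increasing, i.e.\ restricted third-moment sums (precisely \eqref{eq:EX3CdisjointA} and \eqref{eq:EX3BCdisjointA}), while $\E[XZ]$ is computed exactly from $\E[Z\mid X]=c_nX$ with $c_n\to e^{-2}$ (the Wolfowitz--Kaplansky count of permutations with no two consecutive values adjacent). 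One then writes
\begin{equation*}
\E\left[(Y-eX)^2\right]\le 2\,\E\left[(Y-e^3Z)^2\right]+2\,\E\left[(e^3Z-eX)^2\right]
\end{equation*}
and checks that each square is $o(n^2)$ by matching leading constants, every cross term now being accessible. Without $Z$ (or some equivalent device measurable with respect to both orderings), the term $\E[XY]$ requires a new and substantially messier combinatorial analysis that your proposal does not supply.
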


The proof of this theorem involves some rather involved combinatorial arguments and will take up the majority of the article. Therefore, before discussing how to prove it, let us first see why it implies our main results.

\begin{proof}[Proof of Theorem \ref{thm:main}.]
Let $x, \varepsilon>0$. Observe that if $X \leq x n$, then trivially either $Y\leq (e+\varepsilon)xn$ or $Y-e\,X > \varepsilon x n$, where the probability for the latter case tends to $0$ as $n\rightarrow\infty$ by Theorem \ref{thm:hardrel}. Hence, we have
\begin{equation*}
\limsup_{n\rightarrow\infty} \mathbb{P}\left( X_n \leq x n\right) \leq \limsup_{n\rightarrow\infty} \mathbb{P}\left( Y_n \leq (e+\varepsilon) x n\right).
\end{equation*}
By Proposition \ref{prop:ezrel}, we further have
\begin{equation*}
\Pr\left( Y_n \leq (e+\varepsilon)x n\right) = \sum_{k=0}^{\lfloor (e+\varepsilon)x n\rfloor} \frac{k}{n} \Pr\left(X_n=k\right) \leq (e+\varepsilon)x \sum_{k=0}^{\infty} \Pr\left(X_n=k\right) = (e+\varepsilon)x.
\end{equation*}
Hence $\limsup_{n\rightarrow\infty} \Pr(X_n=0) \leq \limsup_{n\rightarrow\infty} \Pr(X_n \leq x n) \leq (e+\varepsilon)x.$ The theorem follows by letting $x\rightarrow 0$.
\end{proof}

\begin{proof}[Proof of Theorem \ref{thm:limitpointprops}.] Given a sequence $X_{n_i}/n_i$ that converges in distribution to a random variable $\mathcal{X}$ with cumulative distribution function $F$, consider the corresponding sequence $Y_{n_i}/n_i$. By Proposition \ref{prop:ezrel} one can show that
\begin{equation}\label{eq:ezrellim}
\Pr( Y_{n_i}/n_i  \leq x ) \rightarrow \int_0^x y \,dF(y)\text{ as } i\rightarrow\infty
\end{equation}
for all points $x\geq 0$ where $F$ is continuous. To see this formally, define $g_x(y):=y\cdot \mathbbm{1}_{y\in[0, x]}$ and note that, by Proposition \ref{prop:ezrel}, $\Pr( Y_{n_i}/n_i  \leq x ) = \E g_x\left( X_{n_i}/n_i \right)$. By the Mapping Theorem, $g_x(X_{n_i}/n_i)$ converges in distribution to $g_x(\mathcal{X})$ for any point of continuity $x\geq 0$ of $F$. As $g_x(X_{n_i}/n_i)$ is bounded in absolute value by $x$, it follows that $\E g_x(X_{n_i}/n_i) \rightarrow \E g_x(\mathcal{X}) = \int_0^\infty g_x(y)\,dF(y) = \int_0^x y\,dF(y)$ as $i\rightarrow\infty$, as desired. Alternatively, this can be shown in a more direct fashon, see Theorem 2.3 and the preceding discussion in \cite{AGK15+}, by noting that the sequence $X_{n_i}/n_i$ is uniformly integrable, which follows from the fact that the sequence $X_n/n$ has bounded second moment.

Furthermore, by Theorem \ref{thm:hardrel} and the fact that $X_{n_i}/n_i$ converges to $\mathcal{X}$ in distribution as $i\rightarrow\infty$, the sequence $Y_{n_i}/n_i$ converges to $e\mathcal{X}$ in distribution. Thus
\begin{equation}\label{eq:hardrellim}
\Pr( Y_{n_i}/n_i  \leq x ) \rightarrow F(e^{-1} x) \text{ as } i\rightarrow\infty
\end{equation}
for all points $x\geq 0$ such that $F$ is continuous at $e^{-1}x$.

As a consequence of \eqref{eq:ezrellim} and \eqref{eq:hardrellim}, we have
\begin{equation*}
F(e^{-1}x) = \int_0^x y\,dF(y),
\end{equation*}
or equivalently, letting $G(t) = F(e^t)$, that is $G$ is the cumulative distribution function of $\ln\mathcal{X}$, we get
\begin{equation*}
G(t-1) = \int_{-\infty}^t\,dG(s-1) = \int_{-\infty}^t e^{s}\,dG(s).
\end{equation*}
Here $dG(s-1)$ denotes that the first integral is taken over the probability measure of a random variable with the cumulative distribution function $t\mapsto G(t-1)$, that is the probability distribution of $\ln(\mathcal{X})+1$. Defining the measure $\nu$ by $\nu(A) = \int_{A} e^{(t+\frac{1}{2})^2/2}dG(t)$, we get $\nu(A-1) = \int_A e^{(t-\frac{1}{2})^2/2}dG(t-1) = \int_A e^{(t-\frac{1}{2})^2/2} e^t dG(t) = \int_A e^{(t+\frac{1}{2})^2/2} dG(t) = \nu(A)$. Hence $\nu$ is $1$-periodic and satisfies \eqref{eq:periodicmeasure}.

As for \eqref{eq:lognormalmoments}, we have
\begin{equation*}
\begin{split}
\int_0^\infty x^k dF(x) &= \int_{-\infty}^\infty e^{kt}\,dG(t)=\int_{-\infty}^\infty e^{kt} e^{-(t+\frac{1}{2})^2/2}\,d\nu(t)\\
&=e^{k(k-1)/2} \int_{-\infty}^\infty e^{-(t-k+\frac{1}{2})^2/2}\,d\nu(t)\\
&=e^{k(k-1)/2} \int_{-\infty}^\infty e^{-(t+\frac{1}{2})^2/2}\,d\nu(t),
\end{split}
\end{equation*}
where the integral on the last line evaluates to one as $\int_{-\infty}^\infty e^{-(t+\frac{1}{2})^2/2}\,d\nu(t) = \lim_{t\rightarrow\infty} G(t)$ = 1.
\end{proof}

We now turn to the proof of Theorem \ref{thm:hardrel}. For any Hamiltonian path $P$, let $X_P$ denote the indicator that $P$ is increasing. Hence $X=\sum_P X_P$. Computing the expectation of $Y$ from the definition, we get
\begin{equation}\label{eq:EY}
\begin{split}
\E Y &= \sum_A \E\left[X_A \middle\vert X_{P_0}=1\right]\\
&= \frac{1}{ \Pr\left(X_{P_0}=1\right)} \sum_A \E X_{P_0} X_A \\
&= (n-1)! \frac{1}{n!} \sum_P  \sum_A \E X_P X_A = \frac{1}{n}\E X^2.
\end{split}
\end{equation}
As mentioned in the introduction, Lavrov and Loh \cite{LL16} showed that the second moment of $X$ is $\sim e n^2$. This implies that $$\E Y \sim e \E X$$ (as expected if $Y$ is approximately $X$ scaled up by a factor of $e$).

In estimating the difference between $eX$ and $Y$, it turns out to be useful to introduce a third random variable, $Z=Z_n$, the number of increasing Hamiltonian paths that are edge disjoint from $P_0$. We similarly have that $Z$ approximately equals $X$ up to a constant factor (in this case $e^{-2}$). For any two paths $A, B$ in $K_n$, let $\abs{A\cap B}$ denote the number of edges they have in common. By linearity of expectation, we have
\begin{equation*}
\E\left[Z\middle\vert X\right] = \sum_{A} \mathbbm{1}_{\abs{A\cap P_0}=0} \E\left[ X_A \middle\vert X\right] =  \sum_A \mathbbm{1}_{\abs{A\cap P_0}=0} \frac{X}{n!} = \left(\frac{1}{n!} \sum_A \mathbbm{1}_{\abs{A\cap P_0}=0}\right) X.
\end{equation*}

Let us label the vertices of $K_n$ by $1, 2, \dots n$ such that $P_0$ corresponds to the vertex sequence $\{1, 2, \dots, n\}$. Then we can interpret the sum $\frac{1}{n!}\sum_A \mathbbm{1}_{\abs{A\cap P_0}=0}$ as the probability that a uniformly chosen permutation $A=\{a_1, a_2, \dots a_n\}$ of $1, 2, \dots, n$ satisfies that $\abs{a_i-a_{i+1}} > 1$ for all $i=1, \dots, n-1$. It was shown, although somewhat sketchily, by Wolfowitz\footnote{In the terminology of Wolfowitz, a permutation with this property is said to have $n$ runs of consecutive elements.} \cite{W44} and elaborated on by Kaplansky \cite{K45} that this tends to $e^{-2}$ as $n\rightarrow \infty$. It follows that 
$$\E Z = \E \E[Z\vert X] \sim e^{-2} \E X = e^{-2}n,$$
(again, this is as expected if $Z$ approximately equals $e^{-2}X$).


For any real numbers $a$ and $b$ we have $2a^2+2b^2 -(a+b)^2 = a^2-2ab+b^2 = (a-b)^2\geq 0$. Hence $(a+b)^2 \leq 2a^2+2b^2$. By plugging in $a=Y-e^3Z$, $b=e^3Z-eX$ and taking expectation it follows that
$\E\left[\left(Y-e\,X\right)^2\right] \leq 2\E\left[\left(Y-e^3\,Z\right)^2\right]+2\E\left[\left(e^{3}Z-e\,X\right)^2\right]$. Hence, in order to prove Theorem \ref{thm:hardrel}, it suffices to show that
\begin{align*}
&\E\left[\left(Y-e^3Z\right)^2\right] = \E Y^2 - 2e^{3}\E YZ + e^{6} \E Z^2=o(n^2),\\
\intertext{and}
&\E\left[\left(e^2Z-X\right)^2\right] = e^{4}\E Z^2 - 2e^2\E XZ + \E X^2=o(n^2).
\end{align*}

Some of the terms in the two equations above we already know how to estimate. As already stated, $\E X^2 \sim e n^2$. We further have $\E XZ = \E [X \E[Z\vert X]] \sim e^{-2}\E X^2 \sim e^{-1} n^2$. Following the argument in \eqref{eq:EY}, we have
\begin{equation}
\E Y^2 =\sum_{A}\sum_B \E\left[ X_A X_B \middle\vert X_{P_0}=1\right]=\frac{1}{n} \sum_P \sum_{A}\sum_B \E\left[ X_{P}X_AX_B \right]=\frac{1}{n} \E X^3,
\end{equation}
so estimating the second moment of $Y$ is equivalent to estimating the third moment of $X$. Similarly, $\E YZ$ and $\E Z^2$ can be expressed as sums over subsets of the terms in $\E X^3$. More precisely, as $Z$ does not depend on the ordering of edges along $P_0$, it does not matter if it is defined over the original edge-ordering for $X$ or the modified one for $Y$, hence $YZ$ has the conditional distributions of $XZ$ given the event that $P_0$ is increasing. Therefore,
\begin{equation*}
\E YZ = \E[XZ\vert X_{P_0}=1] = \frac{1}{n}\sum_P \sum_A \sum_{\abs{B\cap P}=0} \E\left[ X_PX_AX_B \right].
\end{equation*}
Moreover,
\begin{equation*}
\E Z^2 = \E[Z^2\vert X_{P_0}=1] =  \frac{1}{n}\sum_P \sum_{\abs{A\cap P}=0} \sum_{\abs{B\cap P}=0} \E\left[ X_PX_AX_B \right].
\end{equation*}

Hence, in order to prove Theorem \ref{thm:hardrel}, it remains to show the following third moment estimates on $X$.
\begin{proposition}\label{prop:thirdmomentestimates}
As $n\rightarrow\infty$, we have
\begin{equation}\label{eq:EX3}
\E X^3 = \sum_A \sum_B \sum_C \E X_AX_BX_C \sim e^{3} n^3,
\end{equation}
and furthermore,
\begin{align}
\sum_A \sum_B \sum_{\abs{C\cap A}=0} \E X_AX_BX_C &\sim n^3,\label{eq:EX3CdisjointA}\\
\sum_A \sum_{\abs{B\cap A}=0} \sum_{\abs{C\cap A}=0} \E X_AX_BX_C &\sim e^{-3} n^3\label{eq:EX3BCdisjointA}.
\end{align}
\end{proposition}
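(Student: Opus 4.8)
The plan is to reduce all of \eqref{eq:EX3}--\eqref{eq:EX3BCdisjointA} to a single principle: \emph{conditioning on a bounded number of ``generically placed'' increasing Hamiltonian paths multiplies the conditional expected number of increasing Hamiltonian paths by the appropriate power of $e$}. Since the three sums equal $n\,\E Y^2$, $n\,\E YZ$ and $n\,\E Z^2$ (as recorded above), one expands, for instance,
\[ \E Y^2 = \E[X^2 \mid X_{P_0}=1] = \sum_A \Pr(X_A=1\mid X_{P_0}=1)\,\E[X\mid X_{P_0}=X_A=1]. \]
Because $\sum_A \Pr(X_A=1\mid X_{P_0}=1)=\E Y\sim en$, the target $\E Y^2\sim e^3n^2$ follows once one shows $\E[X\mid X_{P_0}=X_A=1]=(e^2+o(1))n$ for all but a set of paths $A$ carrying vanishing $\Pr(\cdot\mid X_{P_0}=1)$-mass (the exceptional set being paths that share an unboundedly large overlap with $P_0$, so that the boost degrades from $e^2$ back towards $e$). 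For \eqref{eq:EX3CdisjointA} one needs the same estimate with $A$ restricted to $A\cap P_0=\emptyset$, where $\Pr(X_A=1\mid X_{P_0}=1)=1/(n-1)!$, giving $\E YZ\sim \frac{e^2 n}{(n-1)!}\cdot\big|\{A:A\cap P_0=\emptyset\}\big|\sim \frac{e^2 n}{(n-1)!}\cdot e^{-2}n!=n^2$, the Wolfowitz--Kaplansky count $e^{-2}n!$ cancelling the $e^2$. For \eqref{eq:EX3BCdisjointA} one needs $\E[Z\mid X_{P_0}=X_B=1]\sim e^{-1}n$ for $B\cap P_0=\emptyset$ (one extra factor $e$ over $\E Z\sim e^{-2}n$), whence $\E Z^2\sim \frac{e^{-1}n}{(n-1)!}\cdot e^{-2}n!=e^{-3}n^2$.

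These ``$e^k$-boost'' estimates are what the bulk of the argument must prove, by classifying pairs — and triples — of directed Hamiltonian paths according to their \emph{overlap type}: the edge-labelled isomorphism type of the union $P_0\cup A$ (resp.\ $A\cup B\cup C$), recording which edges lie on which path and with which orientation. The event that $A,B,C$ are all increasing is exactly the event that the induced uniform ordering of $A\cup B\cup C$ is a linear extension of the partial order generated by the three edge-chains (and has probability $0$ when that relation contains a cycle), so $\E X_AX_BX_C$ equals $e(P_{ABC})/|A\cup B\cup C|!$ and depends only on the type; the edge-disjoint type gives the baseline $((n-1)!)^{-3}$. The dominant types are the \emph{sparse, non-interacting} ones: no edge on all three paths, each pairwise overlap a bounded number of bounded-length common sub-paths traversed in agreeing directions, and the three overlap regions vertex-disjoint. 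For such types $P_{ABC}$ is a disjoint union of long chains glued along a few short common sub-chains; a direct computation gives $\E X_AX_BX_C=(1+o(1))((n-1)!)^{-3}\,w(A,B)\,w(B,C)\,w(A,C)$ with $w\ge 1$ a pairwise weight ($w\equiv 1$ on disjoint pairs), and summing over these types factorizes into three near-independent copies of the second-moment count — which is precisely how the powers of $e$ (and, under a disjointness constraint, the factor $e^{-2}$) arise.

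The crux, and the source of the ``involved combinatorial arguments'', is bounding the contribution of all remaining types by $o(n^3)$. I would split this into: (a) types in which some pair shares a sub-path of unbounded length or an unbounded number of common edges; (b) types with an edge on all three paths, or in which the overlap regions of two distinct pairs meet; (c) degenerate types in which two of the paths coincide or share a $(1-o(1))$-fraction of their edges, for which $\E X_AX_BX_C$ can be as large as $((n-1)!)^{-2}$ or even $((n-1)!)^{-1}$. In each case one balances an upper bound on $e(P_{ABC})/|A\cup B\cup C|!$ — each extra unit of shared sub-chain both shortens the union and imposes an additional constraint, so this quantity decays in the total overlap — against an enumeration of how a prescribed family of common sub-paths can be simultaneously embedded into two or three Hamiltonian paths, which caps the number of triples of that type. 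I expect case (b) to be the hardest: there the clean product form for both the probability and the count breaks down because two pairwise overlaps interfere, so one needs a more hands-on analysis of the resulting glued poset and of the number of triples realizing it.
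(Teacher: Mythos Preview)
Your high-level plan --- classify triples by overlap type, compute the dominant ``sparse'' contribution, bound the rest by $o(n^3)$ --- is the paper's strategy, and the reductions to $\E Y^2$, $\E YZ$, $\E Z^2$ are the paper's own. But two of your steps are genuine gaps, not details to be filled in.

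First, your characterization of the dominant types is wrong: restricting to overlaps ``traversed in agreeing directions'' omits single shared edges traversed in \emph{opposite} directions, and these contribute at leading order (a reversed segment of length $\ge 2$ creates a cycle in the order relation and is excluded, but a reversed single edge does not). In the paper's final tally the constant $e^3$ arises as $e^{-6}\prod_{\alpha}\sum_{k_\alpha\ge 0}\tfrac{1}{k_\alpha!}\sum_{r_\alpha=0}^{k_\alpha}\binom{k_\alpha}{r_\alpha}2^{k_\alpha-r_\alpha}=e^{-6}(e^3)^3$; forcing $r_\alpha=0$ collapses this to $e^{-6}(e^2)^3=1$, so your restricted class would give $\E X^3\sim n^3$, not $e^3 n^3$. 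Second, the asserted factorization $\E X_AX_BX_C\approx((n-1)!)^{-3}\,w(A,B)\,w(B,C)\,w(A,C)$ for vertex-disjoint pairwise overlaps does not follow from any decomposition of the poset $P_{ABC}$: even with disjoint overlap regions the three chains are globally entangled through one another, and the linear-extension count does not split. The paper sidesteps this by rewriting $\E X^3=\sum_{(A,B,C,\preceq)}1/\abs{A\cup B\cup C}!$ as a sum over \emph{edge-ordered} triples and counting those directly via an ``edge list'' construction (Proposition~\ref{prop:trip}); the product structure you want emerges only after summing over all good types and applying a Brun-sieve argument that yields the $e^{-6}$ factor (Proposition~\ref{prop:termwiselimit} and Claim~\ref{claim:brun}). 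Finally, your deferred cases (a)--(c) are where essentially all of the labour sits (Proposition~\ref{prop:EX3On3}), organised around the \emph{reduced common edge graph} and the parameters $k_1,\dots,k_4$; in particular, handling your case (c) requires the non-obvious dichotomy (Claim~\ref{claim:togetherorapart}) that for any edge-ordered triple either some permutation has $\abs{(A'\cup B')\cap C'}\le(1-\varepsilon)n$ or else $\abs{A\cap B\cap C}>(1-18\varepsilon)n$, without which the balancing you describe does not close. The conditional-expectation reformulation $\E[X\mid X_{P_0}=X_A=1]\sim e^2n$ is correct but circular: proving it for generic $A$ is exactly the third-moment computation.
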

The proof of this proposition will be given in Section \ref{sec:proofofEX3est}. This completes the proof of our main result.

\begin{remark}
To end this section, we wish to point out two connections to already existing literature. First, the idea that one can couple a random variable to its size-biased version in order to determine properties of its distribution is also commonly used in applications of Stein's method, see for instance Section 3.4 in \cite{R11}. It might be interesting to see if there is a deeper connection here. Perhaps this is a possible avenue to resolving Conjecture \ref{conj:lognormal}. Second, random variables where size-biasing scales up the distribution by a constant factor has been investigated before, see Section 8 in \cite{AGK15+}, where similar properties to Theorem \ref{thm:limitpointprops} are derived. However, to the authors knowledge, the current paper is the first time this observation has been applied in the combinatorics literature.
\end{remark}

\section{Third moment analysis\label{sec:proofofEX3est}}

The proof of Proposition \ref{prop:thirdmomentestimates} will be structured as follows. We start by reformulating the third moment quantities in question as sums over \emph{edge-ordered triples} of Hamiltonian paths. To describe the possible ways the three paths may overlap, we define the \emph{common edge graph} and \emph{reduced common edge graph} of a triple of paths. After giving some combinatorial estimates for these objects, we will prove in Proposition \ref{prop:EX3On3} that $\E X^3=O(n^3)$ (but without obtaining the optimal constant). As a consequence of this analysis, we are able to identify the dominating contribution to $\E X^3$. Proposition \ref{prop:termwiselimit} gives more precise estimates for the dominating terms. This allows us to estimate equations \eqref{eq:EX3}-\eqref{eq:EX3BCdisjointA} as desired.

In what follows, we will use $A, B$ and $C$ to denote (directed) Hamiltonian paths. For such a path $A$, $X_A$ is the indicator for the event that the edges in $A$ are in ascending order. We denote by $\abs{A\cap B}$ and $\abs{ (A\cup B) \cap C}$ etc. the number of edges of the respective sets. Starting from the equality
\begin{equation*}
\E X^3 = \sum_{A, B, C} \E X_AX_BX_C = \sum_{A, B, C} \Pr(A, B, C \text{ are all increasing}),
\end{equation*}
we note that we $\Pr(A, B, C \text{ are all increasing})=\sum_{\preceq} \frac{1}{\abs{A\cup B\cup C}!}$ where $\preceq$ goes over all orderings of the edges of $A\cup B\cup C$ such that $A, B$ and $C$ are all increasing. Hence, we may equivalently write $\E X^3$ as a sum over all $(A, B, C, \preceq)$ where $\preceq$ is an ordering of the edges of $A\cup B\cup C$ that makes all three paths increasing:
\begin{align*}
\eqref{eq:EX3} &= \sum_{(A, B, C, \preceq)} \frac{1}{\abs{A\cup B\cup C}!}.
\intertext{Similarly we get the sums}
\eqref{eq:EX3CdisjointA}&=\sum_{(A, B, C, \preceq)} \frac{\mathbbm{1}_{\abs{A\cap C}=0}}{\abs{A\cup B\cup C}!},
\intertext{and}
\eqref{eq:EX3BCdisjointA} &= \sum_{(A, B, C, \preceq)} \frac{\mathbbm{1}_{\abs{A\cap B}=\abs{A\cap C}=0}}{\abs{A\cup B\cup C}!}.
\end{align*}
For Hamiltonian paths $A, B, C$ and an ordering $\preceq$ of the edges of $A\cup B\cup C$, we say that $(A, B, C, \preceq)$ is an \emph{edge-ordered triple} if $A, B$ and $C$ are all increasing with respect to $\preceq$. Below, we will sometimes for brevity suppress the notation $\preceq$ and simply denote an edge-ordered triple by $(A, B, C)$.


Dealing with edge-ordered triples $(A, B, C, \preceq)$, an important concept is the corresponding \emph{common edge graph}, $G=G(A, B, C, \preceq)$. This is defined as follows. Take the induced subgraph of $K_n$ consisting of all edges that are contained in at least two of the paths (any vertex of degree zero is removed). Label each edge according to which paths the edge is contained in, and in which direction these traverse the edge. We consider $G$ as an edge-ordered graph by letting it inherit the order $\preceq$ of $(A, B, C)$. See Figure \ref{fig:ceg} for an example.


\begin{figure}
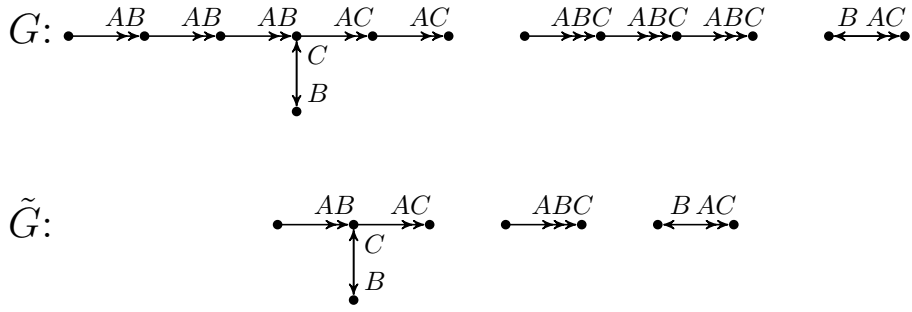

\begin{center}
\tikz [>=stealth',shorten >=2pt,auto,node distance=1.4cm,semithick,baseline=-3pt,font=\fontsize{10}{10}\selectfont]{
\node[font=\fontsize{16}{16}\selectfont] at (.5,0.1) {$G$:};
\node[font=\fontsize{16}{16}\selectfont] at (.5,-1.9-.5) {$\tilde{G}$:};
\draw (1,0) edge[->>] node[above,near end,font=\small] {$AB$} (2,0);
\draw (2,0) edge[->>] node[above,near end,font=\small] {$AB$} (3,0);
\draw (3,0) edge[->>] node[above,near end,font=\small] {$AB$} (4,0);
\draw (4,0) edge[->>] node[above,near end,font=\small] {$AC$} (5,0);
\draw (5,0) edge[->>] node[above,near end,font=\small] {$AC$} (6,0);
\draw (4, 0) -- node[right,near end,font=\small] {$B$} node[right,near start,font=\small] {$C$} (4, -1);
\draw (4, -.07) edge[<->] (4, -1);
\draw (7,0) edge[->>>] node[above,near end,font=\small] {$ABC$} (8,0);
\draw (8,0) edge[->>>] node[above,near end,font=\small] {$ABC$} (9,0);
\draw (9,0) edge[->>>] node[above,near end,font=\small] {$ABC$} (10, 0);
\draw (11,0) -- node[above,near start,font=\small] {$B$} node[above,near end,font=\small] {$AC$} (12, 0);
\draw (11.07,0) edge[<->>] (12,0);
\draw[draw=black,fill=black] (1,0) circle (1.5pt);
\draw[draw=black,fill=black] (2,0) circle (1.5pt);
\draw[draw=black,fill=black] (3,0) circle (1.5pt);
\draw[draw=black,fill=black] (4,0) circle (1.5pt);
\draw[draw=black,fill=black] (5,0) circle (1.5pt);
\draw[draw=black,fill=black] (6,0) circle (1.5pt);
\draw[draw=black,fill=black] (7,0) circle (1.5pt);
\draw[draw=black,fill=black] (8,0) circle (1.5pt);
\draw[draw=black,fill=black] (9,0) circle (1.5pt);
\draw[draw=black,fill=black] (10,0) circle (1.5pt);
\draw[draw=black,fill=black] (11,0) circle (1.5pt);
\draw[draw=black,fill=black] (12,0) circle (1.5pt);
\draw[draw=black,fill=black] (4,-1) circle (1.5pt);
\draw (1+2.75, -2-.5) edge[->>] node[above,near end,font=\small] {$AB$} (2+2.75, -2-.5);
\draw (2+2.75, -2-.5) edge[->>] node[above,near end,font=\small] {$AC$} (3+2.75, -2-.5);
\draw (2+2.75, -2-.5) -- node[right,near end,font=\small] {$B$} node[right,near start,font=\small] {$C$} (2+2.75, -3-.5);
\draw (2+2.75, -2.07-.5) edge[<->] (2+2.75, -3-.5);
\draw (4+2.75, -2-.5) edge[->>>] node[above,near end,font=\small] {$ABC$} (5+2.75, -2-.5);
\draw (6.07+2.75, -2-.5) edge[<->>] node[above,near start,font=\small] {$B$} node[above,near end,font=\small] {$AC$} (7+2.75, -2-.5);
\draw[draw=black,fill=black] (1+2.75,-2-.5) circle (1.5pt);
\draw[draw=black,fill=black] (2+2.75,-2-.5) circle (1.5pt);
\draw[draw=black,fill=black] (3+2.75,-2-.5) circle (1.5pt);
\draw[draw=black,fill=black] (4+2.75,-2-.5) circle (1.5pt);
\draw[draw=black,fill=black] (5+2.75,-2-.5) circle (1.5pt);
\draw[draw=black,fill=black] (6+2.75,-2-.5) circle (1.5pt);
\draw[draw=black,fill=black] (7+2.75,-2-.5) circle (1.5pt);
\draw[draw=black,fill=black] (2+2.75,-3-.5) circle (1.5pt);
}
\end{center}
\caption{\label{fig:ceg} An example of a common edge graph $G$ and the corresponding reduced common edge graph $\tilde{G}$. In both cases we consider the edges as being ordered from left to right, with the vertical ``BC'' edges placed between the incident ``AB'' and ``AC'' edges. Note that the components of $G$ and $\tilde{G}$ need not be paths. (In fact, they can even contain cycles.)}
\end{figure}



\begin{claim}\label{claim:GtildeGorder}
Let $G$ be a common edge graph of some edge-ordered triple, and let $v_0, e_1, v_1, e_2, \dots,$ $v_{l-1}, e_l, v_l$ be a path in $G$ where all edges have the same label, i.e. they are all contained in one of $(A\cap B)\setminus C$, $(A\cap C)\setminus B$, $(B\cap C)\setminus A$ or $A\cap B \cap C$. Then $e_1, e_2, \dots, e_l$ are next to each other in the edge-ordering of the common edge graph. Moreover, for any $0<i<l$, the only edges incident to $v_i$ in $G$ are $e_{i}$ and $e_{i+1}$.
\end{claim}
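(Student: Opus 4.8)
The plan is to use only two facts: every edge $e_i$ in the given path carries a label containing at least two of $A,B,C$, and each of $A,B,C$ is increasing with respect to $\preceq$. By symmetry we may assume the common label is $AB$ or $ABC$, so that every $e_i$ lies in both $A$ and $B$ (for a label $BC$ one runs the identical argument with the pair $B,C$, and for $AC$ with the pair $A,C$).

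First I would pin down the local structure around the interior vertices. For $0<i<l$ the edges $e_i$ and $e_{i+1}$ are distinct (the path in $G$ has distinct vertices) and share $v_i$, and both lie in $A$. Since $A$ is a Hamiltonian path, each vertex has $A$-degree $1$ or $2$; an $A$-degree-$1$ vertex has a single incident $A$-edge, so $v_i$ must have $A$-degree $2$, with incident $A$-edges exactly $e_i$ and $e_{i+1}$. The same holds with $B$. This yields the ``moreover'' part: any edge $g$ of $G$ incident to $v_i$ lies in at least two of $A,B,C$, hence in $A$ or in $B$; but the only $A$-edges and $B$-edges at $v_i$ are $e_i,e_{i+1}$, so $g\in\{e_i,e_{i+1}\}$. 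It also shows that $v_0,e_1,v_1,\dots,e_l,v_l$ is a contiguous subpath of $A$, and likewise of $B$, traversed by each in one of its two directions.

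Next I would settle the ordering statement. Because $A$ is increasing, the restriction of $\preceq$ to the edges of $A$ is exactly the order in which $A$ traverses them; since $e_1,\dots,e_l$ form a contiguous stretch of $A$, they occupy consecutive positions among the edges of $A$ in $\preceq$. Relabelling so that $e_1\prec e_2\prec\cdots\prec e_l$, we see $A$ traverses this stretch as $v_0\to\cdots\to v_l$. The same reasoning applies to $B$, and since $\preceq$ is a single linear order, $B$ is forced to traverse the stretch in the same direction, so $e_1,\dots,e_l$ are consecutive among the edges of $B$ too. Finally, every edge $f$ of $G$ lies in at least two of $A,B,C$, hence lies in $A$ or in $B$; therefore, if $f\notin\{e_1,\dots,e_l\}$ then $f$ cannot lie strictly between $e_1$ and $e_l$ in $\preceq$. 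That is precisely the assertion that $e_1,\dots,e_l$ are next to each other in the edge-ordering of $G$.

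I do not expect a genuine obstacle: the whole argument is bookkeeping built on ``degree at most $2$'' plus ``increasing path $=$ $\preceq$ agrees with traversal order''. The only points demanding care are the case split over which pair of paths the common label names (one must check in each label class that every edge of $G$ lies in one of the two named paths, which holds because the edges of $G$ are exactly the pairwise intersections), and the small observation that normalising to $e_1\prec\cdots\prec e_l$ pins down the traversal directions of both relevant paths, ruling out any direction ambiguity that might otherwise spoil the ``consecutive'' conclusion.
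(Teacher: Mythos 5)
Your proof is correct and takes essentially the same route as the paper's: after the same WLOG reduction to edges in $A\cap B$, both arguments rest on the facts that every edge of the common edge graph lies in $A$ or in $B$, and that self-avoidance plus monotonicity forces adjacent common edges to be consecutive among the edges of $A$ (respectively $B$) in $\preceq$. The extra normalisation of traversal directions is harmless but not needed.
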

\begin{proof}
Let us, without loss of generality, assume that the edges are in $A\cap B$. As $A, B,$ and $C$ are self-avoiding, no edges in $A$ or $B$ can come between $e_i$ and $e_{i+1}$ in the edge-ordering. Hence any such edge is unique to $C$, and not in the common edge graph. Similarly $e_i$ and $e_{i+1}$ are the only edges in $A$ and $B$ respectively incident to $v_i$, which makes them the only incident edges in the common edge graph. 
\end{proof}

Motivated by this claim, we define the \emph{reduced common edge graph} of an edge-ordered triple, $\tilde{G}=\tilde{G}(A, B, C, \preceq)$, as the graph obtained by collapsing all paths as above in the common edge graph to a single edge, preserving edge labels and the edge-ordering, see Figure \ref{fig:ceg}. It follows from the preceding claim that $G$, including edge-ordering, can be uniquely recovered up to isomorphism from $\tilde{G}$ if one knows the lengths each of the collapsed paths.

For two paths $A$ and $B$, we will refer to a connected component of $A\cap B$ as a \emph{common segment} of $A$ and $B$. Moreover, for three paths $A, B$ and $C$, a common segment of $A\cup B$ and $C$ is a connected component of $(A\cup B)\cap C$. Note that any such component is a connected subgraph of $C$, hence a common segment is always a path segment. For an edge-ordered triple $(A, B, C)$, we define
\begin{align}
\label{eq:defk1}k_1 = &\text{ the number of common segments between $A$ and $B$},\\
\label{eq:defk2}k_2 = &\text{ the number of common segments between $A\cup B$ and $C$},\\
\label{eq:defk3}k_3 = &\text{ the number of components in the common edge graph}\\\nonumber &\text{ that do not contain an edge common to $A$ and $B$},\\
\label{eq:defk4}k_4 = &\text{ the number of vertices in the common edge graph which}\\\nonumber &\text{ are incident to an edge in $A\cap C$ and one in $B\cap C$, but}\\\nonumber &\text{ none in $A\cap B$}.
\end{align}
Note that each of these quantities are uniquely determined by the reduced common edge graph.

\begin{figure}[h!]
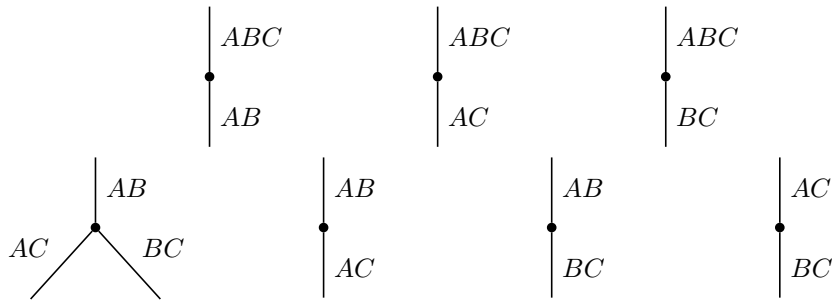

\begin{center}
\tikz [>=stealth',shorten >=2pt,auto,node distance=1.4cm,semithick,baseline=-3pt,font=\fontsize{10}{10}\selectfont]{
\node[draw=black,fill=black,circle,inner sep=0pt,minimum size=3pt] (ABCAB) at (-3,0){};
\node[draw=black,fill=black,circle,inner sep=0pt,minimum size=3pt] (ABCAC) at (0,0){};
\node[draw=black,fill=black,circle,inner sep=0pt,minimum size=3pt] (ABCBC) at (3,0){};
\node[draw=black,fill=black,circle,inner sep=0pt,minimum size=3pt] (ABACBC) at (-4.5,-2){};
\node[draw=black,fill=black,circle,inner sep=0pt,minimum size=3pt] (ABAC) at (-1.5,-2){};
\node[draw=black,fill=black,circle,inner sep=0pt,minimum size=3pt] (ABBC) at (1.5,-2){};
\node[draw=black,fill=black,circle,inner sep=0pt,minimum size=3pt] (ACBC) at (4.5,-2){};
\draw (ABCAB) -- +(90:1) node [midway, right] {$ABC$};
\draw (ABCAB) -- +(270:1) node [midway, right] {$AB$};
\draw (ABCAC) -- +(90:1) node [midway, right] {$ABC$};
\draw (ABCAC) -- +(270:1) node [midway, right] {$AC$};
\draw (ABCBC) -- +(90:1) node [midway, right] {$ABC$};
\draw (ABCBC) -- +(270:1) node [midway, right] {$BC$};
\draw (ABACBC) -- +(90:1) node [midway, right] {$AB$};
\draw (ABACBC) -- +(-.9,-1) node [midway, above left] {$AC$};
\draw (ABACBC) -- +(.9,-1) node [midway, above right] {$BC$};
\draw (ABAC) -- +(90:1) node [midway, right] {$AB$};
\draw (ABAC) -- +(270:1) node [midway, right] {$AC$};
\draw (ABBC) -- +(90:1) node [midway, right] {$AB$};
\draw (ABBC) -- +(270:1) node [midway, right] {$BC$};
\draw (ACBC) -- +(90:1) node [midway, right] {$AC$};
\draw (ACBC) -- +(270:1) node [midway, right] {$BC$};
}
\end{center}
\caption{\label{fig:posneighbs} The possible neighborhoods of a vertex with degree at least $2$ in a reduced common edge graph. Note that the edge labels of $G$ should contain both information about which paths traverse each edge, and in which directions these paths traverse it. However, to reduce the number of cases this latter part of the labels are not indicated here.}
\end{figure}

\begin{claim}\label{claim:regbound}
Let $k_1, k_2$ and $k_4$ be given non-negative integers. Then (up to isomorphism) the number of reduced common edge graphs $\tilde{G}$ such that $k_1(\tilde{G})=k_1, k_2(\tilde{G})=k_2$ and $k_4(\tilde{G})=k_4$ is at most $e^{O(k_1+k_2+k_4)}$. Moreover, any such $\tilde{G}$ contains at most $O(k_1+k_2+k_4)$ edges.
\end{claim}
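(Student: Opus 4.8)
The plan is to proceed in three stages: first record the structure of $\tilde G$ that is forced by Claim~\ref{claim:GtildeGorder} and Figure~\ref{fig:posneighbs}, then bound the number of edges of $\tilde G$ in terms of $k_1,k_2,k_4$, and finally count the reduced common edge graphs by exhibiting, for each one, a description using only $O(k_1+k_2+k_4)$ bits of data. Throughout write $m=k_1+k_2+k_4$, and call an edge of $\tilde G$ an \emph{$A$-edge} if $A$ traverses it (label in $\{AB,AC,ABC\}$), and similarly a $B$-edge or $C$-edge. The structural facts I would establish first are: every vertex of $\tilde G$ has degree at most $3$, and two edges meeting at a vertex always carry different labels (otherwise that vertex is internal to a monochromatic path and hence was collapsed, by Claim~\ref{claim:GtildeGorder}); a vertex of degree exactly $3$ carries precisely the three labels $AB$, $AC$, $BC$ (this is the content of Figure~\ref{fig:posneighbs}, and follows because $\sum_{e\ni v}\#\{\text{paths through }e\}\le 6$, each of $A,B,C$ meeting $v$ in at most two edges); $k_4$ equals the number of degree-$2$ vertices with label set $\{AC,BC\}$; and, writing $V_3$ for the set of degree-$3$ vertices, $|V_3|\le 2k_1$, since at such a vertex $A$ traverses the $AB$-edge and the $AC$-edge consecutively, the $AB$-edge lies in a common segment of $A$ and $B$ while the $AC$-edge does not, so that segment has an endpoint there, and there are only $2k_1$ such endpoints.

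For the edge bound I would count edges path by path. For $P\in\{A,B,C\}$ the $P$-edges of $\tilde G$ split into the maximal runs that are consecutive along $P$ — the \emph{$P$-blocks}, i.e.\ the connected components of $P\cap\bigl(\bigcup_{Q\ne P}Q\bigr)$ — and inside each block the label changes at every internal vertex. The $C$-edges form $k_2$ blocks (the common segments of $A\cup B$ and $C$), so their number is $k_2$ plus the number of label changes along $C$; each such change is either between two edges of $\{AC,BC\}$, hence at a $\{AC,BC\}$-vertex or a $V_3$-vertex (at most $k_4+2k_1$ in total), or it involves an $ABC$-edge and an adjacent $AC$- or $BC$-edge, in which case $A$ or $B$ traverses these two consecutively and a common segment of $A$ and $B$ ends there (at most $2k_1$). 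Hence there are $O(m)$ $C$-edges, and the same argument bounds the number of connected components of $A\cap C$ (count the runs of $A$-edges inside the $k_2$ blocks of $C$) by $O(m)$. Writing $n_{ABC}$ for the number of $ABC$-edges, which equals the number of components of $A\cap B\cap C$, the elementary fact that intersecting a union of $p$ subpaths of a path with a union of $q$ subpaths leaves at most $p+q$ components, applied to $A\cap B\cap C=(A\cap B)\cap(A\cap C)$ inside the path $A$, gives $n_{ABC}\le k_1+\#\mathrm{comp}(A\cap C)=O(m)$. Finally the number of $A$-edges is the number of $A$-blocks (at most $k_1+\#\mathrm{comp}(A\cap C)=O(m)$) plus the label changes along $A$, and each of the latter either ends a common segment of $A$ and $B$ (the cases $\{AB,AC\}$, $\{AC,ABC\}$, $V_3$; at most $4k_1$) or is an $AB\leftrightarrow ABC$ change internal to such a segment (at most $2n_{ABC}$). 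Adding the $A$-edges to the $BC$-only edges (bounded by the $C$-edge count) yields $|E(\tilde G)|=O(m)$, and therefore also $|V(\tilde G)|\le 2|E(\tilde G)|=O(m)$.

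For the counting step, the key claim is that $\tilde G$, as an edge-ordered and edge-labelled graph up to isomorphism, is determined by: the abstract linear order $\preceq$ on its $O(m)$ edges; the label of each edge, from a fixed finite alphabet (there are only a bounded number of labels once the traversal directions are recorded, up to swapping the two endpoints of an edge); and, for each $P\in\{A,B,C\}$ and each consecutive pair of $P$-edges in $\preceq$-order, one bit recording whether the two edges share a vertex. Indeed, since $P$ is increasing, its edges appear in $\preceq$-order exactly in the order in which $P$ traverses them, so these bits reconstruct the $P$-blocks together with all their internal vertices; each edge then prescribes, through its (two or three) path-memberships and directions, a forced identification of endpoints among the blocks it belongs to, and carrying out all these identifications rebuilds $\tilde G$. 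Since there are $O(m)$ choices for the number of edges, $2^{O(m)}$ choices for the labels, and $2^{O(m)}$ choices for the bits, the number of reduced common edge graphs with the prescribed $k_1,k_2,k_4$ is $e^{O(m)}=e^{O(k_1+k_2+k_4)}$, and the edge bound above gives the second assertion.

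The step I expect to be the main obstacle is the edge bound, and within it the control of the $ABC$-edges: because the common edge graph may contain cycles and its components need not be paths, one has to verify that the various ``label change'' counts are genuinely bounded by $k_1,k_2,k_4$ and do not circularly reintroduce $n_{ABC}$ — this is precisely why $n_{ABC}$ must be bounded separately via the components of $A\cap C$ and the interval-intersection inequality. A secondary delicate point is making the reconstruction airtight: checking that the order, labels, and share-a-vertex bits leave no freedom in how blocks of different paths interleave, which relies on each shared edge forcing the relevant vertex identifications.
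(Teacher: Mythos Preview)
Your argument is correct, and the encoding step is essentially the paper's: both of you process the edges of $\tilde G$ in $\preceq$-order and record, for each edge, its label together with the information of which of the (at most three) most recent same-path edges it attaches to. The paper phrases this as ``at most three possible vertices for attachment'', you phrase it as one share-a-vertex bit per path; these are equivalent, and your verification that any two edges meeting at a vertex must share a path (so that the bits capture every identification) is exactly the point the paper leaves implicit.

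Where you genuinely diverge is the edge bound. The paper does not count edges along paths at all: it instead bounds $|V(\tilde G)|$ directly by running through the seven possible neighbourhoods in Figure~\ref{fig:posneighbs} and observing that every vertex of $\tilde G$ is either an endpoint of a common segment of $A$ and $B$ (hence contributes $\tfrac12$ to $k_1$), or an endpoint of a common segment of $A\cup B$ and $C$ (contributes $\tfrac12$ to $k_2$), or is counted by $k_4$. Since the maximum degree is $3$, this immediately gives $|E(\tilde G)|\le\tfrac32|V(\tilde G)|=O(k_1+k_2+k_4)$. Your route---counting $C$-edges via label changes, then bounding the components of $A\cap C$, then $n_{ABC}$ via the interval-intersection inequality, and finally the $A$-edges---reaches the same conclusion but with several more moving parts. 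What you gain is that your argument never needs the complete list of neighbourhood types, only the facts that adjacent edges carry different labels and that degree-$3$ vertices have label set $\{AB,AC,BC\}$; what the paper gains is brevity, since the single observation ``every vertex is charged to one of $k_1,k_2,k_4$'' replaces the entire chain of auxiliary bounds you flagged as the main obstacle.
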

\begin{proof}
We first bound the number of vertices in $\tilde{G}$. Consider the possible sets of edges incident to a vertex $v\in\tilde{G}$. If $v$ has degree $1$, then its incident edge is either common to $A$ and $B$, in which case it is an end-point of a common segment of $A$ and $B$ and hence contributes by $\frac{1}{2}$ to $k_1$, or the incident edge is common to $C$ and at least one of $A$ and $B$, in which case it is an end-point of a common segment of $A\cup B$ and $C$, and hence contributes by $\frac{1}{2}$ to $k_2$. The cases when $v$ has degree at least $2$ is illustrated in Figure \ref{fig:posneighbs}. To reduce the number of cases, we can ignore the directions in which the paths traverse the various edges, and only distinguish which paths each edge is contained in. We see clearly that $v$ is either the end-point of a common segment of $A$ and $B$ (cases $2$, $3$, $4$, $5$ and $6$), in which case it contributes by $\frac{1}{2}$ to $k_1(\tilde{G})$, an end-point of a common segment of $A\cup B$ and $C$ (cases $1$, $5$ and $6$), in which case it contributes $\frac{1}{2}$ to $k_2(\tilde{G})$, or counted in $k_4(\tilde{G})$ (case $7$). As any vertex has at most three incident edges, it follows that $\abs{E(\tilde{G})}=O(k_1+k_2+k_4)$.

Suppose that we construct $\tilde{G}$ by adding the edges one at a time, in ascending order. For each new edge $e$ we need to choose its label, whether or not its end-points are already in the graph, and in that case which vertices these are. Assume that $e$ has an end-point already in the graph. Since we are inserting edges sorted by priority, $e$ can be adjacent only to an edge with the highest order among the edges which are part of the same path. For any such edge $e'$ and for a fixed orientation of $e$, there is only one possible vertex of $e'$ where we can attach $e$. This gives at most three possible vertices for attachment for $e$. Thus $\tilde{G}$ can be encoded in $O(\abs{E(\tilde{G})})$ bits.
\end{proof}

\begin{claim}\label{claim:togetherorapart}
Let $(A, B, C)$ be an edge-ordered triple in $K_n$. Then, for any $\varepsilon>0$, either $\abs{(A' \cup B') \cap C'}\leq (1-\varepsilon)n$ for some permutation $(A', B', C')$ of $(A, B, C)$, or $\abs{A\cap B \cap C}\geq (1-18\varepsilon)n$.
\end{claim}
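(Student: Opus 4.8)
The plan is to set up a counting/inclusion-exclusion argument on the edges of $C$, tracking how much of $C$ is ``unshared'' with $A$ or $B$. Write $C$ as its sequence of $n-1$ edges. For each edge $e$ of $C$, say $e$ is \emph{private to $C$} if $e\notin A$ and $e\notin B$; say $e$ is \emph{$AB$-shared} if $e\in A\cap B$; and otherwise $e$ is \emph{singly shared}, i.e.\ $e$ lies in exactly one of $A,B$ (together with $C$). Let $p$, $q$, $s$ respectively denote the number of edges of $C$ of these three types, so $p+q+s=n-1$. The quantity $\abs{(A\cup B)\cap C}$ equals $q+s$, while $\abs{A\cap B\cap C}=q$. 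If $p\geq\varepsilon n$ then $\abs{(A\cup B)\cap C}=n-1-p\leq(1-\varepsilon)n$ for $n$ large enough (and trivially otherwise), so the interesting regime is $p<\varepsilon n$; similarly if $s\geq 17\varepsilon n$ we are done since then $q=n-1-p-s\leq n-1-17\varepsilon n$, wait — we want the \emph{other} alternative, so let me instead aim to show that if $p<\varepsilon n$ and $s$ is large, then one of the permuted quantities $\abs{(A'\cup B')\cap C'}$ is small. The point is that a singly shared edge of $C$ lies in, say, $A\cap C$ but not $B$; by Claim~\ref{claim:GtildeGorder} such edges come in maximal runs (common segments of $A\cup B$ and $C$), and the key structural input is that consecutive edges of $C$ that are $AB$-shared versus $A$-only-shared versus $B$-only-shared cannot interleave too wildly, because each switch corresponds to an endpoint of a common segment.

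The core estimate I would extract is this: the number of common segments of $A\cup B$ with $C$, call it $k_2$, satisfies $k_2 \leq p + (\text{number of maximal runs of shared edges along }C)$, and more usefully, the number of edges of $C$ lying in $A$ but not $B$ (the ``$A$-only'' edges of $C$) is controlled by the number of common segments of $A$ and $C$ that are not common segments of $B$ and $C$. Concretely: look at $C\cap A$ and $C\cap B$ as unions of sub-paths of $C$. If $\abs{C\cap A}\geq (1-\delta)n$ and $\abs{C\cap B}\geq(1-\delta)n$ for a suitable $\delta$, then since $A$ and $B$ are each self-avoiding Hamiltonian paths, overlapping most of $C$, they must in fact agree with $C$ on most edges \emph{in common} — formally, $\abs{A\cap B\cap C}\geq \abs{A\cap C}+\abs{B\cap C}-\abs{(A\cup B)\cap C}\geq (1-2\delta)n - \abs{(A\cup B)\cap C}$. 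So the entire claim reduces to a dichotomy purely about $\abs{(A\cup B)\cap C}$ versus $\abs{A\cap C}$ and $\abs{B\cap C}$: if $\abs{(A\cup B)\cap C}>(1-\varepsilon)n$, then because $\abs{(A\cup B)\cap C}\leq \abs{A\cap C}+\abs{B\cap C}$ and each of $\abs{A\cap C},\abs{B\cap C}\leq n-1$, we already get both $\abs{A\cap C},\abs{B\cap C}\geq (1-\varepsilon)n - 1$... but that is not quite enough to pin down $\abs{A\cap B\cap C}$, so one more averaging step over the three permutations is needed to convert ``$(A\cup B)\cap C$ large for \emph{all} permutations'' into ``all three pairwise intersections large'', and then inclusion–exclusion on the three sets $A\cap C$, $B\cap C$ (inside the ground set $C$, which has $n-1$ edges) forces the triple intersection to be at least $(1-O(\varepsilon))n$, with the constant working out to $18$ after bookkeeping the ``$+1$''s and the three symmetric applications.

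Carrying this out in order: (i) reduce to the case $\abs{(A'\cup B')\cap C'}> (1-\varepsilon)n$ for every permutation $(A',B',C')$, since otherwise the first alternative holds; (ii) apply this with $C'$ fixed and $\{A',B'\}=\{A,B\}$ to get $\abs{A\cap C},\abs{B\cap C}> (1-\varepsilon)n - 1$, and likewise with the roles of $A,B,C$ cycled to get all three pairwise intersections $> (1-\varepsilon)n-1$; (iii) work inside $C$ (a set of $n-1$ edges): $\abs{A\cap B\cap C}\geq \abs{A\cap C}+\abs{B\cap C}-\abs{(A\cup B)\cap C}\geq 2((1-\varepsilon)n-1)-(n-1)=(1-2\varepsilon)n-1$, and then bound the slack more carefully using the other cyclic inequalities to absorb the loss, yielding $\abs{A\cap B\cap C}\geq(1-18\varepsilon)n$ for $n$ large (the generous constant $18$ presumably soaks up lower-order terms and a looser application of step (ii) than the crude one I sketched). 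The main obstacle I anticipate is step (ii)–(iii): squeezing ``$(A\cup B)\cap C$ large for all permutations'' into a genuine lower bound on the \emph{triple} intersection needs the permutation-symmetry used carefully, and one must be careful that $A,B,C$ being Hamiltonian paths (not arbitrary edge sets) is what makes $\abs{\cdot\cap C}\leq n-1$ tight and lets the inclusion–exclusion close; getting the constant down to something explicit like $18$ rather than just $O(\varepsilon)$ is where the routine-but-fiddly arithmetic lives.
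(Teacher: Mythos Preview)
Your inclusion--exclusion approach has a genuine gap at step~(ii): from the hypothesis that $\abs{(A'\cup B')\cap C'}>(1-\varepsilon)n$ for \emph{every} permutation $(A',B',C')$, you cannot deduce that the individual pairwise intersections $\abs{A\cap C}$, $\abs{B\cap C}$, $\abs{A\cap B}$ are each large. Writing $a=\abs{A\cap B}$, $b=\abs{A\cap C}$, $c=\abs{B\cap C}$, $t=\abs{A\cap B\cap C}$, the three hypotheses say $b+c-t$, $a+c-t$, $a+b-t$ are all $>(1-\varepsilon)n$, together with the constraints $a,b,c\leq n-1$ and $b+c-t,\ a+c-t,\ a+b-t\leq n-1$. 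These are satisfied by $a=n/2$, $b=c=n/2-1$, $t=0$: then all three quantities are $\geq n-2$, yet the triple intersection vanishes. So no amount of cardinality bookkeeping on three sets of size $n-1$ will force $t\geq(1-18\varepsilon)n$; the claim genuinely requires structural input beyond inclusion--exclusion.

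The missing ingredient is the \emph{edge-ordering}: the statement is about edge-ordered triples, and the paper's proof uses this in an essential way. The argument goes via a vertex-weight count in the common edge graph $G$. Each vertex $v$ has weight $w(v)$ equal to the number of incident edges in $G$, counted with multiplicity $2$ or $3$ according to how many paths share them; the sum $\frac12\sum_v w(v)$ equals $\abs{(A\cup B)\cap C}+\abs{(A\cup C)\cap B}+\abs{(B\cup C)\cap A}$. The maximum weight $6$ is attained only by two local types of vertices, and here is where the ordering enters: one of these types involves an edge traversed in opposite directions by two paths, and such an edge \emph{cannot have both endpoints of weight $6$}, because the resulting five-edge configuration admits no linear order making all three path fragments increasing. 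This yields a nontrivial bound $y\leq\tfrac{2}{3}(n-x)$ relating the counts $x,y$ of the two types, which is exactly what closes the inequality $\tfrac12\sum_v w(v)\leq 3(1-\varepsilon)n$ once $\abs{A\cap B\cap C}<(1-18\varepsilon)n$. Your sketch never invokes the ordering, and that is why it cannot close.
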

\begin{proof}
Let $G$ be the common edge graph of $(A, B, C)$. We define the weight of a vertex in $G$, denoted by $w(v)$, as the number of edges incident to $v$ in $G$, where an edge is counted with multiplicity $2$ if it is shared by two paths and $3$ if shared by all three. It is not too hard to see that the only ways a vertex can have the maximal weight of $6$ is 
\begin{equation*}
\hbox{\tikz [>=stealth',shorten >=2pt,auto,node distance=1.4cm,semithick,baseline=-3pt]{
\draw (0, 1) edge[->>>] (0, 0);
\draw (0, 0) edge[->>>] (0, -1);
\draw[draw=black,fill=black] (0,0) circle (1.5pt);
}}\qquad\text{or}\qquad\hbox{\tikz [>=stealth',shorten >=2pt,auto,node distance=1.4cm,semithick,baseline=-3pt]{
\draw (0, 1) edge[->>] (0, 0);
\draw (.07, 0) edge[<->] (1, 0);
\draw (0, 0) edge[->>] (0, -1);
\draw[draw=black,fill=black] (0,0) circle (1.5pt);
}}.
\end{equation*}
Let us denote the number of these types of vertices by $x$ and $y$ respectively.

Now, one can observe that no \hbox{\tikz [>=stealth',shorten >=2pt,auto,node distance=1.4cm,semithick,baseline=-3pt]{
\draw (.07, 0) edge[<->] (1, 0);
}} edge can have two end-points of weight $6$, as it is impossible to order the edges in
\begin{equation*}
\hbox{\tikz [>=stealth',shorten >=2pt,auto,node distance=1.4cm,semithick,baseline=-3pt]{
\draw (0, 1) edge[->>] (0, 0);
\draw (.07, 0) edge[<->] (1, 0);
\draw (0, 0) edge[->>] (0, -1);
\draw (1, 1) edge[->>] (1, 0);
\draw (1, 0) edge[->>] (1, -1);
\draw[draw=black,fill=black] (0,0) circle (1.5pt);
\draw[draw=black,fill=black] (1,0) circle (1.5pt);
}}
\end{equation*}
such that the path segments of $A$, $B$ and $C$ are all increasing. Furthermore, as each of the $n-x-y$ remaining vertices can have at most two \hbox{\tikz [>=stealth',shorten >=2pt,auto,node distance=1.4cm,semithick,baseline=-3pt]{
\draw (.07, 0) edge[<->] (1, 0);
}} edges, it follows that $y\leq\text{ number of }$ \hbox{\tikz [>=stealth',shorten >=2pt,auto,node distance=1.4cm,semithick,baseline=-3pt]{
\draw (.07, 0) edge[<->] (1, 0);
}} $\text{ edges } \leq 2(n-x-y)$, which implies that $y \leq \frac{2}{3}(n-x)$.

Now, assume that $\abs{A\cap B\cap C} < (1-18\varepsilon)n$. One readily sees that $\abs{A\cap B \cap C} \geq x$. We then have
\begin{equation*}
\begin{split}
&\abs{(A\cup B) \cap C} + \abs{(A\cup C) \cap B} + \abs{(B\cup C) \cap A}\\
&\qquad = \frac{1}{2} \sum_{v\in G} w(v) \leq 3(x+y)+\frac{5}{2}(n-x-y)\\
&\qquad \leq \frac{17n}{6}+\frac{x}{6}\leq 3(1-\varepsilon)n,
\end{split}
\end{equation*}
where in the second to last step we used $y \leq \frac{2}{3}(n-x)$.
\end{proof}

Let $\tilde{G}$ be a reduced common edge graph, and let $c_{AB}, c_{AC}, c_{BC}$ and $c_{ABC}$ be non-negative integers. To simplify notation we will write $\bar{c}=(c_{AB}, c_{AC}, c_{BC}, c_{ABC})$. Let $\Trip_n(\tilde{G}, \bar{c})$ denote the number of edge-ordered triples $(A, B, C)$ corresponding to $\tilde{G}$ such that $$\abs{ (A \cap B) \setminus C}= c_{AB},\qquad\abs{ (A \cap C) \setminus B}= c_{AC},\qquad\abs{ (B \cap C) \setminus A}= c_{BC}$$ and $$\abs{ A \cap B \cap C}= c_{ABC}.$$
Note that this means that $\abs{A\cup B\cup C} = 3n-3-c_{AB}-c_{AC}-c_{BC}-2c_{ABC}$. Hence, letting $\trip_n(\tilde{G}, \bar{c}) = \Trip_n(\tilde{G}, \bar{c})/(3n-3-c_{AB}-c_{AC}-c_{BC}-2c_{ABC})!$, we can rewrite the weighted sum over edge-ordered triples in the beginning of the section as $\E X^3 = \sum_{\tilde{G}} \sum_{\bar{c}} t_n(\tilde{G}, \bar{c})$.

The following proposition gives a reasonably good bound for $\Trip_n(\tilde{G}, \bar{c})$ provided either $c_{ABC}$ is close to $n$, or $n-c_{AC}-c_{BC}-c_{ABC}$ is of order $n$. By Claim \ref{claim:togetherorapart} we know that we can always permute the paths in an edge-ordered triple such that one of these two properties is satisfied.

\begin{proposition}\label{prop:trip}
For $\tilde{G}$ and $\bar{c}$ as above, we have
\begin{equation*}
\begin{split}
&\Trip_n(\tilde{G}, \bar{c}) \leq \left(\prod_\alpha {c_\alpha-1 \choose l_\alpha-1}\right) 
{2n-2-2c_{AB}-2c_{ABC}+k_1 \choose n-1-c_{AB}-c_{ABC}, n-1-c_{AB}-c_{ABC}, k_1}
\cdot\\
&\cdot{2n-2-2c_{AB}-2c_{ABC} \choose k_3}{3n-3-c_{AB}-2c_{AC}-2c_{BC}-3c_{ABC}+k_2 \choose n-1-c_{AC}-c_{BC}-c_{ABC} }\cdot\\
&\cdot n! (n-c_{AB}-c_{ABC}-k_1-k_4)! (n-c_{AC}-c_{BC}-c_{ABC}-k_2)!,
\end{split}
\end{equation*}
where the product goes over $\alpha\in\{AB, AC, BC, ABC\}$, $k_i=k_i(\tilde{G})$ for $i=1, 2, 3, 4$ and $l_\alpha = l_\alpha(\tilde{G})$ denotes the number of edges in $\tilde{G}$ labelled $\alpha$, that is the number of $AB$-, $AC$-, $BC$-, and $ABC$-edges respectively in $\tilde{G}$.
\end{proposition}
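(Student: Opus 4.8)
The plan is to prove the bound by a reconstruction (encoding) argument: I will describe how any edge-ordered triple $(A,B,C,\preceq)$ counted by $\Trip_n(\tilde G,\bar c)$ can be specified by a sequence of combinatorial choices, and then bound the number of such sequences by the claimed product. Only the implication ``every valid triple arises from \emph{some} sequence of choices'' is needed, so over-counting — and choices that fail to produce a valid triple — are harmless, which is exactly why the conclusion is an inequality. The first step is to recover the common edge graph $G$ from $\tilde G$. By Claim \ref{claim:GtildeGorder}, $G$ (as an edge-ordered labelled graph) is obtained from $\tilde G$ by replacing each $\alpha$-labelled edge by a monochromatic path, and it is determined once the length of each such path is known. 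The $l_\alpha$ paths replacing the $\alpha$-edges have lengths forming a composition of $c_\alpha$ into $l_\alpha$ positive parts, which gives $\binom{c_\alpha-1}{l_\alpha-1}$ possibilities and hence $\prod_\alpha\binom{c_\alpha-1}{l_\alpha-1}$ in total.

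Next I would reconstruct the abstract edge-ordered structure of $A\cup B$. Since $A$ and $B$ are increasing, their edge-orders equal their path-orders, so each of $A,B$ is determined, as an abstract edge-ordered path, by which of its edges are shared and where those edges sit in the order. After contracting the $k_1$ common segments of $A$ and $B$, the contractions $A^c,B^c$ each have $a:=n-1-c_{AB}-c_{ABC}$ private edges and $k_1$ shared edges, whose mutual order $G$ already fixes. A word of length $2a+k_1$ recording, at each position of the order of $A^c\cup B^c$, whether one sees a private $A$-edge, a private $B$-edge, or a shared edge then determines this structure (the already known order of the shared edges lets one fill in the rest), and there are $\binom{2a+k_1}{a,a,k_1}$ such words, which is the first multinomial factor (note $2a=2n-2-2c_{AB}-2c_{ABC}$). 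The $k_3$ components of $G$ containing no $A\cap B$-edge consist of $AC$- and $BC$-edges only and thus sit among the $2a$ private edges of $A\cup B$; recording the position of the first edge of each such component and then extending it via $\tilde G$ costs at most $\binom{2a}{k_3}$, the second factor. I would then embed $A$ as a directed Hamiltonian path in $K_n$ ($n!$ choices), which, together with the above, pins down $A$ inside $K_n$, its edge-order, and the location of all common segments of $A$ and $B$, and extend the embedding to $B$: the $c_{AB}+c_{ABC}+k_1$ vertices on common segments with $A$ are then placed, and so are the $k_4$ vertices incident to an $A\cap C$- and a $B\cap C$-edge but to no $A\cap B$-edge (these lie on $A$, and by the neighbourhood classification of Figure \ref{fig:posneighbs} they are exactly the vertices of $B$ off the common segments of $A$ and $B$ that are nonetheless forced onto $A$), leaving the remaining $n-c_{AB}-c_{ABC}-k_1-k_4$ vertices of $B$ free, hence $(n-c_{AB}-c_{ABC}-k_1-k_4)!$ choices.

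Finally I would incorporate $C$. Its edges are the $AC$-, $BC$- and $ABC$-edges of $G$, already known and located (with known positions in $\preceq$ restricted to $A\cup B$), together with $n-1-c_{AC}-c_{BC}-c_{ABC}$ private edges. Since $C$ is increasing, a private $C$-edge can occur in the order only before the first common segment of $A\cup B$ and $C$, after the last, or between two distinct such segments; using the $k_2$ common segments one encodes the insertion of the private $C$-edges into the global order by a choice counted by the third binomial factor $\binom{3n-3-c_{AB}-2c_{AC}-2c_{BC}-3c_{ABC}+k_2}{\,n-1-c_{AC}-c_{BC}-c_{ABC}\,}$. Extending the embedding to $C$, the $c_{AC}+c_{BC}+c_{ABC}+k_2$ vertices on common segments with $A\cup B$ are placed, leaving $n-c_{AC}-c_{BC}-c_{ABC}-k_2$ free vertices of $C$, hence $(n-c_{AC}-c_{BC}-c_{ABC}-k_2)!$ choices. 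Multiplying the counts from all steps yields the asserted bound.

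The hard part will be the edge-order encodings — the three binomial/multinomial factors — namely verifying that the proposed words, together with the order already fixed by $G$, really do determine the abstract edge-ordered structures (so that these counts are genuine upper bounds), that they are consistent with the prescribed relative order of shared edges inherited from $\tilde G$, and that the ``floating'' $k_3$-components and the private $C$-edges can be reattached/inserted exactly as the recorded data prescribes; pinning down the precise form of the $C$-factor, which exploits that private $C$-edges fall only between distinct common segments of $A\cup B$ and $C$, is the most finicky point. A secondary obstacle is the vertex bookkeeping: confirming that the vertices forced onto $A$ (resp.\ onto $A\cup B$) are precisely those counted by $c_{AB}+c_{ABC}+k_1+k_4$ (resp.\ $c_{AC}+c_{BC}+c_{ABC}+k_2$), which rests on the classification of neighbourhoods of high-degree vertices of $\tilde G$ in Figure \ref{fig:posneighbs} and Claim \ref{claim:regbound}, and on the fact that private edges of distinct paths interact only through shared vertices, so that all such interactions are captured purely by the embedding steps.
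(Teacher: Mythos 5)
Your proposal is correct and follows essentially the same route as the paper's proof: the same five-part encoding (segment lengths giving $\prod_\alpha\binom{c_\alpha-1}{l_\alpha-1}$, the $A$/$B$ edge list giving the multinomial, locating one edge of each of the $k_3$ floating components, extending the edge list to $C$ with the $k_2$ adjacency restrictions, and the three factorial factors for the vertex sequences), with the same justification for each factor. The only difference is the immaterial reordering of when the embedding of $A$ and $B$ into $K_n$ is chosen relative to the $C$ edge-list step.
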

\begin{remark}
Note that $c_{AB}$, $c_{AC}$, $c_{BC}$ and $c_{ABC}$ are allowed to be $0$. In that case, we should think of the corresponding binomial factor ${c_\alpha -1 \choose l_\alpha - 1}$ as $1$ if $l_\alpha=0$ and $0$ otherwise, which is consistent with the interpretation of ${c_\alpha -1 \choose l_\alpha-1}$ as the number of ways to place $c_\alpha$ indistinguishable balls in $l_\alpha$ bins such that each bin contains at least one ball.
\end{remark}
\begin{proof}
In counting the number of edge-ordered triples $(A, B, C, \preceq)$ satisfying the conditions above, it is useful to consider what we refer to as an \emph{edge list}, first representing $A$ and $B$, and then extended to represent $A, B$ and $C$. The edge list of $A$ and $B$ is defined as a sequence of length $\abs{A\cup B}$ with elements ``\textsf{A}'', ``\textsf{B}'' and ``\textsf{AB}'' where the $i$:th character denotes which of $A$ and $B$ the $i$:th lowest ordered edge among $A\cup B$ is contained in. Similarly, the edge list of $A$, $B$ and $C$ is a sequence of length $\abs{A\cup B\cup C}$ with elements ``\textsf{A}'', ``\textsf{B}'', ``\textsf{C}'', ``\textsf{AB}'', ``\textsf{AC}'', ``\textsf{BC}'' and ``\textsf{ABC}'' where the $i$:th character denotes which of $A$, $B$ and $C$ the $i$:th lowest ordered edge among $A\cup B\cup C$ is contained in. This serves both to encode the edge-ordering of $A\cup B\cup C$ as well as the number of unique edges the paths have between common segments.
\\~\\
\textsc{Step 1:} Choose the common edge graph $G$.
\\~\\
It suffices to choose the lengths of each collapsed path in $\tilde{G}$. By Claim \ref{claim:GtildeGorder} we know that this uniquely defines the edge-ordering of $G$. For each $\alpha \in \{AB, AC, BC, ABC\}$ we need to divide up $c_\alpha$ edges between $l_\alpha$ segments. Hence, there are at most $\prod_\alpha {c_\alpha-1 \choose l_\alpha-1}$ ways to do this.
\\~\\
\textsc{Step 2:} Choose the edge list of $A$ and $B$.
\\~\\
Given $G$, we know the number of common segments of $A$ and $B$ and their respective lengths. As no edge unique to $A$ or $B$ can occur during one of the common segments, any edge list can, given $G$, be encoded as a string containing $\abs{A\setminus B} = n-1-c_{AB}-c_{ABC}$ \textsf{A}:s, $\abs{B\setminus A} = n-1-c_{AB}-c_{ABC}$ \textsf{B}:s, and $k_1$ \textsf{D}:s (where each \textsf{D} acts as a placeholder for a sequence of incident \textsf{AB}:s). Hence the number of such edge lists is at most the multinomial coefficient,
${2n-2-2c_{AB}-2c_{ABC}+k_1 \choose n-1-c_{AB}-c_{ABC}, n-1-c_{AB}-c_{ABC}, k_1}.$
\\~\\
\textsc{Step 3:} For each edge in $G$, locate the corresponding entry in the edge list of $A$ and $B$.
\\~\\
Note that each edge in $G$ is contained in at least one of $A$ and $B$, and is hence present in the edge list. The position of any edge common to $A$ and $B$ is immediately determined by the edge-ordering of $G$ -- the $i$:th common edge between $A$ and $B$ in $G$ corresponds to the $i$:th \textsf{AB} in the edge list. Moreover, if we know which position one edge in $G$ has in the edge list, then we also know the position any adjacent edge in $G$ as such an edge is either the previous or next edge in one of $A$ and $B$. Thus, the position of one edge implies the positions of all edges in the same component. Hence, it suffices to choose the location of one edge from each of the $k_3$ components in $G$ that does not contain a common edge of $A$ and $B$. As any such edge cannot be common to $A$ and $B$, this can be done in ${\abs{A\triangle B} \choose k_3} = {2n-2-2c_{AB}-2c_{ABC} \choose k_3}$ ways.
\\~\\
\textsc{Step 4:} Extend the edge list of $A$ and $B$ to an edge list of $A$, $B$ and $C$.
\\~\\
As we have already identified which positions in the edge list of $A$ and $B$ that correspond to edges common between $C$ and at least one of $A$ and $B$, we can just add a \textsf{C} to any such position. It  remains to insert $\abs{C\setminus(A\cup B)}=n-1-c_{AC}-c_{BC}-c_{ABC}$ many \textsf{C}:s into this list. There are at least $c_{AC}+c_{BC}+c_{ABC}-k_2$ pairs of adjacent elements in the list between which we cannot place any \textsf{C}:s as these correspond to adjacent edges in common segments between $A\cup B$ and $C$. Hence, the number of ways this extension can be made is at most ${\abs{A\cup B}+\abs{C\setminus(A\cup B)}-(c_{AC}+c_{BC}+c_{ABC}-k_2) \choose \abs{C\setminus(A\cup B)}} = {3n-3-c_{AB}-2c_{AC}-2c_{BC}-3c_{ABC}+k_2 \choose n-1-c_{AC}-c_{BC}-c_{ABC} }.$
\\~\\
\textsc{Step 5:} Choose the vertex sequences of $A, B$ and $C$.
\\~\\
There are $n!$ possibilities for the vertex sequence of $A$. Given $G$ and the edge-ordering of $\{A, B, C\}$ this determines $c_{AB}+c_{ABC}+k_1+k_4$ of the vertices along $B$, yielding at most $(n-c_{AB}-c_{ABC}-k_1-k_4)!$ options for the remaining vertices of $B$. Similarly, fixing $A$ and $B$, the remaining vertices along $C$ can be chosen in at most $(n-c_{AC}-c_{BC}-c_{ABC}-k_2)!$ ways.
\end{proof}

\begin{lemma}\label{lemma:factorialestimates}
For any non-negative integers $p\leq q \leq r$ where $r\geq 1$, we have
\begin{equation*}
\frac{(r-p)!}{(q-p)!} \leq \frac{r!}{q!} \left( \frac{q}{r}\right)^p \leq \frac{r!}{q!} \exp\left( -\frac{p(r-q)}{r}\right),
\end{equation*}
and
\begin{equation*}
\frac{(r-p)!}{r!} \leq \left(\frac{e}{r}\right)^p.
\end{equation*}
\end{lemma}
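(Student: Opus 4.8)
The statement to prove is Lemma~\ref{lemma:factorialestimates}, the elementary factorial inequalities. Here is how I would approach it.

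\textbf{First inequality.} The plan is to prove $\frac{(r-p)!}{(q-p)!} \leq \frac{r!}{q!}\left(\frac{q}{r}\right)^p$ by induction on $p$. The base case $p=0$ is trivial. For the inductive step, note that
$$\frac{(r-p-1)!}{(q-p-1)!} = \frac{(r-p)!}{(q-p)!}\cdot\frac{q-p}{r-p},$$
so it suffices to show $\frac{q-p}{r-p}\cdot\left(\frac{q}{r}\right)^p \leq \left(\frac{q}{r}\right)^{p+1}$, equivalently $\frac{q-p}{r-p}\leq\frac{q}{r}$. Cross-multiplying (all quantities positive since $p\le q\le r$ and we may assume $p<q$, the case $p=q$ being immediate), this reads $r(q-p)\leq q(r-p)$, i.e. $-rp\leq -qp$, i.e. $qp\leq rp$, which holds since $q\leq r$. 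Alternatively, and perhaps more cleanly, one can write $\frac{(r-p)!}{(q-p)!} = \prod_{j=1}^{r-q}(q-p+j)$ and $\frac{r!}{q!} = \prod_{j=1}^{r-q}(q+j)$, and then observe termwise that $\frac{q-p+j}{q+j}\leq \frac{q}{r}$ whenever $j\le r-q$ (this is again equivalent to $(q-p+j)r \le q(q+j)$, i.e. $jr - pr \le qj$, i.e. $j(r-q)\le pr$, which holds since $j\le r-q\le r \le pr$ when... hmm, need $p\geq 1$; for $p=0$ the claim is trivial). Multiplying the $r-q$ termwise bounds gives $\frac{(r-p)!}{(q-p)!}\le\frac{r!}{q!}(q/r)^{r-q}$, but we want exponent $p$, so the induction is the safer route. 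The second half of the first inequality, $\left(\frac{q}{r}\right)^p\leq\exp\left(-\frac{p(r-q)}{r}\right)$, follows by taking logs: it amounts to $\ln(q/r)\leq -\frac{r-q}{r} = \frac{q}{r}-1$, which is the standard bound $\ln x\leq x-1$ applied at $x=q/r$.

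\textbf{Second inequality.} For $\frac{(r-p)!}{r!}\leq\left(\frac{e}{r}\right)^p$, I would consider two regimes. If $p\le r$, then $\frac{(r-p)!}{r!} = \frac{1}{r(r-1)\cdots(r-p+1)} = \prod_{j=0}^{p-1}\frac{1}{r-j}$, and since each factor $\frac{1}{r-j}\leq\frac{1}{r-p+1}$... that is not obviously $\le e/r$. A better route: we want $r(r-1)\cdots(r-p+1)\geq (r/e)^p$, i.e. $\prod_{j=0}^{p-1}\frac{r-j}{r}\geq e^{-p}$, i.e. $\sum_{j=0}^{p-1}\ln(1-j/r)\geq -p$. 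Using $\ln(1-x)\geq -x/(1-x)$... cleaner is to recall $\binom{r}{p}\leq (er/p)^p$, hence $\frac{r!}{(r-p)!} = p!\binom{r}{p}$; combined with $p!\geq (p/e)^p$ this gives $\frac{r!}{(r-p)!}\geq (p/e)^p(r/p)^p\cdot$... wait, that direction needs $\binom{r}{p}\geq(r/p)^p$, which is true. So $\frac{r!}{(r-p)!} = p!\binom{r}{p}\geq (p/e)^p\cdot(r/p)^p = (r/e)^p$, giving exactly $\frac{(r-p)!}{r!}\leq(e/r)^p$. If instead $p>r$, then $(r-p)!$ is not defined by the usual convention (or the factorial of a negative number), but in the paper's usage $p\le q\le r$ always holds when this is applied, so I would simply state the inequality under the hypothesis $p\leq r$ (which is part of $p\le q\le r$) and not worry about $p>r$. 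The only inputs needed are the elementary bounds $k!\geq(k/e)^k$ and $\binom{r}{p}\geq(r/p)^p$, both of which follow from $\binom{r}{p} = \prod_{j=0}^{p-1}\frac{r-j}{p-j}$ with each factor $\ge r/p$, and from the series expansion $e^k=\sum k^m/m!\geq k^k/k!$.

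\textbf{Main obstacle.} There is no real obstacle here — this is a routine lemma. The only thing to be a little careful about is the degenerate cases ($p=q$, or $q=r$, or $p=0$, or $r=0$ versus $r\ge1$ as stated) to make sure no division by zero or ill-defined factorial sneaks in; the hypotheses $p\le q\le r$, $r\ge1$ are exactly what rules these out, so I would just check them explicitly at the start and then run the termwise/inductive argument above.
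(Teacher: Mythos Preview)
Your proof is correct. For the first inequality, your induction on $p$ is essentially the paper's argument unwound one factor at a time: the paper writes
\[
\frac{q!}{(q-p)!}\cdot\frac{(r-p)!}{r!} = \frac{q-p+1}{r-p+1}\cdot\frac{q-p+2}{r-p+2}\cdots\frac{q}{r}
\]
and bounds each of the $p$ factors by $q/r$; your inductive step verifies exactly the same termwise inequality $\frac{q-p}{r-p}\le\frac{q}{r}$. (Your aborted ``alternative'' with $r-q$ factors was the wrong decomposition; the paper's product has $p$ factors, which is why the exponent comes out right.) The exponential bound via $\ln x\le x-1$ is identical in both.

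For the second inequality your route is genuinely different. The paper compares the sum $\sum_{t=r-p+1}^{r}(-\ln t)$ to the integral $\int_{r-p}^{r}(-\ln t)\,dt$ and evaluates. You instead factor $\frac{r!}{(r-p)!}=p!\binom{r}{p}$ and combine the elementary bounds $p!\ge(p/e)^p$ and $\binom{r}{p}\ge(r/p)^p$. Both are short and self-contained; the paper's integral argument is slightly more direct, while yours has the virtue of reusing standard binomial estimates that appear elsewhere in this kind of analysis. Either is perfectly adequate for a routine lemma like this.
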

\begin{proof}
We have
\begin{equation*}
\frac{q!}{(q-p)!} \frac{(r-p)!}{r!} = \frac{q-p+1}{r-p+1}\cdot\frac{q-p+2}{r-p+2}\dots \frac{q}{r} \leq \left(\frac{q}{r}\right)^p,
\end{equation*}
where, by convexity of the exponential function, $\frac{q}{r}=1-\frac{r-q}{r}\leq e^{-\frac{r-q}{r}}$. Moreover,
\begin{equation*}
\begin{split}
\ln \left( \frac{(r-p)!}{r!}\right) &= \sum_{t=r-p+1}^r -\ln t\\ &\leq \int_{r-p}^r -\ln t\,dt = (r-p)\ln(r-p) - r \ln r + p\\ &\leq (r-p)\ln r - r\ln r + p = p(1-\ln r). 
\end{split}
\end{equation*}
\end{proof}

We are now ready to derive an upper bound on $\E X^3$ that shows that the quantity is of order $n^3$ and further lets us identify the dominating contribution. We define a reduced common edge graph $\tilde{G}$ as \emph{good} if each of its components consists of a single edge which is common to precisely two paths.

\begin{proposition}\label{prop:EX3On3}
We have $\E X^3 = O(n^3)$.  Moreover, for any integer $M\geq 0$, the contribution to the sum 
\begin{equation*}
\E X^3 = \sum_{\tilde{G}} \sum_{\bar{c}} t_n(\tilde{G}, \bar{c})
\end{equation*}
from all pairs $(\tilde{G}, \bar{c})$ such that either $\tilde{G}$ is not good or $c_{AB}+c_{AC}+c_{BC}>M$ is $O(n^3)e^{-\Omega(M)} + O(n^2)$.
\end{proposition}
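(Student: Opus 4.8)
The plan is to substitute the estimate of Proposition~\ref{prop:trip} into $t_n(\tilde G,\bar c)=\Trip_n(\tilde G,\bar c)/\abs{A\cup B\cup C}!$, where $\abs{A\cup B\cup C}=3n-3-c_{AB}-c_{AC}-c_{BC}-2c_{ABC}$, and to estimate the resulting ratio of factorials with Lemma~\ref{lemma:factorialestimates}. Since that estimate is only efficient when either $c_{ABC}$ is close to $n$ or $b:=n-1-c_{AC}-c_{BC}-c_{ABC}$ is of order $n$, I first reduce to these two regimes. In $\E X^3=\sum_{(A,B,C,\preceq)}1/\abs{A\cup B\cup C}!$ both the weight and the predicate ``$\tilde G$ is not good or $c_{AB}+c_{AC}+c_{BC}>M$'' are invariant under permutations of $(A,B,C)$, and ``$\tilde G$ good'' forces $c_{ABC}=0$. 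So, fixing a small $\varepsilon>0$ and applying Claim~\ref{claim:togetherorapart}, the sum over the bad pairs is at most $S_{\mathrm{tog}}+6\,S_{\mathrm{apt}}$, where $S_{\mathrm{tog}}$ runs over edge-ordered triples with $c_{ABC}\ge(1-18\varepsilon)n$ (all of which are bad, such a $\tilde G$ being non-good) and $S_{\mathrm{apt}}$ runs over bad edge-ordered triples with $c_{AC}+c_{BC}+c_{ABC}\le(1-\varepsilon)n$, so $b\ge\varepsilon n-1$.

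For $S_{\mathrm{apt}}$ I organize the sum as $\sum_{\tilde G}\sum_{\bar c}$ and plug Proposition~\ref{prop:trip} into $t_n$. Writing $l_\alpha=l_\alpha(\tilde G)$ and $k_i=k_i(\tilde G)$, the multinomial of Step~2 and the binomial of Step~4 in that proof each contribute a large factorial that cancels against $\abs{A\cup B\cup C}!$ and against one another; estimating the surviving ratios of factorials with Lemma~\ref{lemma:factorialestimates} (with $a=n-1-c_{AB}-c_{ABC}$ and $b$ of order $n$ in the relevant sub-range), I expect a bound of the form
\[
t_n(\tilde G,\bar c)\ \le\ C^{k_1+k_2+k_4}\left(\prod_{\alpha}\binom{c_\alpha-1}{l_\alpha-1}\right)q^{c_{AB}+c_{AC}+c_{BC}}\,n^{3-d(\tilde G)}
\]
in the sub-range $c_{AB}+c_{AC}+c_{BC}\le\delta n$, for absolute constants $C\ge1$, $q\in(0,1)$, $\delta>0$, where $d(\tilde G)\ge0$ equals $0$ exactly when $\tilde G$ is good and is $\ge1$ otherwise --- each $ABC$-edge and each degree-$\ge2$ vertex of $\tilde G$ costing a factor $\Theta(1/n)$. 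When $c_{AB}+c_{AC}+c_{BC}$ is of order $n$ with every $c_\alpha$ bounded away from $n$, the same manipulation gives $t_n\le e^{-\Omega(n)}$; the only remaining pairs are those with some $c_\alpha$ within $O(1)$ of $n$ (e.g.\ $c_{AB}=n-1$, i.e.\ $A=B$, contributing an $\E X^2$-type term $O(n^2)$), and there are $O(1)$ of these per $\tilde G$.

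Now sum. For fixed $\tilde G$, $\sum_{\bar c}\left(\prod_{\alpha}\binom{c_\alpha-1}{l_\alpha-1}\right)q^{c_{AB}+c_{AC}+c_{BC}}=\prod_{\alpha}\left(q/(1-q)\right)^{l_\alpha}$ by the identity $\sum_{c\ge l}\binom{c-1}{l-1}x^c=(x/(1-x))^l$, and the portion with $c_{AB}+c_{AC}+c_{BC}>M$ is smaller by $e^{-\Omega(M)}$ (the terms $\binom{c-1}{l-1}q^c$ are geometrically small in $c$ once $c\gg l$, and large values of $\sum_\alpha l_\alpha=O(k_1+k_2+k_4)$ are suppressed by $1/l!$-type factors). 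By Claim~\ref{claim:regbound} there are $e^{O(k_1+k_2+k_4)}$ reduced common edge graphs with given $(k_1,k_2,k_4)$, each with $O(k_1+k_2+k_4)$ edges, so after enlarging $q$ slightly to absorb the $C^{k_1+k_2+k_4}$ and this count, the sum over $\tilde G$ converges. The ``together'' sum $S_{\mathrm{tog}}$ is handled the same way but more easily: $c_{ABC}\ge(1-18\varepsilon)n$ forces $a,b=O(\varepsilon n)$ and $\abs{A\cup B\cup C}!$ close to $n!$, making $t_n$ polynomially small after cancellation, and $S_{\mathrm{tog}}=O(n^2)$ (its extreme cases $A=B=C$ and $A=B\neq C$ give the $\E X=n$ and $\E X^2=O(n^2)$ contributions). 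Putting this together: $\E X^3=O(n^3)$; the contribution of the non-good $\tilde G$ --- which have $d(\tilde G)\ge1$, together with the $O(1)$-per-$\tilde G$ ``$c_\alpha$ near $n$'' leftovers and $S_{\mathrm{tog}}$ --- is $O(n^2)$; and further restricting to $c_{AB}+c_{AC}+c_{BC}>M$ loses a factor $e^{-\Omega(M)}$ on the $O(n^3)$ remainder, so the total bad contribution is $O(n^3)e^{-\Omega(M)}+O(n^2)$.

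The hard part will be the second paragraph --- carrying out the cancellations in $t_n=\Trip_n/\abs{A\cup B\cup C}!$ precisely enough to see both that a good reduced common edge graph sits exactly at the critical power $n^3$ while each defect strictly lowers it, and that the surviving factorial ratios, estimated through Lemma~\ref{lemma:factorialestimates}, decay geometrically in $c_{AB}+c_{AC}+c_{BC}$ at a rate fast enough to beat the $e^{O(k_1+k_2+k_4)}$ count of graphs. Proposition~\ref{prop:trip} bundles seven factors whose dominant exponential parts nearly annihilate, so the polynomial and sub-exponential corrections must be tracked carefully, especially near the boundary $c_\alpha\approx n$ where Lemma~\ref{lemma:factorialestimates} degrades.
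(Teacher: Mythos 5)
Your outline follows the paper's strategy closely: decompose via Claim \ref{claim:togetherorapart}, bound $t_n=\Trip_n/\abs{A\cup B\cup C}!$ through Proposition \ref{prop:trip} and Lemma \ref{lemma:factorialestimates}, sum over $\bar c$ with $\sum_c{c-1\choose l-1}z^c=(z/(1-z))^l$, and sum over $\tilde G$ with Claim \ref{claim:regbound}, the deficiency index $d(\tilde G)$ playing the role of the paper's $k_2+k_4-k_3$. However, there is a genuine gap in your second paragraph. Your $S_{\mathrm{apt}}$ only constrains $c_{AC}+c_{BC}+c_{ABC}\leq(1-\varepsilon)n$; it leaves $\abs{A\cap B}=c_{AB}+c_{ABC}$ completely free. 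Your dichotomy for the large-$\bar c$ terms --- either every $c_\alpha$ is bounded away from $n$ (giving $e^{-\Omega(n)}$ ``by the same manipulation'') or some $c_\alpha$ is within $O(1)$ of $n$ (an $O(1)$-per-$\tilde G$ leftover) --- is a false dichotomy: it misses the entire intermediate range $1\ll n-c_{AB}-c_{ABC}\ll n$, e.g.\ $c_{AB}=n-\sqrt n$. In that range the multinomial estimate coming from the first part of Lemma \ref{lemma:factorialestimates} (the analogue of \eqref{eq:EX3On3binest1}) has $r-q=n-1-c_{AB}-c_{ABC}=o(n)$ and therefore yields decay only $e^{-O(c_{AB}(n-c_{AB}-c_{ABC})/n)}=e^{-o(c_{AB})}$; neither the factor $q^{c_{AB}}$ in your displayed bound nor the claimed $e^{-\Omega(n)}$ survives, so these terms are simply not bounded by your argument.

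This missing regime is exactly the paper's $S_2$ ($\abs{A\cap B}>(1-\varepsilon)n$ but $\abs{(A\cup B)\cap C}\leq(1-\varepsilon)n$), and it requires genuinely different ingredients: the binomials are bounded crudely by $3^{2d_{AB}-2+k_1}$ and $2^{2d_{AB}-2}$ where $d_{AB}=n-c_{AB}-c_{ABC}$, the factorial ratio $n!(d_{AB}-k_1-k_4)!/(n+d_{AB}-2)!$ is bounded by $\varepsilon^{d_{AB}-k_1-k_4}n^{2-k_1-k_4}$ so as to produce a geometric factor $(36\varepsilon)^{d_{AB}}$ (forcing $\varepsilon<1/36$), and --- crucially --- one needs the structural fact that $k_1+k_2\geq l_{AB}$ for every reduced common edge graph (Claim \ref{claim:S2}); without it the resulting sum $\sum_{\tilde G}e^{O(k_1+k_2+k_4)}/\bigl((l_{AB}-1)!\,n^{k_1+k_2+k_4-l_{AB}}\bigr)$ could carry positive powers of $n$. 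Your treatment of $S_{\mathrm{tog}}$ is likewise much thinner than the paper's $S_3$ (which needs $2d_{ABC}-c_{AB}-c_{AC}-c_{BC}\geq d_{ABC}/2$, a factor $(288\sqrt{18\varepsilon})^{d_{ABC}}$, and the companion claim $k_1+k_2\geq l_{ABC}$), but the $S_2$ omission is the substantive one: as written, your proof does not control triples in which $A$ and $B$ are nearly, but not essentially, identical.
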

\begin{proof}
Fix $\varepsilon>0$ sufficiently small ($\varepsilon \leq 1500000^{-1}$ suffices). By Claim \ref{claim:togetherorapart} we know that we can bound $\E X^3 = \sum_{(A, B, C, \preceq)} \frac{1}{\abs{A\cup B\cup C}!}$ by $3S_1+3S_2+S_3$, where $S_1$ is the contribution to this sum from all edge-ordered triples where $$\abs{A\cap B}\leq (1-\varepsilon)n\qquad\text{and}\qquad\abs{(A\cup B)\cap C}\leq (1-\varepsilon)n,$$ $S_2$ is the contribution where $$\abs{A\cap B} > (1-\varepsilon)n\qquad\text{and}\qquad\abs{(A\cup B)\cap C}\leq (1-\varepsilon)n,$$ and $S_3$ is the contribution where $\abs{A\cap B\cap C}>(1-18\varepsilon)n$.

Let us start by estimating $S_1$. We have
\begin{equation*}
S_1 = \sum_{\tilde{G}} \sum_{\substack{c_{AB}+c_{ABC}\leq (1-\varepsilon)n\\ c_{AC}+c_{BC}+c_{ABC}\leq (1-\varepsilon)n}} \trip_n(\tilde{G}, \bar{c}).
\end{equation*}
To bound the summand on the right we apply Proposition \ref{prop:trip}. Note that we can assume that $c_{AB}+c_{ABC}-k_1 \geq 0$ and $c_{AC}+c_{BC}+c_{ABC}-k_2 \geq 0$, as otherwise $t_n(\tilde{G}, \bar{c})=0$. By the first part of Lemma \ref{lemma:factorialestimates} using $p=c_{AB}+c_{ABC}-k_1$, $q=n-1-k_1$ and $r=2n-2-c_{AB}-c_{ABC}$ we have, for any $c_{AB}, c_{AC}, c_{BC}, c_{ABC}$ as in the above sum,
\begin{equation}
\begin{split}\label{eq:EX3On3binest1}
&{2n-2-2c_{AB}-2c_{ABC}+k_1 \choose n-1-c_{AB}-c_{ABC}, n-1-c_{AB}-c_{ABC}, k_1}\\
&\qquad \leq {2n-2-c_{AB}-c_{ABC} \choose n-1-c_{AB}-c_{ABC}, n-1-k_1, k_1} e^{-\Omega_\varepsilon(c_{AB}+c_{ABC}-k_1)}
\end{split}
\end{equation}
and using $p=c_{AC}+c_{BC}+c_{ABC}-k_2$, $q=2n-2-c_{AB}-c_{ABC}$ and $r=3n-3-c_{AB}-c_{AC}-c_{BC}-2c_{ABC}$
\begin{equation}
\begin{split}\label{eq:EX3On3binest2}
&{3n-3-c_{AB}-2c_{AC}-2c_{BC}-3c_{ABC}+k_2 \choose n-1-c_{AC}-c_{BC}-c_{ABC} }\\
&\qquad \leq {3n-3-c_{AB}-c_{AC}-c_{BC}-2c_{ABC} \choose n-1-c_{AC}-c_{BC}-c_{ABC}} e^{-\Omega_\varepsilon(c_{AC}+c_{BC}+c_{ABC}-k_2)}.
\end{split}
\end{equation}
Here $f=\Omega_\varepsilon(g)$ denotes that there is some constant $M_\varepsilon>0$, possibly depending on $\varepsilon$, such that $f\geq M_\varepsilon g.$

Plugging \eqref{eq:EX3On3binest1} and \eqref{eq:EX3On3binest2} into the estimate for $t_n(\tilde{G}, \bar{c}) = T_n(\tilde{G}, \bar{c})/(3n-3-c_{AB}-c_{AC}-c_{BC}-2c_{ABC})!$ given by Proposition \ref{prop:trip} and cancelling out identical factorials in the numerator and denominator we get
\begin{equation}
\begin{split}\label{eq:tripfactorialsimp}
&t_n(\tilde{G}, \bar{c}) \leq \left( \prod_\alpha {c_\alpha-1 \choose l_\alpha-1} \right){2n-2-2c_{AB}-2c_{ABC} \choose k_3} \frac{e^{-\Omega_{\varepsilon}(c_{AB}+c_{AC}+c_{BC}+2c_{ABC}-k_1-k_2)}}{k_1!} \cdot\\
&\qquad \cdot \frac{n!}{(n-1-k_1)!} \frac{(n-c_{AB}-c_{ABC}-k_1-k_4)!}{(n-1-c_{AB}-c_{ABC})!} \frac{ (n-c_{AC}-c_{BC}-c_{ABC}-k_2)!}{(n-1-c_{AC}-c_{BC}-c_{ABC})!}.
\end{split}
\end{equation}

We have the bounds $${2n-2-2c_{AB}-2c_{ABC} \choose k_3} \leq \frac{(2n)^{k_3}}{k_3!} = \frac{n^{k_3} e^{O(k_3)}}{k_3!},$$ $$\frac{n!}{(n-1-k_1)!} \leq n^{k_1+1}$$ and, by the second part of Lemma \ref{lemma:factorialestimates}, \begin{equation}\frac{(n-c_{AB}-c_{ABC}-k_1-k_4)!}{(n-1-c_{AB}-c_{ABC})!}\leq n^{1-k_1-k_4} e^{O_\varepsilon(k_1+k_4)}\end{equation} and
\begin{equation}\label{eq:factorialAcupBcapC}\frac{ (n-c_{AC}-c_{BC}-c_{ABC}-k_2)!}{(n-1-c_{AC}-c_{BC}-c_{ABC})!} \leq n^{1-k_2} e^{O_\varepsilon(k_2)},
\end{equation}
where $f=O_\varepsilon (g)$ denotes that there exists some constant $M_\varepsilon>0$ such that $\abs{f}\leq M_\varepsilon\abs{g}$. Note that Lemma \ref{lemma:factorialestimates} strictly speaking only applies to the last two inequalities when $k_1+k_4\geq 1$ and $k_2\geq 1$ respectively, but it is clear that the inequalities also hold for $k_1+k_4=0$ and $k_2=0$, as desired.

Plugging this into \eqref{eq:tripfactorialsimp} we get
\begin{equation}\label{eq:S1summandbound}
t_n(\tilde{G}, \bar{c}) = O(n^3) \left( \prod_\alpha{c_\alpha-1\choose l_\alpha-1} e^{-\Omega_\varepsilon(c_\alpha)} \right) \frac{ e^{O_\varepsilon(k_1+k_2+k_3+k_4)} }{k_1! k_3! n^{k_2+k_4-k_3}}.
\end{equation}
Using the Taylor expansion $\frac{z^{l}}{(1-z)^l} = \sum_{c} {c-1\choose l-1} z^c$ which is convergent for any $\abs{z}<1$, it follows that $\sum_{c_\alpha} {c_\alpha-1 \choose l_\alpha-1} e^{-\Omega_\varepsilon(c_\alpha)} \leq e^{O_\varepsilon(l_\alpha)}.$ By the second part of Claim \ref{claim:regbound} we know that $\sum_\alpha l_\alpha = O(k_1+k_2+k_4)$. Hence
\begin{equation}\label{eq:S1summedoverc}
S_1 = O(n^3) \sum_{\tilde{G}} \frac{e^{O_\varepsilon(k_1+k_2+k_3+k_4)}}{k_1!k_3!n^{k_2+k_4-k_3}}.
\end{equation}
By the definitions of $k_2, k_3$ and $k_4$, equations \eqref{eq:defk2}--\eqref{eq:defk4}, one can see that $k_2+k_4-k_3$ is at least the number of components of $\tilde{G}$ that either consist of more than one edge, or contain an $ABC$-edge. In particular, $k_2+k_4-k_3$ is non-negative, and zero only if $\tilde{G}$ is good. Hence, letting $k_5=k_2+k_4-k_3$ and noting that $k_4 \leq k_5+k_3 \leq e^{O_\varepsilon(k_5+k_3)}$, it follows by Claim \ref{claim:regbound} that
\begin{align*}
\sum_{\tilde{G}} \frac{e^{O_\varepsilon(k_1+k_2+k_3+k_4)}}{k_1!k_3!n^{k_2+k_4-k_3}} &\leq \sum_{k_5=0}^\infty \sum_{k_1=0}^\infty \sum_{k_3=0}^\infty  \sum_{k_4=0}^{k_3+k_5} \frac{e^{O_\varepsilon(k_1+k_5+2k_3)}}{k_1!k_3! n^{k_5}}\\
&\leq \left(\sum_{k_5=0}^\infty \frac{e^{O_\varepsilon(2k_5)}}{n^{k_5}}\right)\left(
\sum_{k_1=0}^\infty \frac{ e^{O_\varepsilon(k_1)}}{k_1!}\right)\left( \sum_{k_3=0}^\infty \frac{e^{O_\varepsilon(3k_3)}}{k_3!}\right)=O_\varepsilon(1).
\end{align*}
Hence $S_1=O_\varepsilon(n^3)$. Moreover, bounding the contribution from non-good $\tilde{G}$, in which case we start $k_5$ at $1$, gives us $O_\varepsilon(n^2)$, as desired.


We now turn to $S_2$. Let $d_{AB} = n-c_{AB}-c_{ABC}$. Note that $d_{AB}\geq 1$.  Proceeding as for $S_1$, except for that instead of \eqref{eq:EX3On3binest1} we use the bounds
$${2d_{AB}-2+k_1 \choose d_{AB}-1, d_{AB}-1, k_1} \leq 3^{2d_{AB}-2+k_1},$$
and
$${2d_{AB}-2 \choose k_3}\leq 2^{2d_{AB}-2},$$
we get after cancelling identical factorials in the numerator and denominator that
\begin{align*}
t_n(\tilde{G}, \bar{c}) &\leq \left(\prod_\alpha {c_\alpha -1 \choose l_\alpha -1}\right) 3^{2d_{AB}-2+k_1}2^{2d_{AB}-2} e^{\Omega_\varepsilon(c_{AC}+c_{BC}+c_{ABC}-k_2)}\cdot \\
&\qquad \cdot \frac{n!(d_{AB}-k_1-k_4)}{(n+d_{AB}-2)!}\frac{(n-c_{AC}-c_{BC}-c_{ABC}-k_2)!}{(n-1-c_{AC}-c_{BC}-c_{ABC})!}
\end{align*}
This can be simplified using 
$${c_{AB}-1\choose l_{AB}-1} \leq \frac{n^{l_{AB}-1}}{(l_{AB}-1)!},$$
$$\frac{n! (d_{AB}-k_1-k_4)!}{(n+d_{AB}-2)!}\leq \frac{d_{AB}^{d_{AB}-k_1-k_4}}{n^{d_{AB}-2}} \leq \varepsilon^{d_{AB}-k_1-k_4} n^{2-k_1-k_4},$$
and equation \eqref{eq:factorialAcupBcapC} to
\begin{align*}
t_n(\tilde{G}, \bar{c}) &\leq \frac{n^{l_{AB}-1}}{(l_{AB}-1)!} \left( \prod_{\alpha\neq AB} {c_\alpha-1 \choose l_\alpha-1} e^{-\Omega_\varepsilon(c_\alpha)}\right) (36\varepsilon)^{d_{AB}} e^{O_\varepsilon(k_1+k_2+k_4)} n^{2-k_1-k_4} n^{1-k_2} \\
&= n^2 \left( \prod_{\alpha\neq AB} {c_\alpha-1 \choose l_\alpha-1}e^{-\Omega_\varepsilon(c_\alpha)}\right) \frac{ (36\varepsilon)^{d_{AB}}  e^{O_\varepsilon(k_1+k_2+k_4)}}{(l_{AB}-1)! n^{k_1+k_2+k_4-l_{AB}}}.
\end{align*}
Note that the edge-ordered triples counted in $S_2$ all satisfy $\abs{A\cap B}>\abs{(A\cup B)\cap C}$, and so there must be at least one edge common to both $A$ and $B$ which is not contained in $C$. Hence for any $\tilde{G}$ considered here, $l_{AB}\geq 1$. Assuming $\varepsilon<\frac{1}{36}$ it follows by summing over $d_{AB}, c_{AC}, c_{BC}, c_{ABC}$ that
\begin{equation}\label{eq:S2}
S_2 = O_\varepsilon(n^2) \sum_{\tilde{G}} \frac{e^{O_\varepsilon(k_1+k_2+k_4)}}{(l_{AB}-1)!n^{k_1+k_2+k_4-l_{AB}}}.
\end{equation}

To bound this sum, we need the following claim.
\begin{claim}\label{claim:S2}
For any reduced common edge graph $\tilde{G}$, $k_1+k_2\geq l_{AB}$.
\end{claim}\begin{proof}
Let $e$ be an edge in $\tilde{G}$ labelled $AB$. Either $e$ is the lowest ordered edge in a common segment of $A$ and $B$, in which case it contributed $1$ to $k_1$, or there is some preceding edge $e'$ in that common segment. As no two adjacent edges in $\tilde{G}$ can be contained in the exact same paths, $e'$ must be labelled $ABC$. Then $e'$ must be last edge in a common segment of $A\cup B$ and $C$, and hence contribute $1$ to $k_2$. This is because a succeeding edge $e''$ in the common segment of $A\cup B$ and $C$ labelled, say, $AC$, cannot be connected to the common end-point of $e$ and $e'$ as this would mean that $A$ contains three edges incident to this vertex, and it cannot be connected to the other end-point of $e'$ as then the path $e''e'e$ would be contained in $A$ which is not monotone.
\end{proof}

By combining Claims \ref{claim:regbound} and \ref{claim:S2} it follows that the number of reduced edge graphs corresponding to given values of $k_1, k_2, k_4$ and $l_{AB}$ is at most $e^{O(k_1+k_2+k_4)}$ if $l_{AB}\leq k_1+k_2$ and $0$ otherwise, which means that
\begin{align*}
&\sum_{\tilde{G}} \frac{e^{O_\varepsilon(k_1+k_2+k_4)}}{(l_{AB}-1)!n^{k_1+k_2+k_4-l_{AB}}} \leq \sum_{l_{AB}=1}^\infty \frac{e^{O_\varepsilon(l_{AB})}}{(l_{AB}-1)!} \sum_{\substack{k_1,k_2\geq 0\\k_1+k_2=l_{AB}}} \frac{e^{O_\varepsilon(k_1+k_2-l_{AB})}}{n^{k_1+k_2-l_{AB}}} \sum_{k_4=0}^\infty \frac{e^{O_\varepsilon(k_4)}}{n^{k_4}}\\
&\qquad \stackrel{s=k_1+k_2-l_{AB}}{=} \sum_{l_{AB}=1}^\infty \frac{e^{O_\varepsilon(l_{AB})}}{(l_{AB}-1)!} \sum_{s=0}^\infty \frac{(l_{AB}+1+s) e^{O_\varepsilon(s)}}{n^{s}} \sum_{k_4=0}^\infty \frac{e^{O_\varepsilon(k_4)}}{n^{k_4}}\\
&\qquad = \sum_{l_{AB}=1}^\infty \frac{O_\varepsilon(1+l_{AB}) e^{O_\varepsilon(l_{AB})}}{(l_{AB}-1)!} = O_\varepsilon(1),
\end{align*}
as desired.

Finally, we wish to bound
\begin{equation*}
S_3 = \sum_{\tilde{G}} \sum_{c_{ABC} > (1-18\varepsilon)n} t_n(\tilde{G}, \bar{c}).
\end{equation*}
Let $d_{ABC}=n-c_{ABC}<18\varepsilon n$. Using the inequalities ${a+b\choose a}\leq 2^{a+b}$ and ${a+b+c\choose a, b, c}\leq 3^{a+b+c}$ to bound the last two binomial coefficients and the trinomial coefficient in Proposition \ref{prop:trip}, we get
\begin{equation}
\begin{split}\label{eq:S3}
&t_n(\tilde{G}, \bar{c}) \leq \frac{n^{l_{ABC}-1}}{(l_{ABC}-1)!} \left(\prod_{\alpha\neq ABC} {c_\alpha - 1\choose l_\alpha - 1} \right) 3^{2d_{ABC}-2-2c_{AB}+k_1} 2^{2d_{ABC}-2-2c_{AB}} \cdot\\
&\quad \cdot 2^{3d_{ABC}-3-c_{AB}-2c_{AC}-2c_{BC}+k_2} \frac{n!(d_{ABC}-c_{AB}-k_1-k_4)! (d_{ABC}-c_{AC}-c_{BC}-k_2)!}{(n+2d_{ABC}-3-c_{AB}-c_{AC}-c_{BC})!}.
\end{split}
\end{equation}
Note that, assuming $\varepsilon<\frac{1}{18}$, any edge-ordered triplet counted in $S_3$ must have one edge common between all paths. Hence we may assume $l_{ABC}$ is always at least $1$ here. As $n+2d_{ABC}-3-c_{AB}-c_{AC}-c_{BC} = \abs{A\cup B\cup C}\geq n-1$ and $d_{ABC}-c_{AB}-k_1-k_4, d_{ABC}-c_{AC}-c_{BC}-k_2 \leq d_{ABC} < 18\varepsilon n$ we get
\begin{align*}
&\frac{n!(d_{ABC}-c_{AB}-k_1-k_4)! (d_{ABC}-c_{AC}-c_{BC}-k_2)!}{(n+2d_{ABC}-3-c_{AB}-c_{AC}-c_{BC})!}\\
&\qquad \leq \frac{(d_{ABC})^{d_{ABC}-c_{AB}-k_1-k_4+d_{ABC}-c_{AC}-c_{BC}-k_2}}{n^{2d_{ABC}-3-c_{AB}-c_{AC}-c_{BC}}}\\
&\qquad \leq (18\varepsilon)^{2d_{ABC}-c_{AB}-c_{AC}-c_{BC}-k_1-k_2-k_4} n^{3-k_1-k_2-k_4}.
\end{align*}

Plugging this into \eqref{eq:S3}, we get
\begin{align*}
&t_n(\tilde{G}, \bar{c}) \leq \frac{n^{l_{ABC}-1}}{(l_{ABC}-1)!} \left(\prod_{\alpha\neq ABC} {c_\alpha-1 \choose l_\alpha-1} 4^{-c_\alpha}\right) 288^{d_{ABC}}\\
&\qquad \cdot (18\varepsilon)^{2d_{ABC}-c_{AB}-c_{AC}-c_{BC}} n^{3-k_1-k_2-k_4} e^{O_\varepsilon(k_1+k_2+k_4)},
\end{align*}
Note that $d_{ABC}-c_{AB}-c_{AC} -1 \geq 0$ as the left side denotes the number of edges of $A\setminus(B\cup C)$, and similarly $d_{ABC}-c_{AB}-c_{BC} -1 \geq 0$ and $d_{ABC}-c_{AC}-c_{BC} -1 \geq 0$. Hence $3d_{ABC}-2c_{AB}-2c_{AC}-2c_{BC} \geq 0$ and thus $2d_{ABC}-c_{AB}-c_{AC}-c_{BC} \geq \frac{d_{ABC}}{2}$. It follows that
\begin{equation*}
\begin{split}
&t_n(\tilde{G}, \bar{c}) \leq n^2 \left(\prod_{\alpha\neq ABC} {c_\alpha-1 \choose l_\alpha-1} 4^{-c_\alpha}\right) (288\sqrt{18 \varepsilon})^{d_{ABC}}\\
&\qquad \cdot \frac{ e^{O_\varepsilon(k_1+k_2+k_4)} }{(l_{ABC}-1)! n^{k_1+k_2+k_4-l_{ABC}}},
\end{split}
\end{equation*}
and thus, assuming $\varepsilon < \frac{1}{288^2\cdot 18}$, we have
\begin{equation*}
S_3 = O_\varepsilon(n^2) \sum_{\tilde{G}} \frac{ e^{O_\varepsilon(k_1+k_2+k_4)} }{(l_{ABC}-1)! n^{k_1+k_2+k_4-l_{ABC}}}.
\end{equation*}

Note that this is identical to the bound we got for $S_2$ except that $l_{AB}$ is replaced by $l_{ABC}$. Hence proceeding as for $S_2$, the following claim implies that $S_3 = O_\varepsilon(n^2)$. This completes the proof that $\E X^3 = O(n^3)$.
\begin{claim}
For any reduced common edge graph $\tilde{G}$, $k_1+k_2\geq l_{ABC}$.
\end{claim}\begin{proof}
Let $e\in\tilde{G}$ be labelled $ABC$. We show that $e$ must either be the first edge in that common segment of $A$ and $B$ or the first edge in that common segment of $A\cup B$ and $C$. Assume that $e'$ is a preceding edge in the common segment of $A$ and $B$, and $e''$ is a preceding edge in the common segment of $A\cup B$ and $C$. Clearly $e'$ is labelled $AB$, and we may assume that $e''$ is labelled $AC$. Then $e'$ and $e''$ cannot connect to the same end-point of $e$ as this would mean that $A$ traverses three edges incident to this vertex, and $e'$ and $e''$ cannot connect to opposite end-points of $e$, as this would mean that $e'ee''$ is a path contained in $A$ which is not monotone.
\end{proof}


It remains to prove the second statement in Proposition \ref{prop:EX3On3}. From the preceding argument we know that the contribution to $\E X^3$ from non-good reduced common edge graphs, and from $\bar{c}$ such that $c_{AB}+c_{AC}+c_{BC} > M$ is at most
\begin{equation}\label{eq:S1prime}
O_\varepsilon(n^2) + 3 \sum_{\tilde{G}} \sum_{\substack{c_{AB}+c_{ABC}\leq (1-\varepsilon)n\\ c_{AC}+c_{BC}+c_{ABC}\leq (1-\varepsilon)n\\c_{AB}+c_{AC}+c_{BC} >M}} \trip_n(\tilde{G}, \bar{c}),
\end{equation}
where the summand is bounded by \eqref{eq:S1summandbound}. For $c_{AB}+c_{AC}+c_{BC}>M$, we can modify the bound in \eqref{eq:S1summandbound} slightly by ``moving half of each factor $e^{-\Omega_\varepsilon(c_\alpha)}$ outside the product over $\alpha$'' to get
\begin{equation*}
\trip_n(\tilde{G}, \bar{c}) \leq n^3 e^{-\Omega_\varepsilon(M/2)} \left( \prod_\alpha{c_\alpha-1\choose l_\alpha-1} e^{-\Omega_\varepsilon(c_\alpha/2)} \right) \frac{ e^{O_\varepsilon(k_1+k_2+k_3+k_4)} }{k_3! n^{k_2+k_4-k_3}}.
\end{equation*}
Note that, up to the implicit constants in the $\Omega_\varepsilon$-notation, this is equivalent to the bound in \eqref{eq:S1summandbound} except that we have an additional factor $e^{-\Omega_\varepsilon(M/2)}$. Hence preceding as before with the bound for $S_1$, it follows that \eqref{eq:S1prime} is bounded by $O_\varepsilon(n^3)e^{-\Omega_\varepsilon(M/2)}$, as desired.
\end{proof}



It remains to investigate the contribution from good reduced common edge graphs where the entries of $\bar{c}$ are of $O(1)$ more carefully. Recall that a good reduced common edge graph $\tilde{G}$, by definition, has no edges common to all three paths. Hence $\trip_n(\tilde{G}, c_{AB}, c_{AC}, c_{BC}, c_{ABC}) = 0$ unless $c_{ABC}=0$.

\begin{proposition}\label{prop:termwiselimit}
Let $\tilde{G}$ be a fixed good reduced common edge graph, and let $c_{AB}, c_{AC}$ and  $c_{BC}$ be fixed non-negative integers. Then,  
\begin{equation*}
\begin{split}
&\lim_{n\rightarrow\infty}  n^{-3}t_n(\tilde{G}, c_{AB}, c_{AC}, c_{BC}, 0)\\
&\qquad =\frac{e^{-6}}{\left( \sum_\alpha k_\alpha\right)!} \prod_\alpha\left( {c_\alpha-r_\alpha-1 \choose k_\alpha-r_\alpha-1} 2^{-c_\alpha+k_\alpha}\right),
\end{split}
\end{equation*}
where the sum and product over $\alpha$ go over $AB, AC,$ and $BC$, $k_\alpha=k_\alpha(\tilde{G})$ denotes the number of edges in $\tilde{G}$ whose label state that the edge is common to the two paths in $\alpha$, and $r_\alpha = r_\alpha(\tilde{G})$ denotes the number of edges whose labels state that the edge is common to the two paths in $\alpha$ and that these paths traverse the edge in opposite directions.
\end{proposition}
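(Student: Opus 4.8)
Since $\tilde G$ is good, the common edge graph of any edge-ordered triple of type $(\tilde G,\bar c)$ is a vertex-disjoint union of $k_{AB}$ common segments of $A,B$, $k_{AC}$ of $A,C$ and $k_{BC}$ of $B,C$, and $c_{ABC}=0$. The plan is to first observe the sub-claim that a common segment traversed by its two paths in opposite directions must be a single edge --- for a longer one, $v_0e_1v_1\cdots e_\ell v_\ell$, one path forces $e_1\prec\cdots\prec e_\ell$ while the other forces the reverse. Hence the $r_\alpha$ anti-directed $\alpha$-segments have length one, and the lengths of the remaining $k_\alpha-r_\alpha$ co-directed ones form an arbitrary composition of $c_\alpha-r_\alpha$ into $k_\alpha-r_\alpha$ positive parts; this produces the factor $\prod_\alpha\binom{c_\alpha-r_\alpha-1}{k_\alpha-r_\alpha-1}$. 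It then suffices to prove that, for each fixed such assignment of segment lengths, $n^{-3}$ times the number of edge-ordered triples realising $\tilde G$ with exactly these lengths, divided by $|A\cup B\cup C|!=(3n-3-\sum_\alpha c_\alpha)!$, converges to $e^{-6}\,2^{-\sum_\alpha(c_\alpha-k_\alpha)}/(\sum_\alpha k_\alpha)!$ --- a target which depends on the lengths only through $\sum_\alpha c_\alpha$ and $\sum_\alpha k_\alpha$.

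Fix a length assignment. I would write this restricted contribution as $\sum_{(A,B,C)}\Pr_\preceq(A,B,C\text{ all increasing})$ over triples of Hamiltonian paths realising the prescribed skeleton, where $\Pr_\preceq$, taken over a uniform ordering $\preceq$ of the $m:=|A\cup B\cup C|$ edges, equals $1/m!$ times the number of linear extensions of the union of the three chains (the edges of $A$ in $A$-order, and likewise for $B,C$). The evaluation then has three parts. (i) \emph{Linear-extension count.} Contracting each common segment turns the poset into three chains of length $\sim n$ glued along $\sum_\alpha k_\alpha$ elements with no element on all three; merging the chains one at a time --- a third chain may legitimately slip unique edges in between consecutive edges of an already-contracted block, and this is automatically accounted for by the merge --- expresses the count as a finite product of shuffle binomials $\binom{a+b}{a}$, which I would handle via $\sum_{a,b\ge0}\binom{a+b}{a}x^ay^b=(1-x-y)^{-1}$. (ii) \emph{Skeleton count with a fixed stretch profile.} Grouping the triples by the discrete data of how segments are arranged along and shared between the paths and by the way each path's $\sim n$ unique edges are distributed among the $O(\sum_\alpha k_\alpha)$ gaps between its segments, the number of triples with a given profile is a product of Hamiltonian-path placement counts times the probability that each of the three pairs of paths has \emph{exactly} its prescribed overlap; a Wolfowitz--Kaplansky estimate gives one factor $e^{-2}$ per pair (errors controlled by inclusion--exclusion), hence $e^{-6}$. (iii) \emph{Assembly.} Summing the product of (i) and (ii) over all stretch profiles, the inner sums over stretch sizes again collapse by the $(1-x-y)^{-1}$ identity to central-binomial and $\binom{\sim 2n+\sum_\alpha k_\alpha}{\sum_\alpha k_\alpha}\sim(2n)^{\sum_\alpha k_\alpha}/(\sum_\alpha k_\alpha)!$ factors; feeding the result into Lemma~\ref{lemma:factorialestimates} and Stirling's formula, the factorials cancel down to exactly $n^3\,e^{-6}\,2^{-\sum_\alpha(c_\alpha-k_\alpha)}/(\sum_\alpha k_\alpha)!$. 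As a consistency check one may verify that the single-co-directed-$AB$-segment pattern of length $\ell$ yields $e^{-6}2^{-(\ell-1)}$, with the $2^{-(\ell-1)}$ arising from a ratio of factorials against the $n$-powers of the placement count, and that summing the proposed limit over all good $\tilde G$ and all $\bar c$ reproduces $\E X^3\sim e^3n^3$.

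I expect the main difficulty to be in (ii)--(iii): showing that the three ``exact overlap'' requirements decouple to leading order from the stretch-profile statistics that drive (i), and that the threefold Wolfowitz--Kaplansky constant is genuinely $e^{-6}$ with $o(1)$ errors that are uniform enough to survive summation over the unboundedly many stretch profiles. A related technical point is that contracting the common segments under-counts the admissible orderings, because a unique edge of the third path may land inside a contracted block in the global order; one must check this is precisely the contribution already produced by the merge step in (i) and not the source of a spurious extra constant. The remaining work --- the generating-function coefficient extractions and the factorial cancellations via Lemma~\ref{lemma:factorialestimates} --- is lengthy but routine.
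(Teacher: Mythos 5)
Your proposal follows essentially the same route as the paper: the observation that anti-directed common segments must have length one gives the composition count $\prod_\alpha\binom{c_\alpha-r_\alpha-1}{k_\alpha-r_\alpha-1}$, the interleaving of the three chains is counted by multinomial/shuffle coefficients, the $e^{-6}$ comes from a Brun's-sieve/inclusion--exclusion estimate that the randomly placed vertex sequences create no overlaps beyond the prescribed ones, and the sums over profiles collapse via $\sum_c\binom{c-1}{l-1}z^c=z^l(1-z)^{-l}$. The one point you flag as a possible source of a spurious constant --- unique edges of the third path landing inside a contracted common segment of the other two --- is in fact essential rather than spurious: the paper parametrizes these by explicit counts $m_\alpha$, bounds the resulting terms uniformly in $n$ so that dominated convergence applies, and it is precisely the sum over $m_\alpha$ that produces the factor $2^{-c_\alpha+k_\alpha}$ in the statement.
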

\begin{proof}
Given an edge-ordered triple $A, B, C$, we say that an edge $e\in A\cup B \cup C$ \emph{occurs during} an edge set $E \subseteq A \cup B \cup C$ if there exists $e', e''\in E$ such that $e'\preceq e \preceq e''$. Thus, for an edge-ordered triple $A, B, C$, we can assign numbers $m_{AB}, m_{AC}$ and $m_{BC}$ counting the number of unique edges to $C$, $B$ or $A$ for which there exists a common segment of the other two paths that it occurs during.

We now count the number of edge-ordered triples corresponding to fixed $\tilde{G}$, $c_{AB}, c_{AC}, c_{BC}$ and given values of $m_{AB}, m_{AC}$ and $m_{BC}$. Since each common segment of an edge-ordered triple contains at least one edge, we may assume that $c_\alpha \geq k_\alpha=k_\alpha(\tilde{G})$ for $\alpha=AB, AC, BC$.
\\~\\
\textsc{Step 1:} Choose a common edge graph $G$ (up to isomorphism).
\\~\\
Note that, for each $\alpha\in\{AB, AC, BC\}$, any of the $r_\alpha$ edges that the paths traverse in opposite directions cannot be extended to a path with length more than one as this path would have to be increasing in both directions. Hence, for each such $\alpha$, we need to choose the length of each of the $k_\alpha-r_\alpha$ path segments that the paths traverse in the same direction. This is equal to the number of ways to place $c_\alpha - r_\alpha$ indistinguishable balls into $k_\alpha-r_\alpha$ bins such that each bin contains at least one ball. Hence the number of ways to do this is $\prod_\alpha {c_\alpha-r_\alpha-1 \choose k_\alpha-r_\alpha-1 }$. We remark that we have not yet chosen how to embed $G$ into $K_n$, so we have only chosen $G$ up to isomorphism.
\\~\\
\textsc{Step 2:} Choose the edge list of $A$, $B$ and $C$.
\\~\\
Recall that the edge list of $A, B$ and $C$ is a sequence of length $\abs{A\cup B\cup C}$ where the $i$:th element denotes which paths the $i$:th lowest ordered edge among $A\cup B\cup C$ is contained in. Hence, in this case the possible entries are \textsf{A}, \textsf{B}, \textsf{C}, \textsf{AB}, \textsf{AC} and \textsf{BC}. 

Given $G$, we already know the ordering of the $c_{AB}+c_{AC}+c_{BC}$ edges that are common to two paths, so we can start by placing $c_{AB}$ \textsf{AB}:s, $c_{AC}$ \textsf{AC}:s and $c_{BC}$ \textsf{BC}:s in this order. We first extend the list by inserting the position of all $m_{AB}+m_{AC}+m_{BC}$ edges unique to one of the paths that occur during a common segment of the other two. For a given $\alpha$, there are $c_\alpha-k_\alpha$ pairs of consecutive edges $e$ and $e'$ in $\bigcap_{D \in \alpha} D$ between which we could place some or all of these $m_\alpha$ edges. Hence, for each $\alpha$ the number of ways to do this is equal to the number of ways to place $m_\alpha$ indistinguishable balls into $c_\alpha-k_\alpha$ bins, which is ${m_\alpha + c_\alpha-k_\alpha-1 \choose c_\alpha-k_\alpha-1}$.

It remains to choose the placement of the remaining unique edges of each path. This is equivalent to intertwining four strings: one consisting of $n-1-m_{BC}-c_{AB}-c_{AC}$ \textsf{A}:s, one of $n-1-m_{AC}-c_{AB}-c_{BC}$ \textsf{B}:s, one of $n-1-m_{AB}-c_{AC}-c_{BC}$ \textsf{C}:s, and one of $\sum_\alpha k_\alpha$ \textsc{D}:s, where the \textsf{D}:s are placeholders for the common segments. This can be done in $$\frac{ (3n-3-\sum_\alpha (2c_\alpha +m_\alpha- k_\alpha))!}{(\sum_\alpha k_\alpha)! \prod_\alpha (n-1-m_\alpha- \sum_{\alpha'\neq \alpha} c_{\alpha'})!}$$
ways. However, this will overestimate the number of possible edge lists. For instance, it follows from the definition of $\tilde{G}$ being good that we cannot have two \textsf{D}:s next to each other. The exact condition on such an intertwined string to yield a feasible edge list is a bit involved to describe, but as we shall see, it is sufficient that the string contains at least two \textsf{A}:s, two \textsf{B}:s and two \textsf{C}:s between each pair of \textsf{D}:s.


To estimate the proportion of such intertwined strings, consider a random such string, chosen uniformly at random. Observe that the subsequence consisting of all \textsf{A}:s and \textsf{D}:s is uniformly distributed among the possible strings consisting of $n-1-m_{BC}-c_{AB}-c_{AC}$ \textsf{A}:s and $\sum_\alpha k_\alpha$ \textsf{D}:s. Hence, for any two given positions, $i\neq j$, in this list, the probability that there is a \textsf{D} at both position $i$ and $j$ is $$\frac{(\sum_\alpha k_\alpha)\cdot (\sum_\alpha k_\alpha-1)}{(n-1-m_{BC}-c_{AB}-c_{AC}+\sum_\alpha k_\alpha)\cdot(n-1-m_{BC}-c_{AB}-c_{AC}+\sum_\alpha k_\alpha-1)}.$$ In particular, letting $\bar{m}=(m_{AB}, m_{AC}, m_{BC})$, we can bound this probability by $O_{\tilde{G},\bar{c},\bar{m}}\left(\frac{1}{n^2}\right)$. Hence, the probability that this occurs at any two positions $i$ and $i+1$ or $i$ and $i+2$ is $O_{\tilde{G},\bar{c},\bar{m}}\left(\frac{1}{n}\right)$. Arguing in the same way for the subsequences of all \textsf{B}:s and \textsf{D}:s and all \textsf{C}:s and \textsf{D}:s shows that the proportion of ``bad'' strings is $O_{\tilde{G},\bar{c},\bar{m}}\left(\frac{1}{n}\right)$. Note that this estimate might be big if some $c_\alpha$, $k_\alpha$ or in particular $m_\alpha$ is large compared to $n$, but the point is that this estimate will only be important in the setting when $n\rightarrow\infty$ with all other parameters held constant, so this estimate will still suffice.
\\~\\
\textsc{Step 3:} Choose the vertex sequences of $A$, $B$ and $C$.
\\~\\
Given the choice of common edge graph $G$ (up to isomorphism) and edge list of $A$, $B$ and $C$ we will now choose the vertex sequences $(a_1, a_2, \dots, a_n), (b_1, b_2, \dots, b_n)$ and $(c_1, c_2, \dots, c_n)$ denoting in which order each of $A$, $B$ and $C$ will visit the vertices in $K_n$. In doing so, we start by choosing an embedding of $G$ into $K_n$. This can be done in $$\frac{n!}{(n-\sum_\alpha (c_\alpha + k_\alpha))!} =\left(1+o_{\tilde{G},\bar{c}}(1)\right) n^{\sum_\alpha c_\alpha+k_\alpha}$$ ways.

Since we have already chosen the edge list of $A$, $B$ and $C$, we know how many unique edges each path has before its first common segment, between each two common segments, and after its last common segment with one of the other paths. Hence, for any common segment in $G$ between, say, $A$ and $B$, we can identify the corresponding vertex sequences $a_i,  a_{i+1}, \dots, a_{i+l}$ and $b_j, b_{j+1}, \dots, b_{j+l}$ of the two paths. Hence, the embedding of $G$ determines the vertices in $c_{AB}+c_{AC}+k_{AB}+k_{AC}$ positions of the vertex sequence of $A$, and similarly $c_{AB}+c_{BC}+k_{AB}+k_{BC}$ positions in $B$ and $c_{AC}+c_{BC}+k_{AC}+k_{BC}$ ones in $C$. Second, we assign the remaining parts of the vertex sequences of $A$, $B$ and $C$ such that the paths are Hamiltonian, which can be done in $$\prod_\alpha\left( n-\sum_{\alpha'\neq \alpha}(c_{\alpha'}+k_{\alpha'})\right)!=\left(1+o_{\tilde{G},\bar{c}}(1)\right) n!^3 n^{-2\sum_\alpha (c_\alpha + k_\alpha)}$$ ways.

Again, not all of these assignments are allowed as the chosen vertex sequences for $A$, $B$ and $C$ may give rise to more common edges than those in $G$.

\begin{claim}\label{claim:brun}
Assume the edge list of $A$, $B$ and $C$ chosen in \textsc{Step 2} contains at least two \textsf{A}:s, two \textsf{B}:s and two \textsf{C}:s between every pair of common segments. Then, the proportion of assignments of vertex sequences as described in \textsc{Step 3} for which $A$, $B$ and $C$ have no more common edges than those in $G$ is $e^{-6} + o_{\tilde{G}, \bar{c}}(1)$.
\end{claim}
Let us postpone the proof of this claim for now. Fix $\tilde{G}$, $c_{AB}$, $c_{AC}$ and $c_{BC}$. Let $t_n(\tilde{G}, \bar{c}, \bar{m})$ denote the contribution to $\trip_n(\tilde{G}, c_{AB}, c_{AC}, c_{BC}, 0)$ from given values of $\bar{m}=(m_{AB}, m_{AC}, m_{BC})$ as above. By imagining that the assignments in \textsc{Step 2} and \textsc{Step 3} never fail, we get the upper bound
\begin{equation}\label{eq:neverfail}
\begin{split}
&n^{-3}\trip_n(\tilde{G}, \bar{c}, \bar{m})\\
&\leq \frac{\left(1+o_{\tilde{G}, \bar{c}}(1)\right) n^{-3}}{\left( \sum_\alpha k_\alpha\right)!} \left(\prod_\alpha {c_\alpha-r_\alpha-1 \choose k_\alpha-r_\alpha-1} {m_\alpha+c_\alpha-k_\alpha-1 \choose c_\alpha-k_\alpha-1}\right)\cdot \\
&\qquad \cdot \frac{(3n-3-(\sum_\alpha 2c_\alpha+m_\alpha-k_\alpha))!n!^3 n^{-\sum_\alpha (c_\alpha+k_\alpha)}  }{(3n-3- \sum_\alpha c_\alpha)! \prod_\alpha (n-1-m_\alpha-\sum_{\alpha'\neq \alpha} c_{\alpha'})! }.
\end{split}
\end{equation}
Using the bounds \begin{equation*}
\frac{n!}{(n-1-m_\alpha-\sum_{\alpha'\neq\alpha}c_\alpha')!}\leq n^{1+m_\alpha+\sum_{\alpha'\neq\alpha}c_{\alpha'}}
\end{equation*}
 and 
\begin{equation*}
\frac{(3n-3-(\sum_\alpha 2c_\alpha+m_\alpha-k_\alpha))!}{(3n-3- \sum_\alpha c_\alpha)!} \leq \left(\frac{e}{3n(1-o_{\bar{c}}(1))}\right)^{\sum_\alpha c_\alpha+m_\alpha-k_\alpha},
\end{equation*}
where the latter follows from the second part of Lemma \ref{lemma:factorialestimates}, the right-hand side of \eqref{eq:neverfail} simplifies to
\begin{equation}\label{eq:domination}
\frac{\left(1+o_{\tilde{G}, \bar{c}}(1)\right)}{\left( \sum_\alpha k_\alpha\right)!} \left( \prod_\alpha {c_\alpha-r_\alpha-1 \choose k_\alpha-r_\alpha-1} {m_\alpha+c_\alpha-k_\alpha-1 \choose c_\alpha-k_\alpha-1} \left( \frac{e}{3} + o_{\bar{c}}(1)\right)^{c_\alpha+m_\alpha-k_\alpha}\right).
\end{equation}
Note that this expression does not depend on $n$ (besides the error terms). Assuming $n$ is sufficiently large such that, say, $1+o_{\tilde{G}, \bar{c}}(1) \leq 2$ and $\frac{e}{3}+o_{\bar{c}}(1) < 0.95$ this gives a upper bound on $n^{-3} t_n(\tilde{G}, \bar{c}, \bar{m})$ not depending on $n$. Moreover, the sum of \eqref{eq:domination} over all triples of non-negative integers $m_{AB}, m_{AC}, m_{BC}$ is convergent as
$$\sum_{m_\alpha=0}^\infty {m_\alpha+c_\alpha-k_\alpha-1 \choose c_\alpha-k_\alpha-1}0.95^{m_\alpha+c_\alpha-k_\alpha} = 20^{c_\alpha-k_\alpha}<\infty,$$
where we used $\frac{z^l}{(1-z)^l}=\sum_c {c-1\choose l-1} z^c$ with $z=0.95$, $l=c_\alpha-k_\alpha$ and $c=m_\alpha$. Hence, by dominated convergence and the above argument for estimating the proportion of successful assignments in \textsc{Step 2} and \textsc{Step 3}, we see that 
\begin{equation*}
\begin{split}
&\lim_{n\rightarrow\infty}  n^{-3}t_n(\tilde{G}, c_{AB}, c_{AC}, c_{BC}, 0)\\
&= \sum_{\bar{m}} \lim_{n\rightarrow\infty} n^{-3}t_n(\tilde{G}, \bar{c}, \bar{m})\\
&= \frac{e^{-6}}{\left( \sum_\alpha k_\alpha\right)!} \prod_\alpha\left( {c_\alpha-r_\alpha-1 \choose k_\alpha-r_\alpha-1} \sum_{m_\alpha}
 {m_\alpha+c_\alpha-k_\alpha-1 \choose c_\alpha-k_\alpha-1} 3^{-m_\alpha-c_\alpha+k_\alpha} \right)\\
&=\frac{e^{-6}}{\left( \sum_\alpha k_\alpha\right)!} \prod_\alpha\left( {c_\alpha-r_\alpha-1 \choose k_\alpha-r_\alpha-1} 2^{-c_\alpha+k_\alpha}\right),
\end{split}
\end{equation*}
where in the last step we again used $\frac{z^l}{(1-z)^l}=\sum_c {c-1\choose l-1} z^c$ with $z=\frac{1}{3}$, $l=c_\alpha-k_\alpha$ and $c=m_\alpha+c_\alpha-k_\alpha$.
\end{proof}

\begin{proof}[Proof of Claim \ref{claim:brun}.]
Let $I_A$, $I_B$ and $I_C$ denote the indices in the vertex sequences of $A$, $B$ and $C$ respectively that are assigned by choosing the embedding of $G$ into $K_n$. We note that $I_A$ consists of $c_{AB}+c_{AC}+k_{AB}+k_{AC}$ elements contained in $k_{AB}+k_{AC}$ intervals of consecutive indices corresponding to the common segments of $A$ and one of $B$ and $C$. Moreover, as the edge list of $A, B$ and $C$ contains at least two edges unique to $A$ between each pair of common segments between $A$ and some of the other paths, there is at least one index between each pair of such intervals that is not contained in $I_A$. For any edge in $G$ that is labelled to be common to $A$ and, say, $B$, the embedding of $G$ into $K_n$ will assign the corresponding indices $i, i+1\in I_A$ and $j, j+1\in I_B$ vertices $a_i, a_{i+1}, b_j, b_{j+1}$ such that $\{a_i, a_{i+1}\} = \{b_j, b_{j+1}\}$. The analogous statements hold for $B$ and $C$.

Suppose we pick the assignments of the vertex sequences of $A$, $B$ and $C$ according to \textsc{Step 3} in the proof of Proposition \ref{prop:termwiselimit} uniformly at random. We want to show that the probability that all edges common to two of $A$, $B$ and $C$ are of this form is $e^{-6}+o_{\tilde{G},\bar{c}}\left(1\right)$. First, let $i, i+1\in I_A$ be adjacent indices in $A$ corresponding to a common edge between $A$ and $B$. Because of the structure of $G$, the embedding of $G$ will not assign the values $a_i$ or $a_{i+1}$ to any $c_j$, $j\in I_C$. Hence the vertices $a_i$ and $a_{i+1}$ will be placed in $C$ at some indices outside $I_C$. Hence, the probability that these are placed in adjacent positions is at most $$\frac{2}{n-c_{AC}-c_{BC}-k_{AC}-k_{BC}-1} = O_{\tilde{G},\bar{c}}\left(\frac{1}{n}\right).$$ Similarly, if $i\in I_A$ and $i+1\not\in I_A$, then the embedding of $G$ does not assign a value to $a_{i+1}$. Conditioning on the embedding and the assignments of vertex sequences of $B$ and $C$, there are at most $4$ assignments of $a_{i+1}$ such that $\{a_i, a_{i+1}\}$ is contained in one of $B$ and $C$. Hence, the probability that $a_{i+1}$ is chosen in this way is at most $$\frac{4}{n-c_{AB}-c_{AC}-k_{AB}-k_{AC}} = O_{\tilde{G},\bar{c}}\left(\frac{1}{n}\right).$$ By repeating this argument for any permutation of $A, B$ and $C$ and any edge in or adjacent to a common segment, it follows from the union bound that the probability of unwanted overlapping edges in this way is $O_{\tilde{G},\bar{c}}\left(\frac{1}{n}\right)$.

It remains to estimate the probability that $\{a_i, a_{i+1}\}$, $\{b_j, b_{j+1}\}$ and $\{c_k, c_{k+1}\}$ are all distinct for $i, i+1\not\in I_A$, $j, j+1 \not\in I_B$ and $k, k+1\not\in I_C$. Here we make use of Brun's sieve, see e.g. Theorem 8.3.1 in \cite{AS08}: Let $\xi=\xi_n$ be a sequence of non-negative integer-valued random variables. Suppose there exists a constant $\mu$ such that $\E \xi_n \rightarrow \mu$ and moreover $\E {\xi_n \choose r}\rightarrow \frac{\mu^r}{r!}$ for all positive integers $r$ as $n\rightarrow\infty$. Then $\Pr\left(\xi_n=r\right) \rightarrow \frac{\mu^r}{r!}e^{-\mu}$ for any $r=0, 1, \dots$.



Condition on the embedding of $G$ and the vertex sequence of $A$. Let $E$ be the set of all arcs $(a_i, a_{i+1})$ and $(a_{i+1}, a_i)$ for $i, i+1\not\in I_A$, and for any $e=(v, w) \in E$, let $F_e$ be the event that $b_j=v$ and $b_{j+1}=w$ for some $j, j+1\not\in I_B$, that is, $F_e$ is the event that $v$ is immediately followed by $w$ in the vertex sequence of $B$ where the corresponding indices are not contained in $I_B$. Let $\xi = \sum_{e\in E} \mathbbm{1}_{F_e}$. Then $\E {\xi \choose r} = \sum_{\abs{\mathcal{E}}=r} \Pr\left(\bigcap_{F\in\mathcal{E}} F\right)$ where the sum goes over all unordered $r$-tuples of events $F_e$ as above.

We say that an $r$-tuple $\mathcal{E}$ is \emph{compatible} if the corresponding arcs form directed paths in $K_n$ (as opposed to cycles, or some vertex having in- or out-degree more than $1$) and no arc has an end-point on a vertex that the embedding of $G$ has assigned to a common segment of $B$ and $C$. Otherwise, $\mathcal{E}$ is \emph{incompatible}. It is clear that if $\mathcal{E}$ is incompatible, then $\Pr\left(\bigcap_{F\in\mathcal{E}} F\vert\text{assignment of $G$ and $A$}\right)=0$. For a given compatible $r$-tuple $\mathcal{E}$, we may interpret the event $\bigcap_{F\in\mathcal{E}} F$ as that $B$ contains some number $p$ vertex-disjoint directed paths $$(v_0^1, v_1^1, \dots v_{r_1}^1), (v_0^2, v_1^2, \dots v_{r_2}^2), \dots, (v_0^p, v_1^p, \dots v_{r_p}^p)$$ with $\sum_{i=1}^p r_i=r$ where the indices of these paths in $B$ are contained in the complement of $I_B$.

For fixed $c_{AB}, c_{AC}$ and $c_{BC}$ it is straightforward to see that the number of compatible $r$-tuples is $\left(1+o_{\tilde{G},\bar{c},r}(1)\right) \frac{(2n)^r}{r!}$: For the upper bound, the total number of $r$-tuples is $${2\left(n-1-O(c_{AB}+c_{AC}+k_{AB}+k_{AC})\right) \choose r},$$ and for the lower bound, we note that a sufficient condition for an $r$-tuple to be compatible is that no pair of corresponding arcs are adjacent and no arc has an end-point on a common segment of $B$ and $C$, which is true for all but a $o_{\tilde{G},\bar{c},r}(1)$-fraction of all $r$-tuples. Moreover, for any compatible $\mathcal{E}$, $$\Pr\left(\bigcap_{F\in\mathcal{E}} F\middle\vert \text{assignment of $G$ and $A$}\right)=\left(1+o_{\tilde{G},\bar{c},r}(1)\right) n^{-r}$$ as there are $\left(1-o_{\tilde{G},\bar{c},r}(1)\right)n^p$ ways to choose the positions of $p$ intervals of lengths $r_1, r_2, \dots r_p$ in the complement of $I_B$ (morally, $\abs{I_B}$ and $p \leq r$ are small compared to $n$ so most choices of intervals will be disjoint both $I_B$ and each other), and the probability that these intervals are assigned the sequences as above is $$\frac{(n-c_{AB}-c_{BC}-k_{AB}-k_{BC}-r-p)!}{(n-c_{AB}-c_{BC}-k_{AB}-k_{BC})!}=\left(1-o_{\tilde{G},\bar{c},r}(1)\right)n^{-p-r}.$$
Hence, by Brun's sieve, the probability that $\xi=0$, that is, that there are no ``extra common edges'' between $A$ and $B$, is $e^{-2}+o_{\tilde{G},\bar{c}}(1)$.

Conditioning on the embedding of $G$, and the vertex sequences of $A$ and $B$ we can repeat this argument for overlapping edges between $A\cup B$ and $C$. Here we define $E$ as the sets of arcs $(a_i, a_{i+1}), (a_{i+1}, a_i), (b_j, b_{j+1}), (b_{j+1}, b_j)$ for $i, i+1\not\in I_A$ and $j, j+1 \not\in I_B$, with $F_e$ for each $e\in E$ defined as before. For an $r$-tuple $\mathcal{E}$ of such events, we need to add the condition that no arc has an end-point on a vertex that the embedding of $G$ assigns to a common segment of $A$ and $C$ in order for $\mathcal{E}$ to be compatible. There are now $\left(1+o_{\tilde{G},\bar{c},r}(1)\right) \frac{ (4n)^4}{r!}$ possible $r$-tuples $\mathcal{E}$ of events, and, in order for an $r$-tuple to be compatible, it is still sufficient that it avoids pairs of adjacent arcs and arcs with an end-point on a common segment of $A$ and $C$ or $B$ and $C$, which holds true for all but a $o_{\tilde{G},\bar{c},r}(1)$-fraction of all $r$-tuples. We again get that a compatible $r$-tuple of events correspond to that certain sequences of vertices should appear consecutively in $C$, for which we get the same estimate for $\Pr(\bigcap_{F\in\mathcal{E}}F\vert\text{assignment of $G$, $A$ and $B$})$ as before. Again by Brun's sieve, we get that the probability that $\xi=0$ is $e^{-4}+o_{\tilde{G},\bar{c}}(1)$, as desired.
\end{proof}

\begin{proof}[Proof of Proposition \ref{prop:thirdmomentestimates}] By Proposition \ref{prop:EX3On3} we have for any $M>0$ that $n^{-3}\E X^3$ lies beween $\sum_{\tilde{G}\text{ good}}\sum_{c_{AB}+c_{AC}+c_{BC} \leq M} n^{-3} t_n(\tilde{G}, c_{AB}, c_{AC}, c_{BC}, 0)$ and this sum plus $O\left(\frac{1}{n}+e^{-\Omega(M)}\right)$. Taking the limit as $n\rightarrow\infty$ for a fixed $M$ and noting that only a finite number of $\tilde{G}$ contribute to the sum (in order for $\tilde{G}$ to contribute to this sum, it cannot have more than $M$ edges), meaning that we can move the limit inside the sums, gives us
\begin{align*}
&\sum_{\tilde{G}\text{ good}}\sum_{c_{AB}+c_{AC}+c_{BC} \leq M} \lim_{n\rightarrow\infty}n^{-3} t_n(\tilde{G}, c_{AB}, c_{AC}, c_{BC}, 0)\\
&\qquad\leq \liminf_{n\rightarrow\infty}n^{-3} \E X^3 \leq \limsup_{n\rightarrow\infty}n^{-3} \E X^3\\
&\qquad\leq \sum_{\tilde{G}\text{ good}}\sum_{c_{AB}+c_{AC}+c_{BC} \leq M} \lim_{n\rightarrow\infty} n^{-3} t_n(\tilde{G}, c_{AB}, c_{AC}, c_{BC}, 0) + O\left(e^{-\Omega(M)}\right).
\end{align*}
Hence, letting $M\rightarrow\infty$, we get
\begin{equation*}
\lim_{n\rightarrow\infty} n^{-3} \E X^3 = \sum_{\tilde{G}\text{ good}}\sum_{\bar{c}} \lim_{n\rightarrow\infty} n^{-3} t_n(\tilde{G}, \bar{c}).
\end{equation*}
Applying Proposition \ref{prop:termwiselimit} it follows that
\begin{equation*}
\begin{split}
&\lim_{n\rightarrow\infty} n^{-3}\E X^3 = \sum_{\tilde{G}\text{ good}} \frac{e^{-6}}{\left(\sum_\alpha k_\alpha(\tilde{G})\right)!}\cdot\\
&\qquad \cdot \prod_\alpha\left( \sum_{c_\alpha=0}^\infty {c_\alpha-r_\alpha(\tilde{G})-1 \choose k_\alpha(\tilde{G})-r_\alpha(\tilde{G})-1} 2^{-c_\alpha+r_\alpha(\tilde{G})} \cdot 2^{k_\alpha(\tilde{G})-r_\alpha(\tilde{G})}\right)\\
&\qquad =\sum_{\tilde{G}\text{ good}} \frac{e^{-6} }{\left(\sum_\alpha k_\alpha(\tilde{G})\right)!}\prod_\alpha 2^{k_\alpha(\tilde{G})-r_\alpha(\tilde{G})}.
\end{split}
\end{equation*}
Yet again we used $\frac{z^l}{(1-z)^l}=\sum_c {c-1\choose l-1} z^c$, here with $z=\frac{1}{2}$, $l=k_\alpha\left(\tilde{G}-r_\alpha(\tilde{G})\right)$, $c=c_\alpha-r_\alpha(\tilde{G})$. Note that for given $k_{AB}, k_{AC}, k_{BC}$ and $r_{AB}, r_{AC}, r_{BC}$ the number of corresponding good reduced common edge graphs is
$ { k_{AB}+k_{AC}+k_{BC} \choose k_{AB}, k_{AC}, k_{BC}}\prod_\alpha {k_\alpha \choose r_\alpha}.
$
Hence
\begin{equation*}
\begin{split}
\lim_{n\rightarrow\infty} n^{-3} \E X^3 &= e^{-6} \prod_{\alpha} \sum_{k_\alpha=0}^\infty \frac{ 2^{k_\alpha}}{k_\alpha!} \sum_{r_\alpha=0}^{k_\alpha} {k_\alpha \choose r_\alpha} 2^{-r_\alpha}\\
&= e^{-6} \prod_{\alpha} \sum_{k_\alpha=0}^\infty \frac{ 3^{k_\alpha}}{k_\alpha!}\\
&= e^{3}.
\end{split}
\end{equation*}
The estimates for \eqref{eq:EX3CdisjointA} and \eqref{eq:EX3BCdisjointA} are done analogously, but only count the contributions from terms where $k_{AC}=0$ and $k_{AC}=k_{BC}=0$ respectively.
\end{proof}

\section*{Acknowledgements}
Most of this work was done at the Department of Mathematical Sciences at Chalmers University of Technology and University of Gothenburg as part of the PhD studies of the author. This was supported by a grant from the Swedish Research Council. The author thanks Klas Markström for suggesting the problem, and his supervisor, Peter Hegarty, as well as the anonymous referees for their thorough reading of the manuscript and many valuable comments and suggestions.

\begin{bibdiv}
\begin{biblist}
\bib{A03}{article}{
   author={Alon, N.},
   title={Problems and results in extremal combinatorics. I},
   note={EuroComb'01 (Barcelona)},
   journal={Discrete Math.},
   volume={273},
   date={2003},
   number={1-3},
   pages={31--53},
   issn={0012-365X},
   review={\MR{2025940}},
   doi={10.1016/S0012-365X(03)00227-9},
}

\bib{AS08}{book}{
   author={Alon, N.},
   author={Spencer, J.},
   title={The probabilistic method},
   series={Wiley-Interscience Series in Discrete Mathematics and
   Optimization},
   edition={3},
   note={With an appendix on the life and work of Paul Erd\H os},
   publisher={John Wiley \& Sons, Inc., Hoboken, NJ},
   date={2008},
   pages={xviii+352},
   isbn={978-0-470-17020-5},
   review={\MR{2437651}},
   doi={10.1002/9780470277331},
}

\bib{AGK15+}{article}{
author={Arratia, R},
author={Goldstein, L},
author={Kochman, F},
title={Size bias for one and all},
year={2015+},
note={Available at \href{http://arxiv.org/abs/1308.2729}{http://arxiv.org/abs/1308.2729}}
}

\bib{B78}{book}{
   author={Bollob\'as, B\'ela},
   title={Extremal graph theory},
   series={London Mathematical Society Monographs},
   volume={11},
   publisher={Academic Press, Inc. [Harcourt Brace Jovanovich, Publishers],
   London-New York},
   date={1978},
   pages={xx+488},
   isbn={0-12-111750-2},
   review={\MR{506522}},
}

\bib{CCS84}{article}{
   author={Calderbank, A. R.},
   author={Chung, F. R. K.},
   author={Sturtevant, D. G.},
   title={Increasing sequences with nonzero block sums and increasing paths
   in edge-ordered graphs},
   journal={Discrete Math.},
   volume={50},
   date={1984},
   number={1},
   pages={15--28},
   issn={0012-365X},
   review={\MR{747708}},
   doi={10.1016/0012-365X(84)90031-1},
}

\bib{CK71}{article}{
   author={Chv{\'a}tal, V.},
   author={Koml{\'o}s, J.},
   title={Some combinatorial theorems on monotonicity},
   journal={Canad. Math. Bull.},
   volume={14},
   date={1971},
   pages={151--157},
   issn={0008-4395},
   review={\MR{0337676}},
}

\bib{G07}{article}{
   author={Gon\c{c}alves, D.},
   title={Caterpillar arboricity of planar graphs},
   journal={Discrete Math.},
   volume={307},
   date={2007},
   number={16},
   pages={2112--2121},
   issn={0012-365X},
   review={\MR{2326172}},
   doi={10.1016/j.disc.2005.12.055},
}

\bib{GK73}{article}{
   author={Graham, R. L.},
   author={Kleitman, D. J.},
   title={Increasing paths in edge ordered graphs},
   note={Collection of articles dedicated to the memory of Alfr\'ed R\'enyi,
   II},
   journal={Period. Math. Hungar.},
   volume={3},
   date={1973},
   pages={141--148},
   issn={0031-5303},
   review={\MR{0327568}},
}
\bib{K45}{article}{
   author={Kaplansky, I.},
   title={The asymptotic distribution of runs of consecutive elements},
   journal={Ann. Math. Statistics},
   volume={16},
   date={1945},
   pages={200--203},
   issn={0003-4851},
   review={\MR{0013847}},
}

\bib{KS10}{article}{
   author={Katreni{\v{c}}, J.},
   author={Semani{\v{s}}in, G.},
   title={Finding monotone paths in edge-ordered graphs},
   journal={Discrete Appl. Math.},
   volume={158},
   date={2010},
   number={15},
   pages={1624--1632},
   issn={0166-218X},
   review={\MR{2671540}},
   doi={10.1016/j.dam.2010.05.018},
}

\bib{LL16}{article}{
year={2016},
journal={Random Structures \& Algorithms},
title={Hamiltonian increasing paths in random edge orderings},
author={Lavrov, M.},
author={Loh, P.},
volume={48},
number={3},
issn = {1098-2418},
url = {http://dx.doi.org/10.1002/rsa.20592},
doi = {10.1002/rsa.20592},
pages = {588--611},
}

\bib{M15+}{article}{
author={Milans, K},
title={Monotone Paths in Dense Edge-Ordered Graphs},
year={2015+},
note={Available at \href{http://arxiv.org/abs/1509.02143}{http://arxiv.org/abs/1509.02143}}
}

\bib{MBCFH05}{article}{
   author={Mynhardt, C. M.},
   author={Burger, A. P.},
   author={Clark, T. C.},
   author={Falvai, B.},
   author={Henderson, N. D. R.},
   title={Altitude of regular graphs with girth at least five},
   journal={Discrete Math.},
   volume={294},
   date={2005},
   number={3},
   pages={241--257},
   issn={0012-365X},
   review={\MR{2137566}},
   doi={10.1016/j.disc.2005.02.007},
}

\bib{RSY01}{article}{
   author={Roditty, Y.},
   author={Shoham, B.},
   author={Yuster, R.},
   title={Monotone paths in edge-ordered sparse graphs},
   journal={Discrete Math.},
   volume={226},
   date={2001},
   number={1-3},
   pages={411--417},
   issn={0012-365X},
   review={\MR{1802612}},
   doi={10.1016/S0012-365X(00)00174-6},
}

\bib{R11}{article}{
author = {Ross, Nathan},
doi = {10.1214/11-PS182},
fjournal = {Probability Surveys},
journal = {Probab. Surveys},
pages = {210--293},
publisher = {The Institute of Mathematical Statistics and the Bernoulli Society},
title = {Fundamentals of Stein’s method},
url = {https://doi.org/10.1214/11-PS182},
volume = {8},
year = {2011}
}

\bib{R73}{article}{
note={Master's thesis, Charles University (1973)},
author={Rödl, V.},
}

\bib{SMPRT16}{article}{
   author={De Silva, J.},
   author={Molla, T.},
   author={Pfender, F.},
   author={Retter, T.},
   author={Tait, M.},
   title={Increasing paths in edge-ordered graphs: the hypercube and random
   graph},
   journal={Electron. J. Combin.},
   volume={23},
   date={2016},
   number={2},
   pages={Paper 2.15, 9},
   issn={1077-8926},
   review={\MR{3512637}},
}

\bib{ST05}{article}{
   author={Stoyanov, J.},
   author={Tolmatz, L.},
   title={Method for constructing Stieltjes classes for M-indeterminate probability distributions},
   journal={Appl. Math. Comput.},
   volume={165},
   date={2005},
   number={3},
   pages={669--685},
   issn={0096-3003},
   review={\MR{2138909}},
   doi={10.1016/j.amc.2004.04.035},
}

\bib{W44}{article}{
   author={Wolfowitz, J.},
   title={Note on runs of consecutive elements},
   journal={Ann. Math. Statistics},
   volume={15},
   date={1944},
   pages={97--98},
   issn={0003-4851},
   review={\MR{0010341}},
}

\end{biblist}
\end{bibdiv}

\end{document}